\title[Shadows of acyclic 4-manifolds with sphere boundary]
{Shadows of acyclic 4-manifolds with sphere boundary}
\author{Yuya Koda}
\thanks{The first author is supported in part by JSPS KAKENHI Grant
 Numbers 15H03620, 17K05254, 17H06463, and JST CREST Grant Number JPMJCR17J4. 
The second author is supported by JSPS KAKENHI Grant Number 18H05827.}
\address{
Department of Mathematics \newline
\indent Hiroshima University, 1-3-1 Kagamiyama, Higashi-Hiroshima, 739-8526, Japan}
\email{ykoda@hiroshima-u.ac.jp}
\author{Hironobu Naoe}
\address{
Department of Mathematics
\newline
\indent Chuo University, 1-13-27 Kasuga Bunkyo-ku, Tokyo, 112-8551, Japan}
\email{naoe@math.chuo-u.ac.jp}
\theoremstyle{plain}
\newtheorem*{theorem*}{Theorem}
\newtheorem*{lemma*} {Lemma}
\newtheorem*{corollary*} {Corollary}
\newtheorem*{proposition*}{Proposition}
\newtheorem*{conjecture*}{Conjecture}
\newtheorem{theorem}{Theorem}[section]
\newtheorem{lemma}[theorem]{Lemma}
\theoremstyle{remark}
\newtheorem*{definition}{Definition}
\newtheorem*{claim*}{Claim}
\newtheorem*{remark}{Remark}
\newtheorem*{question}{Question}
\theoremstyle{definition}
\newtheoremstyle{citing}% name
  {}%      Space above, empty = `usual value'
  {}%      Space below
  {\itshape}% Body font
  {}%         Indent amount (empty = no indent, \parindent = para indent)
  {\bfseries}% Thm head font
  {.}%        Punctuation after thm head
  {.5em}%     Space after thm head: " " = normal interword space;
\theoremstyle{citing}
\newcommand{\Integer}{\mathbb{Z}}
\newcommand{\Real}{\mathbb{R}}
\newcommand{\Complex}{\mathbb{C}}
\newcommand{\Nbd}{\operatorname{Nbd}}
\newcommand{\Int}{\operatorname{Int}}
\newcommand{\gl}{\mathrm{gl}}
\begin{document}

\maketitle

\begin{abstract}
In terms of Turaev's shadows, we provide a sufficient condition for 
a compact, smooth, acyclic $4$-manifold with boundary the $3$-sphere 
to be diffeomorphic to the standard $4$-ball. 
As a consequence, we prove that if a compact, smooth, acyclic $4$-manifold with boundary the $3$-sphere 
has shadow-complexity at most $2$, then it is diffeomorphic to the standard $4$-ball. 
\end{abstract}

\vspace{1em}

\begin{small}
\hspace{2em}  \textbf{2010 Mathematics Subject Classification}: 
57N13; 57M20, 57R55, 57R65

%57N13 Topology of E4, 4-manifolds
%57M20 Two-dimensional complexes
%57R55 Differentiable structures
%57R65 Surgery and handlebodies

\hspace{2em} 
\textbf{Keywords}:
4-manifold, shadow, differentiable structure, handlebody, polyhedron.  
\end{small}

\section*{Introduction}

In \cite{Tur92, Tur94}, Turaev introduced the notion of {\it shadow} 
as a combinatorial tool to present smooth 3- and 4-manifolds. 
A shadow of a 4-manifold $M$ with boundary is a simple polyhedron 
$X$ properly embedded in $M$ so that $M$ collapses onto $X$, 
and $X$ is locally flat in $M$. 
The polyhedron $X$ is also called a shadow of the 3-manifold $\partial M$. 
By counting the minimum number of vertices of a shadow of a given 4- or 3-manifold, 
we get a (non-negative) integer-valued invariant called 
the {\it shadow-complexity}. 

In the $3$-manifold topology, shadows are used to study quantum invariants, 
see e.g. \cite{Tur92, Tur94, Bur97, Shu97, Thu02, CM17}. 
Moreover, it was revealed that the shadow-complexity of a 3-manifold $M$ 
is strongly related to the Gromov norm and the minimum number of 
codimension-$2$ singular fibers of a stable map $M \to \Real^2$, see \cite{CT08, CFMP07, IK17}.  

In the dimension $4$, shadows allow us to classify $4$-manifolds 
experimentally according to increasing complexity. 
Costantino \cite{Cos06a} studied closed 4-manifolds of shadow-complexity $0$ or $1$ in
a special case. 
Here, a shadow of a closed 4-manifold is a shadow of the union of $0$, $1$, and $2$ handles of 
its handle decomposition. 
In \cite{Mar11} Martelli gave a complete classification of the closed 4-manifolds
of shadow-complexity $0$. 
A very interesting consequence of this paper is that 
a simply connected closed 4-manifold has complexity zero if and only
if it is a connected sum of copies of the standard 
$S^4$, $S^2 \times S^2$, $\Complex P^2$, and $\overline{\Complex P^2}$. 
This implies in particular that the shadow-complexity detects the exotic structures on those manifolds. 
It is also classified the closed 4-manifolds of shadow-complexity $1$ in 
\cite{KMN18}. 
For the other studies of 4-manifolds using shadows 
see e.g. \cite{Cos05a, Cos06b, Cos08, Cos05b, Mar05, Nao18, Nao, IN}. 

\vspace{1em}

In the present paper, we consider the following naive question. 
\begin{question}
Let $M$ be an acyclic $4$-dimensional $2$-handlebody with boundary the $3$-sphere. 
Then is $M$ diffeomorphic to the standard $4$-ball?
\end{question}
\noindent Here, recall that a compact, oriented 4-manifold is called a $2$-handlebody if 
it is made of finitely many handles of index at most $2$. 
Note that the manifold $M$ in the above question is at least \textit{homeomorphic} to the $4$-ball. 
Indeed, it is easy to see that $M$ is simply connected, thus, $M$ is homeomorphic to the $4$-ball by 
Freedman's classification theorem \cite{Fre82}. 
A negative answer to the above question 
implies the existence of an exotic $4$-sphere. 
The following theorem gives an affirmative answer to the question 
when the (special) shadow-complexity of $M$ is very small. 
\begin{theorem}
\label{thm:Costantino and Naoe}
\begin{enumerate}
\item
\label{thm:acyclic special polyhedron of complexity 0}
$(${\rm Costantino} \cite{Cos06a}$)$ 
Every acyclic $4$-manifold of special shadow-complexity $0$ or $1$ 
with boundary the $3$-sphere is diffeomorphic to the standard $4$-ball. 
Here, for the definition of the special shadow-complexity, see Section $\ref{sec:Shadows}$. 
\item
\label{thm:acyclic simple polyhedron of complexity 0}
$(${\rm Naoe} \cite{Nao17}$)$ 
Every acyclic $4$-manifold of shadow-complexity $0$ is 
diffeomorphic to the standard $4$-ball. 
\end{enumerate}
\end{theorem}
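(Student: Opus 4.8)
The plan is to reduce the problem to a finite, explicit combinatorial enumeration and then to trivialize each surviving candidate by hand. First I would recall that since $M$ collapses onto its shadow $X$, the two are homotopy equivalent, so $X$ is acyclic; moreover $M$ is recovered up to diffeomorphism from $X$ together with its gleam, and both $\partial M$ and a handle decomposition of $M$ can be read off from this combinatorial data. Because $M$ is a $2$-handlebody, it carries a handle decomposition with a single $0$-handle together with $h_1$ one-handles and $h_2$ two-handles, and acyclicity forces $\chi(M)=1$, hence $h_1=h_2=:n$, while the attaching data of the two-handles yields a balanced presentation $\langle x_1,\dots,x_n \mid r_1,\dots,r_n\rangle$ of $\pi_1(M)=1$. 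Thus the goal becomes the following: show that the particular balanced presentation produced by a low-complexity acyclic shadow can be reduced, geometrically, to the empty presentation, which is precisely the assertion that all handles cancel and $M\cong B^4$.

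Next I would carry out the enumeration. The shadow-complexity counts the true vertices of $X$, so complexity $0$ means the singular set $S(X)$ is a disjoint union of circles, and complexity $1$ means $S(X)$ is a four-valent graph with a single vertex. Using the local models of a simple polyhedron together with an Euler-characteristic computation and the acyclicity of $X$, I would list the finitely many possible region structures; in the special case the regions are forced to be disks, which cuts the list down dramatically and is what lets Costantino push the analysis as far as complexity $1$. For each surviving polyhedron I would then impose the two remaining constraints: that a compatible gleam exists and that the reconstructed boundary $\partial M$ is the $3$-sphere. Computing $\pi_1(\partial M)$, or equivalently reading off the associated Kirby diagram, eliminates most gleams and leaves only a short finite list of candidate pairs $(X,\mathrm{gleam})$.

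Finally, for each remaining candidate I would exhibit an explicit diffeomorphism $M\cong B^4$. Since $M$ is already known to be homeomorphic to $B^4$ by Freedman's theorem, the entire remaining difficulty is smooth: I expect to produce, from the Kirby diagram, an explicit sequence of handle slides and cancellations that empties the diagram. This last step is the main obstacle. Showing that a balanced presentation of the trivial group collapses geometrically is, in general, an Andrews--Curtis-type problem and is genuinely hard; what rescues us at complexity $0$ and $1$ is that $n$ is very small, so the presentations are short enough that the required cancellations can be found and verified directly. My one worry is that even at these low complexities a candidate diagram might resist naive simplification, in which case I would instead compute a sharper smooth invariant, or re-examine the gleam constraints, in order to rule it out before attempting cancellation.
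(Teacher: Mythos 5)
First, a remark on context: the paper does not prove this theorem at all; it is quoted from Costantino \cite{Cos06a} (part (1)) and Naoe \cite{Nao17} (part (2)), and the paper's own contribution (Lemma \ref{lem:acyclic 4-manifold of special shadow-complexity at most 2} and Theorem \ref{thm:general setting}) extends part (1) by the same mechanism Costantino uses. Measured against those proofs, your proposal has two genuine gaps.

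For part (2) your opening reduction ``to a finite, explicit combinatorial enumeration'' is structurally impossible. Shadow-complexity $0$ does not mean the shadow is special: the singular set is a disjoint union of circles, the regions are merely spheres with holes (Lemma \ref{lem:Regions are planar}), and there are \emph{infinitely many} acyclic simple polyhedra without vertices --- for instance the arbitrarily large tree-shaped polyhedra described in Appendix \ref{sec:Acyclic simple polyhedron without vertices and with a single boundary circle} --- each carrying arbitrary gleams. Moreover, part (2) has no hypothesis $\partial M \cong S^3$, so the constraint you propose to use to ``eliminate most gleams'' is not available; that the boundary is $S^3$ is part of the conclusion. Naoe's actual argument is of a completely different nature: he proves that every acyclic simple polyhedron without vertices collapses onto $D^2$, hence $M$ collapses onto a point, and a compact PL $4$-manifold collapsing to a point is a PL ball, whence $M \cong D^4$ (PL $=$ DIFF in dimension $4$). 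No enumeration, no Kirby diagram, no boundary condition.

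For part (1), where the enumeration is indeed finite (for a special shadow with $n$ vertices, acyclicity forces exactly $n+1$ regions by the Euler-characteristic count of Lemma \ref{lem:number of regions of acyclic special polyhedra}, and the candidates are listed in the Appendix), the step you yourself flag as ``the main obstacle'' is precisely the missing idea, and your fallback does not repair it: you cannot ``rule out'' by a smooth invariant a candidate that genuinely is a shadow of an acyclic $4$-manifold with $S^3$ boundary --- distinguishing such an $M$ from $D^4$ by invariants is exactly the smooth $4$-dimensional Poincar\'e problem. The ingredient that makes the argument close is Gabai's Property R theorem \cite{Gab87} (Theorem \ref{thm:Property R}): one never needs to empty the diagram. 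It suffices to cancel $n$ of the $n+1$ pairs of dotted circles and framed link components, leaving one dotted circle and one framed knot; then the hypothesis $\partial M \cong S^3$ together with Property R forces this two-component diagram to be the standard canceling pair of Figure \ref{fig:property_R}, so $M \cong D^4$ (this is Lemma \ref{lem:a sufficient condition to be the disk in terms of special shadow}). This converts the Andrews--Curtis-type difficulty you worry about into the finite, checkable combinatorial condition that each listed polyhedron admits $n$ canceling pairs. Without Property R your plan has no termination criterion, and the worry in your final sentence is fatal rather than incidental.
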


In this paper, using shadows we provide a sufficient condition for 
a compact, smooth, acyclic $4$-manifold with boundary the $3$-sphere 
to be diffeomorphic to the standard $4$-ball (Theorem \ref{thm:general setting}). 
As a direct consequence, we show 
(in Theorem \ref{thm:acyclic 4-manifold of connected shadow-complexity at most 2}) 
that every acyclic $4$-manifold $M$ of shadow-complexity at most $2$  
with $\partial M \cong S^3$ is diffeomorphic to the standard $4$-ball. 
Precisely speaking, we show the same thing for a wider class of 4-manifolds, 
that is, $4$-manifolds of \textit{connected shadow-complexity} at most $2$. 
See Section $\ref{sec:Shadows}$ for the definition.

Throughout the paper, we will work in the smooth category unless otherwise mentioned.

\section{Shadows}
\label{sec:Shadows}

A compact and connected polyhedron $X$ is called 
a {\it simple polyhedron} if 
every point of $X$ has a star neighborhood homeomorphic to 
one of the five models 
shown in Figure \ref{fig:simple_polyhedron}. 
\begin{figure}[htbp]
\begin{center}
\begin{minipage}{400pt}
\includegraphics[width=14cm,clip]{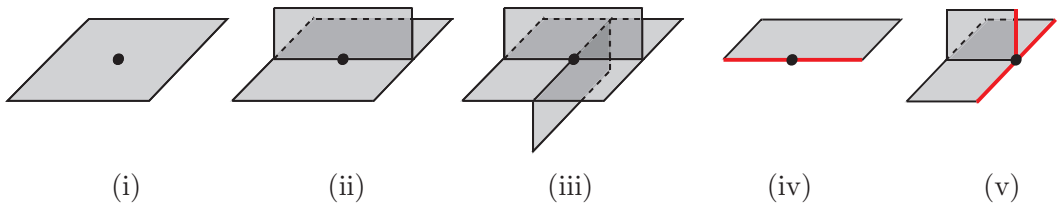}
\begin{picture}(400,0)(0,0)
\put(40,0){(i)}
\put(122,0){(ii)}
\put(205,0){(iii)}
\put(288,0){(iv)}
\put(370,0){(v)}
\end{picture}
\end{minipage}
\caption{The local models of a simple polyhedron.}
\label{fig:simple_polyhedron}
\end{center}
\end{figure}
A point whose star neighborhood is shaped on the model 
(iii) is called a {\it vertex} of $X$, and we denote the set of vertices of $X$  by $V(X)$. 
The set of points whose stat neighborhoods are shaped on the models (ii), (iii) or (v) is called the {\it singular set} of $X$, 
and we denote it by $S(X)$. 
The set of points whose star neighborhoods are shaped on the models  (iv) or (v) is called the {\it boundary} of $X$ and 
we denote it by $\partial X$. 
Each component of $X \setminus S(X)$ 
is called a \textit{region}, and we denote the set of regions of $X$  by $R(X)$. 
%A region is said to be {\it internal} if it does not touch the boundary of $X$. 
The number of vertices of $X$ is called the {\it complexity} of $X$. 
In \cite{KMN18}, the {\it connected complexity} of $X$ was defined to be the maximum number of vertices 
that are contained in some connected component of $S(X)$. 
A simple polyhedron $X$ is said to be {\it closed} if $\partial X = \emptyset$. 
A simple polyhedron $X$ is said to be {\it special} 
if each region of $X$ is simply-connected.  
% $X$ is {\it special} if the natural stratification $V(X) \subset 
% S(X) \subset X$ induces a CW decomposition of $X$. 
We note that if $X$ is special and $X \not\cong D^2$, $X$ is closed and $S(X)$ is connected.

\begin{definition}
A simple polyhedron $X$ embedded in a compact oriented smooth 4-manifold $M$ is called a {\it shadow} of $M$ if 
\begin{itemize}
\item
%$W \setminus P$ is diffeomorphic to $\partial W \times (0, 1]$, or equivalently, 
$M$ collapses onto $X$ after equipping the natural PL structure on $M$; 
\item
$X$ is {\it locally flat}, that is, 
each point $x$ of $X$ has a neighborhood $\Nbd(x ; X)$ that lies in a 3-dimensional submanifold of $M$; and 
\item
$\partial M \cap X = \partial X$. 
\end{itemize}
\end{definition}
Note that $\partial X$ is a {\it knotted trivalent graph}, 
i.e. a smooth graph in $\partial M$ with only vertices of valence $3$, where 
we admits knot components as well.
For $k \in \{0,1,2,3\}$ a {\it $k$-handlebody} is defined to 
be an oriented $4$-manifold made of finitely many handles of index at most $k$.
In \cite{Tur92, Tur94}, Turaev proved that any 2-handlebody has a (special) shadow. 
In \cite{Cos05a, CT08}, the {\it shadow-complexity} ({\it special  shadow-complexity}) of 
a 2-handlebody $M$, 
denoted by $\mathrm{sc}(M)$ (resp. $\mathrm{sc}^{\mathrm{sp}}(M)$), 
was defined to be the minimum complexity of any shadow (resp. special shadow) of $M$. 
In \cite{KMN18}, the {\it connected shadow-complexity} of $M$, denoted by $\mathrm{sc^*}(M)$, 
was defined to be the minimum connected complexity of any shadow of $M$. 
Note that the shadow-complexity of $M$ is $0$ if and only if the connected shadow-complexity 
of $M$ is $0$. 
In general, we have $\mathrm{sc^*}(M) \leq \mathrm{sc}(M) \leq \mathrm{sc}^{\mathrm{sp}}(M)$. 

A {\it framed knotted trivalent graph} is a knotted trivalent graph 
equipped with a framing, i.e. an oriented surface
thickening of the graph considered up to isotopy. 
Let $M$ be a compact oriented smooth 4-manifold, and let $X \subset M$ be a shadow. 
Fix a framing of the knotted trivalent graph $\Gamma := \partial X$. 
To each region $R$ of $X$, we may assign a half-integer $\gl (R)$, called a {\it gleam}, 
as follows. 
Let $\iota : R \hookrightarrow M$ be the inclusion.  
% Let $\bar{R}$ be the compact surface whose interior is diffeomorphic to $R \setminus \partial X$.  
Let $\bar{R}$ be the metric completion of $R$ with the path metric inherited from a Riemanian metric on $R$. 
Suppose for simplicity that the natural extension $\bar{\iota} : \bar{R} \to M$ is injective. 
The boundary $\partial \bar{R}$ of $\bar{R}$ consists of simple closed curves. 
The framing of $\Gamma$ and the germs $\Nbd(\bar{R}; X) \setminus R$ of the remaining regions near $\partial \bar{R}$ 
provide a structure of interval bundle over $\partial \bar{R}$, 
which is a sub-bundle of the normal bundle of $\partial \bar{R}$ in $M$. 
Let $\bar{R}'$ be a generic small perturbation of $\bar{R}$ such that $\partial \bar{R}'$ lies in the interval
bundle. 
The gleam $\gl (R)$ is then (well-)defined by counting the finitely many 
isolated intersections of $\bar{R}$ and $\bar{R}'$ with signs 
as follows: 
\[
\gl (R) = \frac{1}{2} \# (\partial \bar{R} \cap \partial \bar{R}') + \# (\Int\bar{R} \cap \Int\bar{R}') \in \frac{1}{2} \Integer . 
\]
We call a polyhedron $X$ equipped with a gleam on each region a {\it shadowed polyhedron}.  
In \cite{Tur92, Tur94}, Turaev showed that the 4-manifold $M$ and the framed knotted trivalent graph 
$\Gamma \subset \partial M$ are recovered from a shadowed polyhedron $(X, \gl)$ in a canonical way.

Let $K_i$ ($i=1,2$) be a framed oriented knot in the boundary of a compact oriented 4-manifold $M_i$. 
Let $B_i$ $(i=1,2)$ be a 3-ball in $\partial M_i$ such that 
$B_i \cap K_i$ is a properly embedded trivial arc in $B_i$. 
Let $g : (B_2, K_2) \to (B_1, K_1)$ be a diffeomorphism such that 
\begin{itemize}
\item
$g : B_2 \to B_1$ is orientation-reversing; 
\item
$g|_{K_2} : K_2 \to K_1$ is orientation-reversing; and 
\item
$g$ respects (the corresponding parts of) the framings. 
\end{itemize}
We denote the framed knot $K := (K_1 \setminus \Int D_1) \cup_{g|_{K_2 \cap \partial D_2}} (K_2 \setminus \Int D_2)$ 
in $\partial (M_1 \natural M_2) = \partial M_1 \# \partial M_2$ by 
$K_1 \# K_2$, and call it the {\it connected sum} of $K_1$ and $K_2$. 
The following two lemmas are straightforward from the definition. 

\begin{lemma}
\label{lem:connected sum of framed knots}
Let $K_i$ $(i=1,2)$ be a framed oriented knot in the boundary of a compact oriented $4$-manifold $M_i$. 
Let $f: \Nbd(K_2; \partial M_2) \to \Nbd(K_1; \partial M_1)$ be an orientation-reversing 
diffeomorphism such that 
$f|_{K_2} : K_2 \to K_1$ is orientation-reversing and 
$f$ respects the framings. 
Then the $4$-manifold $M_1 \cup_f M_2$ is obtained from $M_1 \natural M_2$ by attaching 
a $2$-handle along the framed knot $K_1 \# K_2$. 
\end{lemma}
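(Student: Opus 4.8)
The plan is to split the solid-torus gluing into a boundary connected sum together with a single $2$-handle attachment, by recording a handle decomposition of the gluing region. Because $f$ is orientation-reversing, reverses the core, and respects the framings, after an isotopy I may assume that, with respect to product identifications $\Nbd(K_i;\partial M_i)\cong S^1\times D^2$ in which $K_i=S^1\times\{0\}$ and the framing is $S^1\times\{\mathrm{pt}\}$, the map $f$ is the model $f(\theta,z)=(-\theta,z)$. I would choose an arc $\alpha\subset S^1$ invariant under $\theta\mapsto-\theta$, with complementary arc $\beta$, and set $B_i:=\alpha\times D^2\subset\Nbd(K_i;\partial M_i)$; then $B_i$ is a $3$-ball meeting $K_i$ in a trivial arc and $f(B_2)=B_1$. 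This presents the solid torus $\Nbd(K_1;\partial M_1)\cong S^1\times D^2$ as the union of a $3$-dimensional $0$-handle $B_1$ and a $3$-dimensional $1$-handle $\beta\times D^2$ attached along the two disks $\partial\alpha\times D^2$.

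I would then carry out $M_1\cup_f M_2$ in two stages dictated by this handle decomposition. First, gluing $M_1$ and $M_2$ along the $0$-handles $B_1$ and $B_2$ by $f|_{B_2}$ is the gluing of two oriented $4$-manifolds along a $3$-ball in their boundaries, and so produces the boundary connected sum $W:=M_1\natural M_2$. In $\partial W$ the two complementary slabs $\beta\times D^2$ meet along the capping disks $\partial\alpha\times D^2$, which now lie on the connected-sum sphere, and their union is a solid torus $\Nbd(K_1\# K_2;\partial W)$ with core the connected sum $K_1\# K_2$. Second, identifying these two slabs by $f$ is the gluing along the remaining $1$-handle of $\Nbd(K_1;\partial M_1)$, and the claim is that this is exactly the attachment of a $4$-dimensional $2$-handle along $K_1\# K_2$. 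Its framing is recorded by the $D^2$-factor of the slabs, and since $f$ respects the framings the two product structures match, so the resulting framing is the one on $K_1\# K_2$ induced by the framings of $K_1$ and $K_2$.

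The main obstacle is the last assertion, namely that gluing along the $1$-handle of the solid torus is an honest $2$-handle attachment and that this introduces no spurious twisting of the framing. This is an instance of the general principle that gluing two oriented $4$-manifolds along a codimension-zero piece $P$ of their boundaries, equipped with a handle decomposition, amounts to attaching, for each $j$-handle of $P$, a $4$-dimensional $(j+1)$-handle; here $P\cong S^1\times D^2$ contributes a boundary connected sum from its $0$-handle and a single $2$-handle from its $1$-handle, whose attaching circle is the core $K_1\# K_2$. I would verify the principle in a collar local model and track the framing there, and I would confirm the outcome on the test case $M_1=M_2=D^4$ with $K_i$ a $0$-framed unknot, where gluing along $\Nbd(K_i;S^3)$ gives $D^4\cup_f D^4\cong S^2\times D^2\cong(D^4\natural D^4)\cup(\text{$0$-framed $2$-handle})$, in agreement with the statement.
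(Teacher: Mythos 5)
Your proof is correct, but there is nothing in the paper to compare it against: the authors state this lemma (together with Lemma \ref{lem:gluing formula for shadows}) as ``straightforward from the definition'' and give no argument. What you wrote is the standard justification that they leave implicit, and all of the essential points are in place: splitting $\Nbd(K_1;\partial M_1)\cong S^1\times D^2$ into a $3$-dimensional $0$-handle $B_1=\alpha\times D^2$ and a $1$-handle $\beta\times D^2$; observing that gluing $M_1$ to $M_2$ along the $3$-balls $B_1\cong B_2$ by the orientation-reversing map $f$ yields $M_1\natural M_2$; and realizing the residual identification of the two slabs as a $4$-dimensional $2$-handle attachment via the collar principle (each $j$-handle of the gluing region contributes a $(j+1)$-handle, which one checks on $\Nbd(K_1;\partial M_1)\times[0,1]$: the thickened $1$-handle is $(\beta\times[0,1])\times D^2\cong D^2\times D^2$ attached along $\partial(\beta\times[0,1])\times D^2$). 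Your bookkeeping of the attaching circle --- the arcs $K_1\setminus\Int B_1$ and $K_2\setminus\Int B_2$ joined by two arcs running over the new $1$-handle, i.e.\ exactly the knot $K_1\# K_2$ of the paper's definition --- and of the handle framing, given by the $D^2$-factor and matched across the two sides because $f$ respects the framings, is precisely what makes the conclusion hold for the \emph{framed} knot $K_1\# K_2$. One simplification worth noting: your initial normalization of $f$ to the model $(\theta,z)\mapsto(-\theta,z)$ is true but tacitly appeals to the mapping class group of the solid torus (total orientation, core orientation, and framing detect the isotopy class). You can bypass this entirely, either by taking the product identification of $\Nbd(K_2;\partial M_2)$ to be $(\mathrm{model})^{-1}\circ\phi_1\circ f$, where $\phi_1$ is a framed product identification of $\Nbd(K_1;\partial M_1)$, or equivalently by choosing $B_1=\alpha\times D^2$ on the $M_1$ side, setting $B_2:=f^{-1}(B_1)$, and transporting all product structures through $f$; with that adjustment no isotopy and no classification result is needed, and the rest of your argument goes through verbatim.
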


\begin{lemma}
\label{lem:gluing formula for shadows}
Let $X_i$ $(i=1,2)$ be a shadowed polyhedron of a compact $4$-manifolds $M_i$. 
Let $K_i$ be a $($framed$)$ knot component of $\partial X_i$. 
Fix an orientation of each of $K_1$ and $K_2$. 
Let $f: \Nbd(K_2; \partial M_2) \to \Nbd(K_1; \partial M_1)$ be a diffeomorphism 
such that 
$f|_{K_2} : K_2 \to K_1$ is orientation-reversing and 
$f$ respects the framings. 
Then the shadowed polyhedron obtained by the move shown in Figure $\ref{fig:gluing_shadows}$ 
is a shadow of $M_1 \cup_f M_2$.
\end{lemma}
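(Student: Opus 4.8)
The plan is to realize the move of Figure \ref{fig:gluing_shadows} as the direct gluing $X := X_1 \cup_f X_2$ obtained by identifying $K_1$ with $K_2$ through $f|_{K_2}$, and then to show that $M_1 \cup_f M_2$ collapses onto $X$ with the indicated gleams, so that $X$ is a shadow of $M_1 \cup_f M_2$. First I would record the local picture near the knots. Since $K_i$ is a \emph{knot} component of $\partial X_i$, it contains no trivalent vertex of the graph $\partial X_i$, so each of its points is of boundary type (model (iv)); consequently a single region $R_i \in R(X_i)$ contains $K_i$ in its closure, and a neighborhood of $K_i$ in $X_i$ is a half-open collar $K_i \times [0,1) \subset \overline{R_i}$. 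The framing of $K_i$ is, by definition, the interval-bundle structure along $K_i$ that enters the computation of $\gl(R_i)$.

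Next I would perform the identification at the level of polyhedra. Gluing two boundary half-planes (model (iv)) along their common boundary line produces the model (i), so along the identified circle $K_1 = K_2$ every point of $X$ becomes a nonsingular surface point, while all other star neighborhoods are inherited unchanged from $X_1$ and $X_2$. Hence $X$ is again a connected simple polyhedron, the circle $K_1 = K_2$ lies in $X \setminus S(X)$, the two regions $R_1$ and $R_2$ fuse into a single region $R$, and $\partial X = (\partial X_1 \setminus K_1) \sqcup (\partial X_2 \setminus K_2)$. I would equip the regions of $X$ other than $R$ with the gleams they carried in $X_1$ or $X_2$, and $R$ with the gleam prescribed in Figure \ref{fig:gluing_shadows}.

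To see that $X$ is a shadow of $M_1 \cup_f M_2$, I would verify the three defining conditions. Because $X_i$ lies in a $3$-dimensional submanifold near $K_i$ and $f$ is a diffeomorphism of the $3$-dimensional neighborhoods $\Nbd(K_i;\partial M_i)$, the fused surface $R$ again lies in a $3$-manifold, so $X$ is locally flat; the condition $\partial(M_1 \cup_f M_2) \cap X = \partial X$ is immediate from the same local model. For the collapse, $M_i$ collapses onto $X_i$ and the two collapses are compatible along the identified collars, so $M_1 \cup_f M_2$ collapses onto $X$. Here Lemma \ref{lem:connected sum of framed knots} supplies the handle-theoretic confirmation that $M_1 \cup_f M_2$ is $M_1 \natural M_2$ with a single $2$-handle attached along $K_1 \# K_2$, which matches the appearance of exactly one fused region $R$.

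The point requiring genuine care is the gleam of $R$. In passing from $\overline{R_1}, \overline{R_2}$ to $\overline{R}$ the circle $K_1 = K_2$ ceases to be a boundary curve, so the boundary term $\tfrac12 \#(\partial \overline{R} \cap \partial \overline{R}')$ in the definition of $\gl(R)$ loses its contributions there while the interior term absorbs the corresponding twisting. The hypothesis that $f$ \emph{respects the framings} is exactly what makes the two interval bundles along $K_1$ and $K_2$ glue to a single normal push-off of $R$, so that $\gl(R)$ is well-defined and equal to the value recorded in Figure \ref{fig:gluing_shadows}. I expect this framing bookkeeping, rather than the collapse or the local flatness, to be the only delicate step.
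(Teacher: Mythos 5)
The paper offers no proof of this lemma at all---it and Lemma \ref{lem:connected sum of framed knots} are stated to be ``straightforward from the definition''---and your proposal is precisely that definitional verification, carried out correctly: gluing the two model-(iv) collars along $K_1 = K_2$ creates only model-(i) points, so the two adjacent regions fuse into a single region of the simple polyhedron $X_1 \cup_f X_2$, which is properly embedded and locally flat in $M_1 \cup_f M_2$ and onto which $M_1 \cup_f M_2$ collapses (your compatibility-of-collapses assertion is standard regular-neighborhood bookkeeping, at the same level of rigor the paper itself adopts). You also isolate the genuinely delicate point and resolve it the intended way: since $f$ respects the framings, the interval bundles over $K_1$ and $K_2$ match up, so the two half-integer boundary contributions of the form $\frac{1}{2}\#(\partial \bar{R}_i \cap \partial \bar{R}_i')$ along the glued circle combine into integer-valued interior intersections of the fused region with its push-off, yielding exactly the gleam $n_1 + n_2$ of Figure \ref{fig:gluing_shadows}.
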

\begin{figure}[htbp]
\begin{center}
\begin{minipage}{10cm}
\includegraphics[width=10cm,clip]{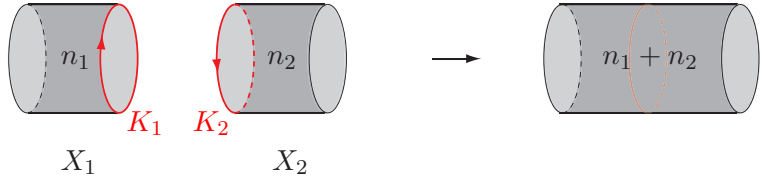}
\begin{picture}(400,0)(0,0)
\put(20,0){$X_1$}
\put(100,0){$X_2$}
\put(45,15){\color{red}$K_1$}
\put(70,15){\color{red}$K_2$}
\put(20,40){$n_1$}
\put(98,40){$n_2$}
\put(225,40){$n_1 + n_2$}
\end{picture}
\end{minipage}
\caption{This move gives a gluing formula for shadows.}
\label{fig:gluing_shadows}
\end{center}
\end{figure}

Let $X$ be a simple polyhedron. 
In general, a polyhedron obtained by collapsing $X$ might be 
no longer a simple polyhedron but an {\it almost-simple polyhedron}, 
i.e. a compact polyhedron where the link of each point 
can be embedded into the complete graph $\Gamma_4$ with $4$ vertices, 
see Matveev \cite{Mat03} for the details. 
A point of an almost-simple polyhedron is called a {\it true vertex} 
if its link is $\Gamma_4$, equivalently, 
the star neighborhood of the point is shaped on the model Figure \ref{fig:simple_polyhedron} (iii). 
An almost-simple polyhedron is said to be {\it minimal with respect to collapsing} 
if it cannot be collapsed onto any proper subpolyhedron. 
Up to a small perturbation, each point of such a polyhedron has a star neighborhood of 
one of Figure \ref{fig:simple_polyhedron} (i)-(iii) and 
Figure \ref{fig:almost-simple_polyhedron} (i)-(iv).  
\begin{figure}[htbp]
\begin{center}
\begin{minipage}{9cm}
\includegraphics[width=9cm,clip]{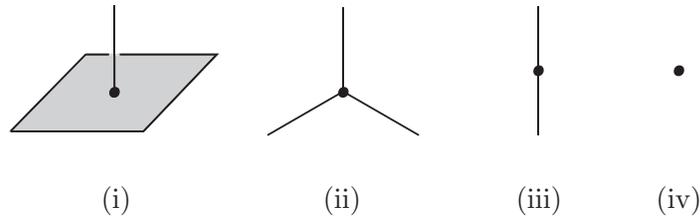}
\begin{picture}(400,0)(0,0)
\put(35,0){(i)}
\put(119,0){(ii)}
\put(192,0){(iii)}
\put(245,0){(iv)}
\end{picture}
\end{minipage}
\caption{Local models of an almost-simple polyhedron.}
\label{fig:almost-simple_polyhedron}
\end{center}
\end{figure}
Note that a simple polyhedron is minimal if and only if it is closed. 

\begin{lemma}
\label{lem:non-collapsible polyhedra}
Let $M$ be a $4$-manifold of shadow-complexity 
$($resp. connected shadow-complexity$)$ $n$ $(\geq 1)$. 
Then $M$ admits a closed shadow of complexity $($resp. connected complexity$)$ exactly $n$. 
\end{lemma}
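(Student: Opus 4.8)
The plan is to realize the invariant by a single shadow and then collapse it until it becomes closed, using the observation recorded just above that a simple polyhedron is minimal with respect to collapsing if and only if it is closed. Concretely, choose a shadow $X$ of $M$ whose complexity (resp.\ connected complexity) equals $n$; such an $X$ exists because $\mathrm{sc}(M)=n$ (resp.\ $\mathrm{sc^*}(M)=n$). If $\partial X=\emptyset$ there is nothing to prove, so assume $\partial X\neq\emptyset$. Then $X$ is not closed, hence not minimal, so it admits a nontrivial collapse; performing elementary collapses as long as possible produces a polyhedron $P$ with $X\searrow P$ that is minimal with respect to collapsing. Composing with $M\searrow X$ gives $M\searrow P$, so $P$ still satisfies the collapsing requirement in the definition of a shadow.

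Next I would check that the other defining properties of a shadow, and both complexity counts, survive the collapse. Local flatness is inherited: for $x\in P\subset X$ any $3$-dimensional submanifold of $M$ containing $\Nbd(x;X)$ also contains $\Nbd(x;P)$. Because an elementary collapse only deletes an open region together with a free edge, it can never create a true vertex, so the number of true vertices of $P$ is at most $n$; and since such a deletion only removes cells from the singular set, it can shrink or disconnect a component of $S(P)$ but never merge two of them, so the connected complexity does not increase either. Finally, as the collapses remove precisely the strata of $X$ lying in $\partial M$ (these are exactly $\partial X$), one checks that $P\cap\partial M=\partial P$.

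If $P$ is a simple polyhedron, we are done: by the quoted equivalence its minimality forces $\partial P=\emptyset$, so $P$ is a closed shadow of $M$ of complexity (resp.\ connected complexity) at most $n$, and minimality of $X$ upgrades ``at most $n$'' to ``exactly $n$''.

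The main obstacle is that $P$ need only be \emph{almost-simple}: the maximal collapse may strand one-dimensional strata modeled on Figure~\ref{fig:almost-simple_polyhedron}, and such a $P$ is not a shadow in the strict sense even though its true vertices already realize the right count. The heart of the argument is therefore to dispose of these one-dimensional pieces. I would handle each surviving arc or circle by a modification supported in a ball around it, using local flatness to fix a $3$-dimensional chart and absorbing the stratum into the adjacent regions (or cancelling it against a free region), so as to return to a simple polyhedron. The delicate point, and where the real care is needed, is to arrange these moves inside the ambient $4$-manifold so that no new true vertex is introduced and no component of the singular set is enlarged; once this is done one obtains a closed simple shadow $Y$ of $M$ with both complexity counts at most $n$, and minimality of $X$ again forces equality.
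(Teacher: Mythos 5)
Your overall strategy is the same as the paper's: collapse a minimal-complexity shadow as far as possible, note that collapsing cannot create true vertices (nor merge singular components), observe that a \emph{simple} polyhedron that is minimal with respect to collapsing is closed, and then confront the fact that the maximal collapse may produce only an \emph{almost-simple} polyhedron with one-dimensional strata. However, there is a genuine gap exactly where you yourself locate ``the heart of the argument'': you never exhibit the modification that converts the minimal almost-simple polyhedron $P$ into a closed simple polyhedron, you only assert that each stranded arc or circle can be handled ``by a modification supported in a ball around it, \ldots absorbing the stratum into the adjacent regions (or cancelling it against a free region).'' Neither phrase withstands scrutiny. Cancelling against a free region is impossible: $P$ is minimal with respect to collapsing, so it has no free faces at all. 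And a purely local absorption inside a ball does not address the real constraint, which is global: the modified polyhedron must still be a shadow of the \emph{same} manifold $M$, i.e. $M$ must still collapse onto it. The paper achieves this with two explicit moves (Figures \ref{fig:almost-simple_to_simple1} and \ref{fig:almost-simple_to_simple2}): one applied to a path of the graph part joining two distinct points of the simple part $Y'$, and one applied to a ``racket'' whose univalent vertex lies on $Y'$. Each move replaces the one-dimensional stratum by two-dimensional pieces in such a way that the polyhedra before and after have the \emph{same regular neighborhood in $M$} (this is what guarantees the result is again a shadow of $M$), each move keeps the polyhedron minimal with respect to collapsing, creates no true vertices, and strictly decreases the number of edges of the graph part, so the process terminates in a closed shadow. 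Your proposal defers precisely these constructions and verifications, so it asserts the lemma rather than proves it.

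A second, smaller omission: your case analysis covers only ``$P$ simple'' and ``$P$ almost-simple with one-dimensional strata attached to a two-dimensional part.'' The maximal collapse could a priori yield a polyhedron that is entirely a graph, in which case there are no adjacent regions to absorb anything into and your procedure cannot start. The paper rules this case out separately: if $P$ is a graph, then $M$ is a $4$-dimensional $1$-handlebody, hence has shadow-complexity $0$, contradicting the hypothesis $n \geq 1$. Some such argument is needed before the induction on the number of graph edges makes sense.
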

\begin{proof}
Let $M$ be a $4$-manifold of shadow-complexity $n$ $(\geq 1)$. 
Let $X$ be a shadow of $M$ with exactly $n$ vertices. 
Then $X$ collapses onto an almost simple polyhedron $Y$ 
that is minimal with respect to collapsing and has at most $n$ true vertices. 
If $Y$ remains to be a simple polyhedron, there is nothing to prove. 
If $Y$ is a graph, i.e. a $0$- or $1$-dimensional complex, 
then $M$ is a $1$-handlebody, which contradicts the assumption that 
the shadow-complexity $n$ of $M$ is at least $1$. 
Suppose that $Y$ is not a graph. 
Then $Y$ is the union of a simple polyhedron $Y'$ and a graph $\Gamma$. 

If there exists a path, i.e. a subgraph homeomorphic to $[0,1]$,  
in $\Gamma$ that connects two different points of 
$Y'$, we apply the move shown in Figure \ref{fig:almost-simple_to_simple1}. 
\begin{figure}[htbp]
\begin{center}
\begin{minipage}{12cm}
\includegraphics[width=12cm,clip]{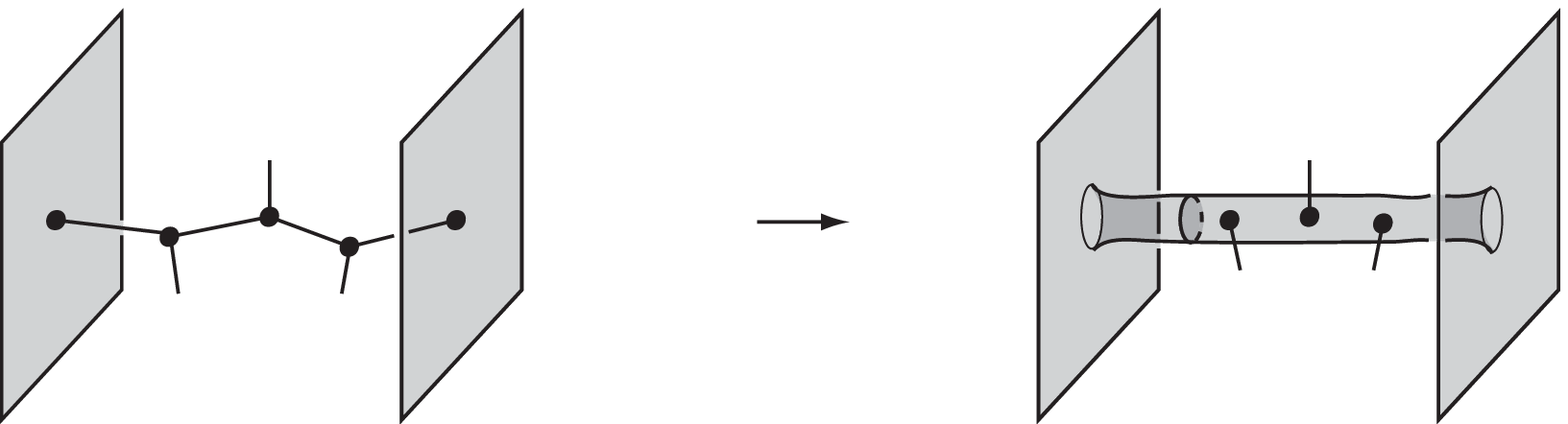}
%\begin{picture}(400,0)(0,0)
%\put(40,0){(i)}
%\put(120,0){(ii)}
%\put(200,0){(iii)}
%\end{picture}
\end{minipage}
\caption{This move reduces the number of edges of the graph part, and does not 
produce true vertices.}
\label{fig:almost-simple_to_simple1}
\end{center}
\end{figure}
Otherwise, there exists a {\it racket}, i.e. a subgraph homeomorphic to 
\[ \{ (x,y) \in \Real^2 \mid x^2 + y^2 = 1 \} \cup \{ (x, 0) \in \Real^2 \mid 1 \leq x \leq 2 \}, \]
in $Y'$ with the (unique) univalent vertex on $Y'$. 
In this case, we apply the move shown in Figure \ref{fig:almost-simple_to_simple2}. 
\begin{figure}[htbp]
\begin{center}
\begin{minipage}{12cm}
\includegraphics[width=12cm,clip]{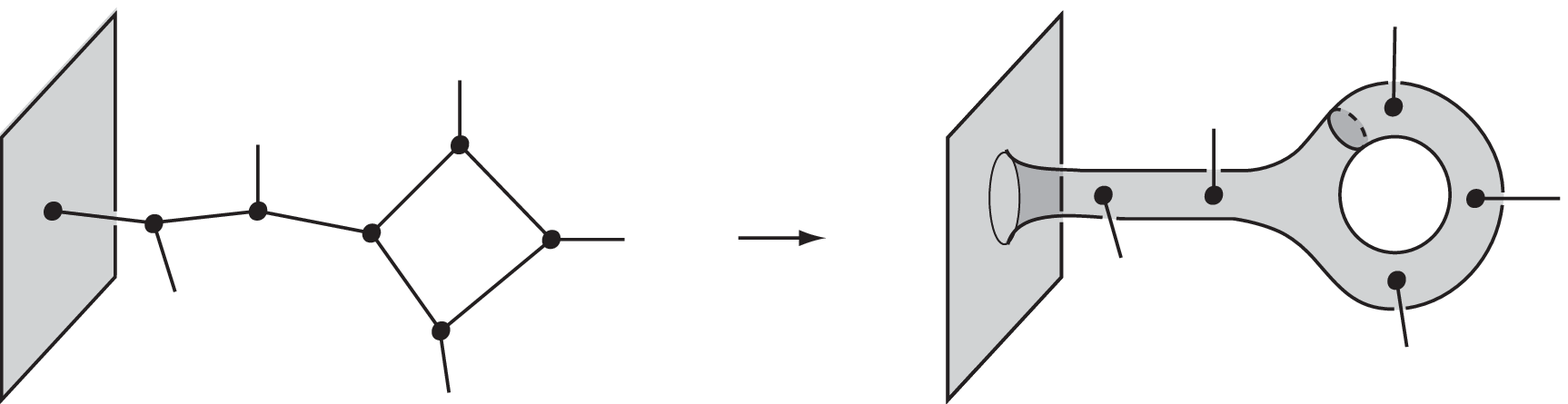}
%\begin{picture}(400,0)(0,0)
%\put(40,0){(i)}
%\put(120,0){(ii)}
%\put(200,0){(iii)}
%\end{picture}
\end{minipage}
\caption{This move reduces the number of edges of the graph part, and does not 
produce true vertices.}
\label{fig:almost-simple_to_simple2}
\end{center}
\end{figure}
Note that the polyhedra before and after each of the above two moves have the same 
regular neighborhood in $M$.
Further, the resulting polyhedron remains to be minimal with respect to collapsing, and 
the number of true vertices does not increase by each of the moves. 
Since the number of edges of $\Gamma$ is finite, by applying these moves finitely many times, 
we finally end with a closed shadow $Z$ of $M$ with at most $n$ vertices 
(see Figure \ref{fig:almost-simple_to_simple_example}). 
\begin{figure}[htbp]
\begin{center}
\begin{minipage}{12cm}
\includegraphics[width=12cm,clip]{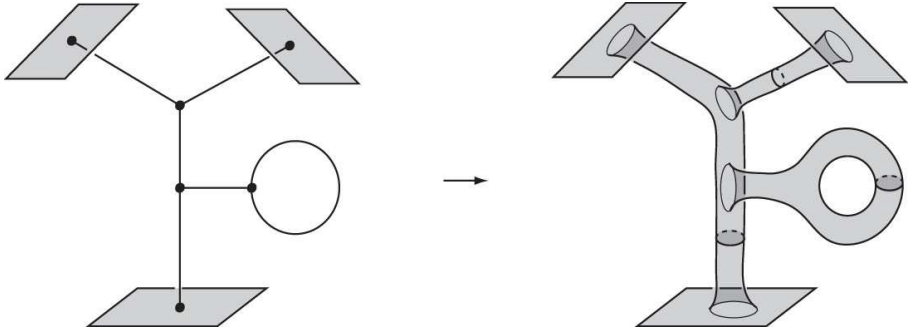}
%\begin{picture}(400,0)(0,0)
%\put(40,0){(i)}
%\put(120,0){(ii)}
%\put(200,0){(iii)}
%\end{picture}
\end{minipage}
\caption{From an almost simple polyhedron to a simple polyhedron.}
\label{fig:almost-simple_to_simple_example}
\end{center}
\end{figure}
Since the shadow-complexity of $M$ is $n$, $Z$ (and so $Y'$) has exactly 
$n$ (true) vertices.   

The argument for the connected shadow-complexity runs exactly the same way. 
\end{proof}

\begin{remark}
It is easy to see that the gleams of the small disks region produced by the moves 
in Figures \ref{fig:almost-simple_to_simple1} and \ref{fig:almost-simple_to_simple2} 
are zero. (We do not use this fact in this paper.)
\end{remark}

\subsection{Diagrams of special polyhedra}
\label{subsec:Diagrams of special polyhedra}

Let $X$ be a special polyhedron having at least one vertex. 
Recall that the singular part $S(X)$ of $X$ is a connected (possibly non-simple) 4-regular graph. 
Let $\{v_1, \ldots, v_{k}\}$ be the set of vertices and $\{e_1, \ldots, e_{2k}\}$ the set of edges of $S(X)$. 
Set $X' := \Nbd (S(X); X)$. 
Note that the closure of $X \setminus X'$ consists of disks, hence 
the topological type of $X$ is uniquely recovered from that of $X'$. 
At each vertex $v_i$, choose a neighborhood $V_i := \Nbd (v_i ; X')$ homeomorphic to 
Figure \ref{fig:simple_polyhedron} (iii) so that 
each component of the closure of $X \setminus \bigcup_{i=1}^k V_i$ is homeomorphic to 
$Y \times [0,1]$, where $Y$ is the cone over 3 points. 
Let $E_j$ be the component of $X \setminus \bigcup_{i=1}^k V_i$ corresponding to the edge $e_j$. 
We call each of $V_i$ and $E_j$ a {\it block} of $X'$. 

The diagram of $X$ is obtained as follows. 
Draw a diagram (with only normal crossings) of the graph $S(X)$ on $\Real^2$. 
In the diagram, replace  each vertex $v_i$ of $S(X)$ with 
the local diagram (which describes 
the block $V_i$) shown in Figure \ref{fig:blocks} (i). 
Replace  each edge $e_j$ of $S(X)$ in the diagram with 
one of the four local diagrams (which describes 
the block $E_j$) shown in Figure \ref{fig:blocks} (ii) so that 
the gluing of the end of the strands matches the combinatorial 
structure of $X'$. 
\begin{figure}[htbp]
\begin{center}
\begin{minipage}{12cm}
\includegraphics[width=12cm,clip]{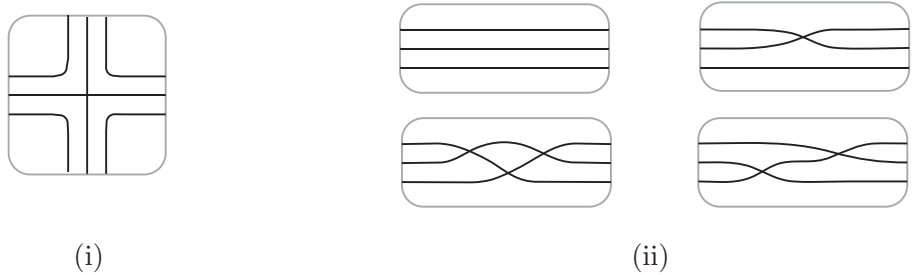}
\begin{picture}(400,0)(0,0)
\put(25,0){(i)}
\put(236,0){(ii)}
\end{picture}
\end{minipage}
\caption{Diagrams corresponding to blocks.}
\label{fig:blocks}
\end{center}
\end{figure}
 Apparently, one simple polyhedron admits many diagrams, 
but each diagram defines a unique special polyhedron. 
\begin{figure}[htbp]
\begin{center}
\begin{minipage}{8cm}
\includegraphics[width=8cm,clip]{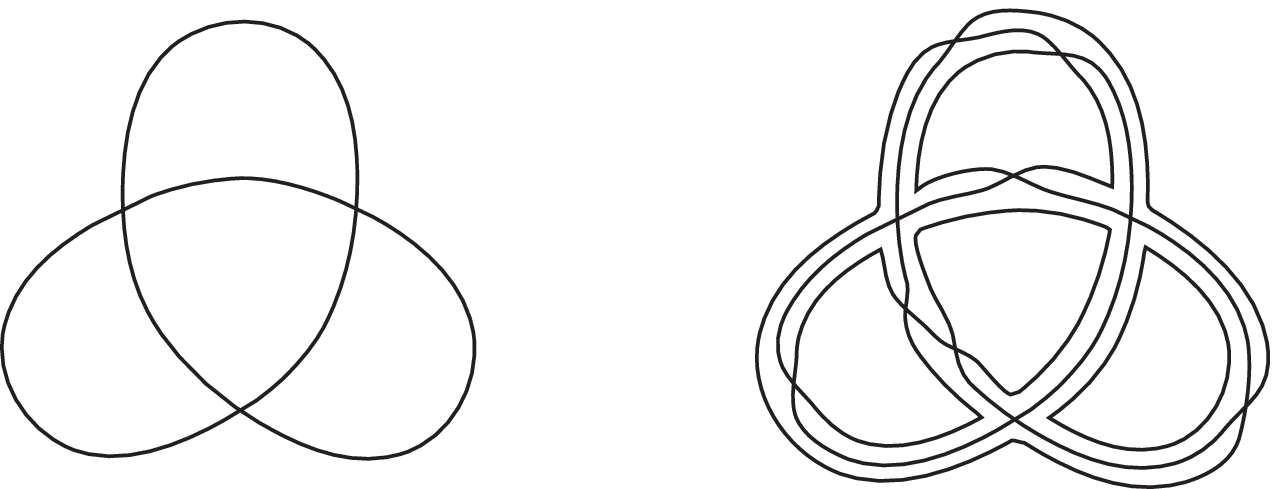}
\begin{picture}(400,0)(0,0)
\put(38,0){(i)}
\put(175,0){(ii)}
\end{picture}
\end{minipage}
\caption{An example of a diagram of a special shadow.}
\label{fig:example_shadow_diagram}
\end{center}
\end{figure}

The above decomposition of a neighborhood of the singular part of 
a special polyhedron into blocks 
allows us to enumerate all special polyhedron with a given number of vertices systematically. 
The table in Appendix \ref{sec:Table of special polyhedron of complexity up to 2} lists 
all the special polyhedra with 1 or 2 vertices. 
Note that the special polyhedron $n^{i}_j$ 
in the Appendix \ref{sec:Table of special polyhedron of complexity up to 2}  
has $n$ vertices and $i$ regions. 
The table in Appendix \ref{subsec:Special polyhedra with 1 vertices} was already given in 
\cite{KMN18} with different names of polyhedra. 
The special polyhedra $1^1_1$, $1^1_2$, $1^2_1$, $1^2_2$, $1^2_3$, $1^2_4$, $1^2_5$, $1^3_1$, $1^3_2$, $1^3_3$, $1^4_1$ 
in this paper are $X_1, X_2 , \ldots, X_{11}$ in \cite{KMN18}, respectively.

%\subsection{Tabulation of special polyhedra}
%\label{subsec:Tabulation of special polyhedra}

\subsection{From special shadows to Kirby diagrams}
\label{subsec:From special shadows to Kirby diagrams}

Let $X$ be a special shadow of a 4-manifold $M$. 
We can construct a Kirby diagram of $M$ as follows. 
Draw a diagram of $X$ as explained in 
Subsection \ref{subsec:Diagrams of special polyhedra}. 
The diagram can be regarded as immersed circles on $\Real^2$ 
with normal crossings.
At each crossing, choose over/under crossings in an arbitrary way. 
Choose a maximal tree $T$ of the graph $S(X)$. 
Encircle with a dotted circle the 3 strands of the diagram corresponding to 
each edge of $S(X)$ not contained in $T$.

\begin{figure}[htbp]
\begin{center}
\begin{minipage}{9cm}
\includegraphics[width=9cm,clip]{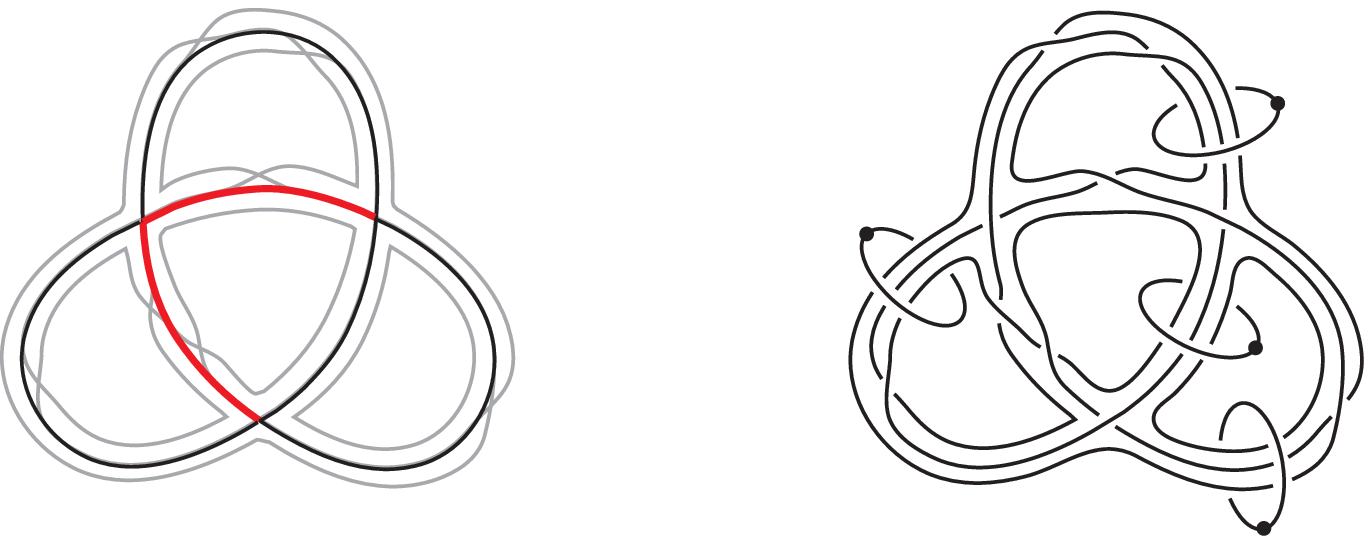}
\begin{picture}(400,0)(0,0)
\put(42,0){(i)}
\put(201,0){(ii)}
\put(36,64){\color{red} $T$}
\end{picture}
\end{minipage}
\caption{From a diagram of a shadow to a Kirby diagram.}
\label{fig:shadow_to_kirby}
\end{center}
\end{figure}

Let $X$ be a special polyhedron with $m$ regions. 
For each region $R$ of $X$, let $\iota_R : R \hookrightarrow M$ be the inclusion.  
%Let $\bar{R}$ be the compact surface whose interior is diffeomorphic to $R$.  
Let $\bar{R}$ be the metric completion of $R$ with the path metric inherited from a Riemanian metric on $R$. 
Let $\bar{\iota}_R : \bar{R} \to M$ be the natural extension of $\iota$. 
Let $T$ be a maximal tree in $S(X)$.  

\begin{definition}
We say that $X$ admits $k$ $(\leq m)$ {\it canceling pairs} (with respect to $T$) 
if there exist
\begin{itemize} 
\item
an ordered subset $\{ e_1, \ldots, e_k \}$ of the edges of $S(X)$ not contained in $T$; and 
\item
an ordered subset $\{ R_1, \ldots, R_k \}$ of the regions of $X$  
\end{itemize} 
such that for each $i \in \{ 1, \ldots, k \}$, 
$\partial \bar{R}_i$ passes through $e_i$ exactly once, and 
$\partial \bar{R}_i$ does not pass through $e_j$ (for $i < j \leq k$). 
\end{definition}
Remark that if a special polyhedron $X$ admits $k$ canceling pairs with respect to $T$ then 
a Kirby diagram obtained from $X$ by using $T$ as above 
admits $k$ canceling pairs of $1$- and $2$-handles (a dotted circle and a component of the framed link). 

\subsection{Graphs encoding simple polyhedra without vertices}
\label{subsec:Graphs encoding simple polyhedra without vertices}

Let $Y$ be the cone over 3 points. 
We denote by $Y_{111}$, $Y_{12}$, $Y_3$ the three 
$Y$-bundles over $S^1$ such that 
$\# \partial Y_{111} = 3$, 
$\# \partial Y_{12} = 2$, 
$\# \partial Y_{3} = 1$.

Every simple polyhedron $X$ whose singular set is a disjoint union of circles is decomposed into pieces 
each homeomorphic to 
a disk $D^2$, a pair of pants $P$, a M\"obius band $Y_2$, $Y_{111}$, $Y_{12}$ or $Y_3$. 
Such a decomposition induces a graph 
having one vertex for each piece or a component of $\partial X$ as shown in 
Figure \ref{fig:graphic_encoding}, 
and one edge for each adjacent pieces. 
\begin{figure}[htbp]
\begin{center}
\begin{minipage}{12cm}
\includegraphics[width=12cm,clip]{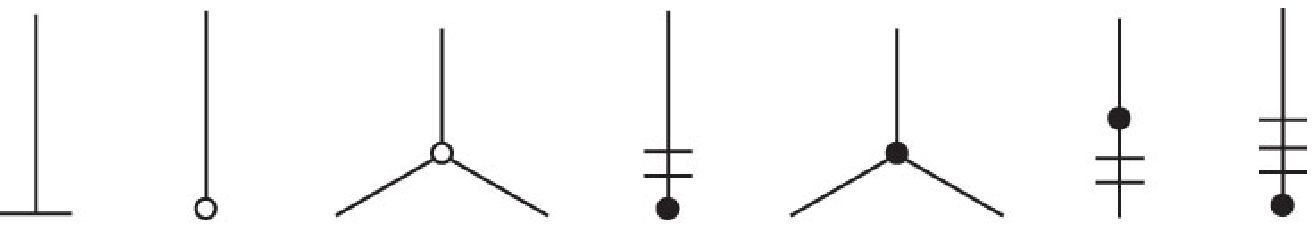}
\begin{picture}(400,0)(0,0)
\put(2,0){(B)}
\put(45,0){(D)}
\put(108,0){(P)}
\put(167,0){(2)}
\put(221,0){(111)}
\put(283,0){(12)}
\put(327,0){(3)}
\end{picture}
\end{minipage}
\caption{Vertices of a graph encoding the pieces 
$B$, $D^2$, $P$, $Y_2$, $Y_{111}$, $Y_{12}$ and $Y_3$
of a simple polyhedron, where $B$ is a component of the boundary.}
\label{fig:graphic_encoding}
\end{center}
\end{figure}
This graph is introduced by Martelli in \cite{Mar11} for the classification of closed 4-manifold 
with shadow-complexity 0. 
Let $G$ be a graph encoding $X$. 
We note that there is a natural (but not unique) embedding $G \hookrightarrow X$ such that 
$G$ is a retract of $X$. 
In particular, we have the following: 
\begin{lemma}
\label{lem:retraction}
Let $G$ be a graph encoding a simple polyhedron $X$ whose singular set is a disjoint union of circles. 
Then a natural embedding $G \hookrightarrow X$ induces an injection of 
the fundamental groups. 
\end{lemma}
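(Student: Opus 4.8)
The plan is to show that the natural embedding $\iota : G \hookrightarrow X$ admits a retraction $r : X \to G$, and then to invoke the fact that a retraction forces $\iota_* : \pi_1(G) \to \pi_1(X)$ to be injective (since $r_* \circ \iota_* = \id$). So the real content is to construct, piece by piece, a deformation retraction of each local model of $X$ onto the corresponding subgraph of $G$, and to check that these local retractions can be glued along the circles of $S(X)$ into a globally well-defined retraction.

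First I would fix the decomposition of $X$ into the pieces listed before the statement, namely copies of $D^2$, the pair of pants $P$, the M\"obius band $Y_2$, and the three $Y$-bundles $Y_{111}$, $Y_{12}$, $Y_3$, together with the boundary components $B$ of $\partial X$. Recall that these pieces meet only along the circle components of $S(X)$. The graph $G$ has one vertex per piece (and per boundary component) and one edge per adjacent pair of pieces, so an edge of $G$ sits across each singular circle joining the two incident pieces, and an edge also joins each $Y$-bundle vertex to the appropriate boundary vertices. I would place the vertex of $G$ corresponding to a piece at an interior point of that piece and route each incident edge of $G$ out to the relevant singular circle (or boundary circle), as indicated in Figure \ref{fig:graphic_encoding}; this is the natural embedding $G \hookrightarrow X$.

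Next I would build the retraction locally. On a disk piece $D^2$ the retraction collapses the disk onto a single spoke ending at the centre vertex; on $P$, $Y_2$, and the $Y$-bundles, each deformation-retracts onto a spine that is exactly the embedded subgraph of $G$ inside that piece (a pair of pants retracts onto a wedge/theta-type graph, a M\"obius band onto its core circle, and each $Y$-bundle onto a graph with the prescribed incident edges). The key compatibility point is that along each singular circle $C$ of $S(X)$ all the incident pieces must retract in a way that agrees on $C$: I would arrange each local retraction so that it restricts on a collar of $C$ to the same map onto the midpoint of the edge of $G$ crossing $C$, which guarantees the pieces glue to a continuous global map $r : X \to G$ with $r \circ \iota \simeq \id_G$. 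Because these pieces are glued only along $1$-dimensional singular circles, the gluing is a routine application of the pasting lemma once the boundary behaviour is normalized.

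The main obstacle I expect is not the existence of a retraction on each individual piece, which is standard, but verifying that $G$ as drawn in Figure \ref{fig:graphic_encoding} really is a \emph{retract} of $X$ rather than merely a subcomplex onto which each piece separately collapses: one must make sure the local retractions are simultaneously compatible along every shared singular circle and that the edges of $G$ meeting at a single vertex (interior to a piece with several boundary circles, such as $P$ or $Y_{111}$) are joined consistently. Once a global retraction $r$ is exhibited, injectivity of $\iota_*$ is immediate from functoriality, $r_* \iota_* = (r\iota)_* = \id$, which forces $\iota_*$ to be injective. I would therefore spend most of the write-up on the gluing compatibility and relegate the final group-theoretic step to a single line.
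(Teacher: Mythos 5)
Your overall skeleton---exhibit the natural embedding $\iota : G \hookrightarrow X$ as admitting a retraction $r : X \to G$, then conclude from $r_* \circ \iota_* = \mathrm{id}$ that $\iota_*$ is injective---is precisely the paper's argument: the paper simply notes that the natural embedding makes $G$ a retract of $X$ and states the lemma as a consequence. The problem is in how you justify the local step. The subgraph of $G$ lying inside a single piece is a \emph{star}: the vertex of that piece together with one half-edge running out to each attaching circle; in particular it is contractible. A pair of pants, a M\"obius band, and the $Y$-bundles $Y_{111}$, $Y_{12}$, $Y_3$ do \emph{not} deformation retract onto this star---they are not even homotopy equivalent to it, since each of these pieces has nontrivial fundamental group. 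Correspondingly, the spines you name in the parenthesis (a wedge/theta-type graph for $P$, the core circle for $Y_2$) are \emph{not} the subgraphs of $G$ inside those pieces: $G$ contains no cycle lying in a single piece. So the sentence ``each deformation-retracts onto a spine that is exactly the embedded subgraph of $G$ inside that piece'' is false, and it is the step your construction rests on. You also locate the difficulty in the wrong place: once each piece retracts onto its star sending each attaching circle to the corresponding endpoint, the gluing is the trivial part (all pieces send a shared circle to the same edge-midpoint, so the pasting lemma applies); the genuine content is the existence of those local retractions with the prescribed \emph{constant} behavior on the attaching circles, which is exactly what a deformation retraction cannot provide for these pieces.

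The local step is nevertheless true; it just needs a different argument, because a retraction need not be a deformation retraction. For a piece $Q$ with star $T_Q$, a map constant on each attaching circle factors through the quotient $\hat{Q}$ of $Q$ obtained by collapsing each attaching circle to a point (for example $\hat{Q} \cong S^2$ when $Q = P$, and $\hat{Q}$ is the projective plane when $Q = Y_2$), and $T_Q$ maps homeomorphically onto a subcomplex $\hat{T}_Q \subset \hat{Q}$. Now invoke the standard fact that a contractible subcomplex $A$ of a CW complex $Z$ is a retract of $Z$: the identity of $A$ is homotopic to a constant map, the constant map extends over $Z$, and the homotopy extension property then extends $\mathrm{id}_A$ to a map $Z \to A$. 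Applying this to $\hat{T}_Q \subset \hat{Q}$ and composing with the quotient map $Q \to \hat{Q}$ gives a retraction $Q \to T_Q$ that is constant on each attaching circle. With these local retractions in hand, your gluing step is indeed routine, and the final line $r_* \circ \iota_* = \mathrm{id}$ (or $r \circ \iota \simeq \mathrm{id}_G$, which also suffices) completes the proof exactly as you say.
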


As was mentioned in \cite{Mar11}, the simple polyhedron $X$ is recovered from a pair of 
a graph $G$ and a map $\beta : H_1(G; \Integer / 2 \Integer) \to \Integer / 2 \Integer$, 
which describes each cycle in $G$ is orientation-preserving or not after naturally embedding 
$G$ into $X$ (this property does not depend of the choice of embedding).

%\section{Acyclic simple polyhedra}
%\label{sec:Acyclic simple polyhedra}

\section{Main Theorem}
\label{sec:Main Theorem}

We prove the following. 

\begin{theorem}
\label{thm:acyclic 4-manifold of connected shadow-complexity at most 2}
Every acyclic $4$-manifold of connected shadow-complexity at most $2$  
with boundary the $3$-sphere is diffeomorphic to the standard $4$-ball. 
In other words, there exists no acyclic $4$-manifold of connected shadow-complexity $1$ or $2$  
with boundary the $3$-sphere. 
\end{theorem}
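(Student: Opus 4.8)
The plan is to reduce the theorem to a finite check over a small, completely enumerated list of candidate shadows, and then rule out every candidate by a homological and combinatorial argument. By Lemma \ref{lem:non-collapsible polyhedra}, an acyclic $4$-manifold $M$ of connected shadow-complexity $n\in\{1,2\}$ admits a \emph{closed} shadow $X$ of connected complexity exactly $n$. Since $\partial M\cong S^3$, the shadow $X$ is closed (so $\partial X=\emptyset$), which by the remark in Section \ref{sec:Shadows} forces $X$ to be special whenever it has a vertex and $S(X)$ is connected. Thus I would split into cases according to whether $S(X)$ is connected or not, and whether $X$ is special or has vertex-free components. The key structural input is that connected complexity controls the number of true vertices \emph{per component} of $S(X)$, so each singular component carries at most $2$ vertices; this is exactly what makes the enumeration in the Appendix (the special polyhedra $1^i_j$ and $2^i_j$ with $1$ or $2$ vertices) applicable to each piece.

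First I would handle the simplest contributions to $H_*(X)$. Because $M$ collapses onto $X$, we have $H_*(M;R)\cong H_*(X;R)$ for every coefficient ring $R$, so acyclicity of $M$ means $X$ is acyclic: $H_0(X)=\Integer$ and $\tilde H_i(X)=0$ for all $i$, over $\Integer$ and hence over $\Integer/2\Integer$ as well. Next I would exploit the gleam data and Turaev's reconstruction: from a shadowed polyhedron $(X,\gl)$ one recovers $M$ and its handle decomposition, and the first homology of $M$ is computed from an intersection-type matrix built out of the gleams and the combinatorics of the regions (the linking/gleam matrix playing the role of the linking matrix of the associated Kirby diagram). The condition $H_1(M)=H_2(M)=0$ then becomes the statement that this square integer matrix is unimodular with the right rank, which is a strong arithmetic constraint. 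I would combine this with the canceling-pairs mechanism of Section \ref{subsec:From special shadows to Kirby diagrams}: if $X$ admits $k$ canceling pairs with respect to a maximal tree $T$, the resulting Kirby diagram has $k$ geometrically canceling $1$--$2$ handle pairs, so $M$ is built from fewer handles; iterating this should reduce each candidate all the way down to the $0$-handle, i.e. to $D^4$.

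The main engine, therefore, is to show that every acyclic special polyhedron on the enumerated list admits enough canceling pairs to be handle-trivial. Concretely, for each $X$ in the table with $1$ or $2$ vertices I would: (i) write down the finitely many possible gleam assignments compatible with $\partial X=\emptyset$; (ii) impose acyclicity to cut the list down drastically (many purely topological candidates already fail $H_*(X)=0$ regardless of gleams, via Lemma \ref{lem:retraction}, since a surviving $\pi_1$ injection or a nonzero $H_1(X;\Integer/2\Integer)$ obstructs acyclicity); and (iii) for the survivors, produce the requisite canceling pairs and apply the reduction. The disconnected-$S(X)$ case and the case where $Y$ in Lemma \ref{lem:non-collapsible polyhedra} contains vertex-free simple pieces would be treated by the graph-encoding language of Section \ref{subsec:Graphs encoding simple polyhedra without vertices}: a vertex-free simple polyhedron is governed by a graph $G$ together with $\beta\colon H_1(G;\Integer/2\Integer)\to\Integer/2\Integer$, and acyclicity again forces $G$ to be a tree with trivial orientation data, which one checks cannot support a nontrivial acyclic closed piece.

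The hard part will be step (iii): verifying, uniformly across the table, that acyclicity actually \emph{forces} the existence of a canceling pair rather than merely permitting it, and that after canceling one pair the remaining polyhedron is still on a manageable (lower-complexity) list so the induction closes. There is a genuine risk that some acyclic candidate resists an obvious cancellation and requires a direct diffeomorphism argument or an explicit Kirby-calculus move (handle slides governed by the gleams) to trivialize; isolating and dispatching those recalcitrant cases is where the real work lies. This is precisely the point at which the general sufficient condition promised as Theorem \ref{thm:general setting} should be invoked: I expect that theorem to package exactly the ``enough canceling pairs $\Rightarrow$ standard $D^4$'' implication, so that the proof of the present statement becomes the finite verification that each connected-complexity-$\le 2$ acyclic shadow meets its hypotheses.
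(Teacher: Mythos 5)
Your top-level skeleton (pass to a closed shadow via Lemma \ref{lem:non-collapsible polyhedra}, enumerate acyclic special polyhedra with at most two vertices, produce canceling pairs, and let Theorem \ref{thm:general setting} package the conclusion) does match the paper, but two of your central claims are wrong, and they sit exactly where the real content lies. First, you assert that closedness of $X$ plus connectedness of $S(X)$ forces $X$ to be special, citing the remark in Section \ref{sec:Shadows}; you have that remark backwards. It says that a special polyhedron (other than $D^2$) is closed with connected singular set, not the converse. A closed acyclic simple polyhedron with vertices need not be special: Lemma \ref{lem:Regions are planar} only makes its regions spheres with holes, and the paper notes there are infinitely many closed acyclic simple polyhedra with any given number of vertices. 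So the table of polyhedra $1^2_i$, $2^3_i$ does not apply to $X$ itself. The actual reduction --- the hardest part of the paper --- is to take $X' = \Nbd(S';X)$ for each singular component $S'$, cap off the boundary circles of $X'$ by disks to get a special polyhedron $\hat{X}'$, check that $\hat{X}'$ is \emph{still acyclic} (so that the enumeration applies to $\hat{X}'$, not to $X$), and then reassemble $M$: the vertex-free acyclic pieces $A_i$ satisfy $M_{A_i} \cong D^4$ by Theorem \ref{thm:Costantino and Naoe} (\ref{thm:acyclic simple polyhedron of complexity 0}), and they are glued back in using Lemmas \ref{lem:connected sum of framed knots} and \ref{lem:gluing formula for shadows}, which replace each framed link component $L_i$ of the Kirby diagram of $\hat{X}'$ by a connected sum $L_i \# K_i$ with an unknown knot $K_i$. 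Your graph-encoding remarks about vertex-free pieces do not substitute for this gluing step, and this is precisely what the cancellation condition and the tree argument in the proof of Theorem \ref{thm:general setting} are built to handle.

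Second, your endgame is false as stated: canceling pairs cannot take you ``all the way down to the $0$-handle.'' An acyclic special polyhedron with $n$ vertices has $n+1$ regions and gives a diagram with $n+1$ dotted circles, so $n$ cancellations leave one framed knot and one dotted circle --- and after the gluing above that knot is of the form $L_i \# K_i$ with $K_i$ arbitrary, so no further combinatorial cancellation is available. The conclusion $M \cong D^4$ at this last step requires Gabai's Property R theorem (Theorem \ref{thm:Property R}) together with the hypothesis $\partial M \cong S^3$: this is what the paper itself names as the key ingredient, and it is entirely absent from your proposal. Relatedly, your gleam/linking-matrix unimodularity idea is a red herring; gleams never enter the paper's argument (they affect $M$, not $H_*(X)$, and the framing ambiguity is exactly what Property R absorbs), so no arithmetic constraint on gleams is needed or used.
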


The key ingredients in the proof are Property R theorem by Gabai \cite{Gab87} and 
detailed analyses of acyclic polyhedra.  
In fact we prove the same thing as above in a more general setting in Theorem \ref{thm:general setting}. 
We note that the nature of acyclic polyhedra with vertices are completely different from those 
without vertices. 
In \cite{Nao17} it was shown that every acyclic simple polyhedron without vertices collapses onto $D^2$. 
In contrast, as we will see in the 
following arguments, there exists infinitely many closed acyclic simple polyhedra for a given number of vertices. 

\subsection{Acyclic special polyhedra}
\label{subsec:Acyclic special polyhedra}

In this subsection, we focus on special polyhedra, and 
prove a special case 
(Lemma \ref{lem:acyclic 4-manifold of special shadow-complexity at most 2}) 
of Theorem \ref{thm:acyclic 4-manifold of connected shadow-complexity at most 2}.

\begin{lemma}
\label{lem:number of regions of acyclic special polyhedra}
Let $X$ be an acyclic special polyhedron with $n$ vertices. 
Then the number of regions of $X$ is exactly $n+1$. 
\end{lemma}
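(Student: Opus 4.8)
The plan is to compute the Euler characteristic of $X$ in two different ways and compare. Since $X$ is acyclic, we have $\chi(X) = 1$ (the reduced homology vanishes, so only $H_0$ contributes). On the other hand, $X$ is a special polyhedron with a cell structure coming from its stratification: the vertices $V(X)$, the edges of $S(X)$, and the regions $R(X)$ form a CW decomposition of $X$. The key combinatorial fact is that the singular set $S(X)$ of a special polyhedron is a $4$-regular graph, so if $X$ has $n$ vertices, then $S(X)$ has $n$ vertices each of degree $4$, giving $4n/2 = 2n$ edges. Therefore $\chi(X) = n - 2n + |R(X)| = -n + |R(X)|$.

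\medskip

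The first thing I would do is record these two computations and set them equal: $1 = -n + |R(X)|$, which immediately yields $|R(X)| = n + 1$. The main point to justify carefully is that the natural stratification of a special polyhedron genuinely gives a CW complex whose $0$-cells, $1$-cells, and $2$-cells are exactly the vertices, the edges of $S(X)$, and the regions, respectively. This uses the hypothesis that $X$ is special: each region is simply connected (in fact an open disk, since $X \not\cong D^2$ forces $X$ to be closed with connected singular set by the remark in Section \ref{sec:Shadows}), so each region is a genuine $2$-cell, and $\partial X = \emptyset$ means there are no boundary strata to worry about. The edges of $S(X)$ are open arcs attached along vertices, hence $1$-cells.

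\medskip

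Next I would verify the edge count. Because $X$ is special with at least one vertex, $S(X)$ is a connected $4$-regular graph (each vertex has the local model of Figure \ref{fig:simple_polyhedron}(iii), whose singular set is the cone over $4$ points, so four edge-ends meet at each vertex). The handshake lemma then gives the number of edges as $2n$. I would state this as the one genuinely model-dependent input and cite the local structure of vertices.

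\medskip

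The step I expect to be the main (though modest) obstacle is making the Euler characteristic computation rigorous rather than heuristic: one must confirm that the attaching maps of the region $2$-cells are well behaved enough that the stratification is an honest CW structure, so that $\chi$ can be read off as the alternating sum of cell counts. Once that is in place the arithmetic $\chi(X) = n - 2n + |R(X)| = 1$ is immediate, and the conclusion $|R(X)| = n+1$ follows. No deep topology is needed beyond acyclicity forcing $\chi = 1$; the whole argument is a careful bookkeeping of the cell structure.
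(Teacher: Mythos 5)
Your proposal is correct and follows essentially the same route as the paper: acyclicity gives $\chi(X)=1$, the $4$-regularity of $S(X)$ gives $2n$ edges, and the alternating sum $n-2n+\#R(X)=1$ yields $\#R(X)=n+1$. The extra care you take in justifying the CW structure (regions being open disks since $X$ is special and closed) is a reasonable elaboration of what the paper leaves implicit, but it is not a different argument.
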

\begin{proof}
Since $X$ is acyclic, the Euler characteristic $\chi (X)$ is one. 
The number of edges of $X$ is $2n$ as $S(X)$ is a $4$-regular graph. 
Thus, by the assumption that $X$ is special, we have 
\[ 1 = \chi (X) = \# V(X)  - \# E(X) + \# R(X) = n - 2n + \# R(X), \]
which implies $\# R(X) = n + 1$. 
\end{proof}

\begin{lemma}
\label{lem:acyclic simply-connected special polyhedron}
An acyclic 
%, simply-connected 
special polyhedron with 
 at most $2$ vertices is one of 
$1^2_i$ $(i=1,2)$ and 
$2^3_i$ $(i=1,2, \ldots, 16)$. 
\end{lemma}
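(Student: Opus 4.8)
The plan is to combine the Euler-characteristic count from Lemma \ref{lem:number of regions of acyclic special polyhedra} with the enumeration of all special polyhedra of complexity at most $2$ that is available from the block decomposition of Subsection \ref{subsec:Diagrams of special polyhedra} and tabulated in the Appendix. Lemma \ref{lem:number of regions of acyclic special polyhedra} tells us that an acyclic special polyhedron with $n$ vertices has exactly $n+1$ regions. For $n=1$ this forces the number of regions to be $2$, so the candidate must be one of the $1^2_i$ in the table, and for $n=2$ it forces $3$ regions, so the candidate must be one of the $2^3_i$. Thus acyclicity immediately prunes the full list down to the polyhedra $1^2_i$ and $2^3_i$; what remains is to decide which of these are actually acyclic.

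First I would recall that a special polyhedron with at least one vertex is closed and has connected singular set, so its homology is determined purely by the combinatorics recorded in the diagram. The natural way to compute $H_*(X)$ is through the cellular chain complex associated to the CW structure whose $0$-cells are the vertices $V(X)$, whose $1$-cells are the edges of $S(X)$, and whose $2$-cells are the regions $R(X)$. Acyclicity (reduced homology vanishing) is equivalent to $H_1(X)=0$ together with $H_2(X)=0$; since $\chi(X)=1$ is already guaranteed by the region count, the task reduces to checking that the boundary maps $\partial_2 : C_2 \to C_1$ and $\partial_1 : C_1 \to C_0$ give trivial $H_1$ and $H_2$. Concretely, $H_2(X)=0$ amounts to $\partial_2$ being injective, and $H_1(X)=0$ amounts to the image of $\partial_2$ together with the image of $\partial_1$ spanning the appropriate subspace; these are finite linear-algebra checks, one per candidate polyhedron.

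The main obstacle I anticipate is bookkeeping rather than conceptual: I must read off, for each of the finitely many candidates $1^2_i$ and $2^3_i$, exactly how the boundary of each region traverses the edges of $S(X)$ (with multiplicity and sign), and then verify acyclicity of the resulting small chain complex. The sign data depends on the orientability of the regions along their boundary, i.e.\ on the map $\beta$ of the final paragraph of Subsection \ref{subsec:Graphs encoding simple polyhedra without vertices}; however for special polyhedra the incidence pattern is fully encoded by the diagram, so each computation is mechanical. I would organize this by passing from each diagram to its attaching data, write down the $\partial_2$ matrix over $\Integer$, and confirm that its Smith normal form has trivial cokernel in the relevant degree precisely for the listed indices.

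Since the statement asserts that \emph{every} acyclic special polyhedron with at most two vertices is one of the listed ones, strictly speaking I only need the inclusion ``acyclic $\Rightarrow$ on the list,'' which follows at once from the region count combined with the enumeration; verifying that the listed polyhedra are themselves acyclic is the natural companion check confirming the table is sharp. I would therefore present the argument in two short steps: (1) invoke Lemma \ref{lem:number of regions of acyclic special polyhedra} to restrict to $n+1$ regions, cutting the table down to the $1^2_i$ and $2^3_i$ entries; and (2) note that all remaining entries on the table with a different region count are non-acyclic by the same Euler-characteristic obstruction, so no acyclic special polyhedron of complexity $\le 2$ lies outside the stated family.
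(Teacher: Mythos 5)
Your middle argument --- restrict to candidates with $n+1$ regions via Lemma \ref{lem:number of regions of acyclic special polyhedra}, then decide acyclicity of each surviving candidate by an explicit cellular homology computation over $\Integer$ (so that torsion is detected, e.g.\ via Smith normal form) --- is exactly what the paper's one-line proof means by ``easily checked by Lemma \ref{lem:number of regions of acyclic special polyhedra} and the table''. Had you stopped there, your proposal would be correct and essentially the same as the paper's.

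The problem is your closing paragraph, where you retract that computation and claim the inclusion ``acyclic $\Rightarrow$ on the list'' follows at once from the region count plus the enumeration. It does not. The table in Appendix \ref{subsec:Special polyhedra with 1 vertices} contains \emph{five} special polyhedra with one vertex and two regions ($1^2_1,\ldots,1^2_5$), and Appendix \ref{subsec:Special polyhedra with 2 vertices and 3 regions} contains \emph{forty-eight} with two vertices and three regions ($2^3_1,\ldots,2^3_{48}$), whereas the lemma asserts that an acyclic polyhedron must be among the first two, respectively the first sixteen, of these. All $53$ candidates with the correct region count pass the Euler-characteristic test: by the very computation in Lemma \ref{lem:number of regions of acyclic special polyhedra}, $n$ vertices, $2n$ edges and $n+1$ regions force $\chi=1$ whether or not the polyhedron is acyclic. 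So your step (2) --- ``all remaining entries on the table with a different region count are non-acyclic by the same Euler-characteristic obstruction'' --- only disposes of entries that Lemma \ref{lem:number of regions of acyclic special polyhedra} had already excluded, and says nothing about $1^2_3$, $1^2_4$, $1^2_5$ or $2^3_{17},\ldots,2^3_{48}$. Those must be ruled out by an actual homology computation, because a connected $2$-complex with $\chi=1$ can fail to be acyclic, either with $b_1=b_2>0$ or with $H_1$ a nontrivial finite group (attaching a $2$-cell to a circle by a degree-$3$ map gives $\chi=1$ and $H_1\cong\Integer/3\Integer$). Hence the per-candidate check you describe in the middle of your proposal is not an optional ``companion check confirming the table is sharp''; it is the substance of the proof, and the shortened two-step version you say you would actually present has a genuine gap.
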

\begin{proof}
This is easily checked by Lemma 
\ref{lem:number of regions of acyclic special polyhedra} and 
the table in Appendixes 
\ref{subsec:Special polyhedra with 1 vertices} and  
\ref{subsec:Special polyhedra with 2 vertices and 3 regions}. 
\end{proof}

Recall the following famous Property R theorem by Gabai \cite{Gab87}. 

\begin{theorem}[Gabai \cite{Gab87}]
\label{thm:Property R}
Any $3$-manifold obtained by $0$-surgery on a nontrivial knot in $S^3$ is irreducible.
In particular, non-trivial surgery on a non-trivial knot in $S^3$ does knot yield $S^2 \times S^1$. 
\end{theorem}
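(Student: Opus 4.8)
The plan is to prove Theorem~\ref{thm:Property R} (Property~R) by appealing to the sutured manifold / foliation machinery that Gabai developed, since this is in fact Gabai's celebrated theorem and we should reconstruct the shape of his argument rather than invent a new one. First I would reduce the statement to its core. The ``in particular'' clause follows immediately once we know irreducibility: if $0$-surgery on a nontrivial knot $K \subset S^3$ yielded $S^2 \times S^1$, then the resulting manifold would be reducible (it contains an essential $S^2$), contradicting the irreducibility assertion. So the real content is the first sentence, and I would concentrate entirely on proving that the manifold $S^3_0(K)$ obtained by $0$-framed surgery on a nontrivial knot $K$ is irreducible.

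The heart of the approach is to construct a taut foliation, or more precisely to use the existence of a taut (tight) sutured manifold hierarchy, on the knot complement and to show it extends across the surgery solid torus in a way that certifies irreducibility. Concretely, I would let $N = S^3 \setminus \Int\Nbd(K)$ be the knot exterior and consider a Seifert surface $S$ of minimal genus for $K$. Because $K$ is nontrivial, $S$ has genus at least one, and one knows that $N$ is irreducible with incompressible boundary. The key step is to put $N$ into the form of a taut sutured manifold $(N, \gamma)$ where the suture is the boundary of $S$, and then invoke Gabai's theorem that a taut sutured manifold admits a sutured manifold hierarchy, which in turn produces a taut foliation $\mathcal{F}$ of $N$ transverse (or tangent, as arranged) to $\partial N$. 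The delicate point is arranging the boundary behaviour of this foliation so that the $0$-surgery curve, i.e. the longitude $\lambda = \partial S$, is a leaf of the induced foliation on $\partial N$; this is exactly what lets us glue in the meridian disk of the surgery solid torus and extend $\mathcal{F}$ to a taut foliation $\widehat{\mathcal{F}}$ of the closed-up manifold $M = S^3_0(K)$ with the capped-off Seifert surface as a closed leaf.

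Once a taut foliation $\widehat{\mathcal{F}}$ on $M$ exists, irreducibility is essentially automatic by the Novikov--Rosenberg type theory: a closed $3$-manifold admitting a taut foliation is irreducible (every embedded $2$-sphere bounds a ball), because a taut foliation has no Reeb components and the universal cover is $\Real^3$, so there is no essential sphere. I would therefore close the argument by citing that a taut foliation forces $M$ to be irreducible, and additionally note that the capped Seifert surface $\widehat{S}$ becomes a genus $\geq 1$ closed leaf, so $M$ is not $S^2 \times S^1$ (whose only taut foliation would have sphere leaves, incompatible with a higher-genus leaf). This simultaneously recovers both sentences of the statement.

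The main obstacle, and the genuinely hard technical content, is the construction of the sutured manifold hierarchy with controlled boundary and the proof that the resulting foliation is taut and has the longitude as a boundary leaf; this is precisely the difficult decomposition-theory heart of Gabai's work and is not something one can shortcut. I would not attempt to reprove the existence of taut sutured hierarchies from scratch, but rather structure the proof so that this deep input is isolated as a single invoked theorem, with the genus-minimality of $S$ supplying the tautness of the initial sutured manifold and the disk-busting/norm-minimizing properties ensuring the hierarchy does not terminate prematurely. In a self-contained write-up I would flag that the surgery-curve-as-leaf condition depends on a careful analysis of how the foliation meets $\partial N$ along the annular sutures, and that handling the case where $K$ is a satellite or cable (so that the minimal genus surface interacts nontrivially with essential tori) is where most of the casework lives.
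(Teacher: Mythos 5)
This statement is not proved in the paper at all: it is stated as a quoted external result of Gabai \cite{Gab87}, so the paper's ``proof'' is simply the citation, and there is no internal argument to compare yours against. The right question is therefore whether your sketch faithfully reconstructs Gabai's argument, and in outline it does. The chain you describe --- take a minimal genus Seifert surface $S$ of the nontrivial knot $K$, regard the exterior as a taut sutured manifold, invoke Gabai's existence theorem for sutured manifold hierarchies to produce a taut foliation of the exterior inducing the foliation by longitudes on the boundary torus, cap off with meridian disks of the surgery solid torus to get a taut foliation of $S^3_0(K)$ having the capped surface $\widehat{S}$ of genus $\geq 1$ as a leaf, and conclude irreducibility from Novikov--Rosenberg theory --- is exactly the structure of Gabai's proof, and you are right to isolate the hierarchy theorem as the deep input one should cite rather than reprove. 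Two corrections are in order. First, the ``in particular'' clause concerns an \emph{arbitrary} non-trivial surgery slope, so before your reduction applies you need the (easy but necessary) homological observation that $r$-surgery on a knot in $S^3$ has first homology $\Integer/|p|\Integer$ for $r=p/q$, hence can only be $S^2 \times S^1$ when $r=0$; your write-up silently assumes the slope is $0$. Second, your closing claim that satellite or cable knots require separate casework is not accurate: the sutured manifold machinery treats all nontrivial knots uniformly, the only input being that $S$ realizes the genus (is norm-minimizing), and no such case division occurs in Gabai's proof.
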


Theorem \ref{thm:Property R} implies that 
a Kirby diagram with a (framed) knot and a dotted circle 
of a 4-manifold $M$ with $\partial M \cong S^3$ is 
nothing but the one shown in Figure \ref{fig:property_R}. 
In particular, $M$ is diffeomorphic to $D^4$. 
\begin{figure}[htbp]
\begin{center}
\begin{minipage}{3cm}
\includegraphics[width=3cm,clip]{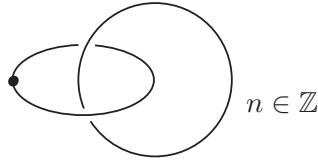}
\begin{picture}(400,0)(0,0)
\put(90,30){$n\in\Integer$}
\end{picture}
\end{minipage}
\caption{A Kirby diagram of $D^4$.}
\label{fig:property_R}
\end{center}
\end{figure}
The following is the direct consequence of Theorem \ref{thm:Property R}. 
\begin{lemma}
\label{lem:a sufficient condition to be the disk in terms of special shadow}
Let $M$ be an acyclic $4$-manifold with $\partial M \cong S^3$. 
Let $X$ be a special shadow of $M$ with $n$ vertices. 
If $X$ admits $n$ canceling pairs, $M$ is diffeomorphic to $D^4$. 
\end{lemma}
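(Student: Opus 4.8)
The plan is to reduce the statement entirely to the Property~R setup displayed in Figure~\ref{fig:property_R}, using the $n$ canceling pairs to cancel all the handles down to the pair that Gabai's theorem controls. Concretely, let $X$ be a special shadow of $M$ with $n$ vertices, so by Lemma~\ref{lem:number of regions of acyclic special polyhedra} it has $n+1$ regions. Fix a maximal tree $T$ of the $4$-regular graph $S(X)$, and form the associated Kirby diagram as in Subsection~\ref{subsec:From special shadows to Kirby diagrams}. First I would count handles. Since $S(X)$ has $n$ vertices and $2n$ edges, it has first Betti number $b_1(S(X)) = 2n - n + 1 = n+1$; hence the number of edges of $S(X)$ not in $T$ is exactly $n+1$, giving $n+1$ dotted circles ($1$-handles), while the $n+1$ regions contribute $n+1$ framed link components ($2$-handles). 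Together with the single $0$-handle, this is the full handle decomposition of $M$ read off from the special shadow.

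Next I would invoke the canceling pairs. By the Remark following the definition of canceling pairs, the hypothesis that $X$ admits $n$ canceling pairs with respect to $T$ gives $n$ geometrically canceling pairs of a $1$-handle (dotted circle) and a $2$-handle (framed knot) in the Kirby diagram. The combinatorial condition---that $\partial \bar R_i$ runs over $e_i$ once and over no $e_j$ with $j>i$---is precisely what lets one cancel these pairs in order $i=1,\dots,n$ without the later cancellations being obstructed by the earlier ones, exactly as in ordinary handle-cancellation. Carrying out these $n$ cancellations removes $n$ dotted circles and $n$ framed knots, leaving a Kirby diagram with a single $1$-handle (one dotted circle) and a single $2$-handle (one framed knot), i.e. $b_1(S(X)) - n = 1$ dotted circle and $(n+1)-n = 1$ framed link component.

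Finally I would apply Property~R. The resulting diagram presents an acyclic $4$-manifold $M$ with $\partial M \cong S^3$ built from one $0$-handle, one $1$-handle and one $2$-handle. Since $M$ is acyclic (in particular $H_1(M)=0$ and $H_2(M)=0$), the $2$-handle must algebraically cancel the $1$-handle, so the framed knot runs over the dotted circle with intersection number $\pm 1$; were its geometric intersection with the belt sphere of the $1$-handle larger, the boundary $\partial M$ obtained by the corresponding surgery description could not be $S^3$ without contradicting irreducibility. By Theorem~\ref{thm:Property R} (Property~R), the only such diagram yielding $\partial M \cong S^3$ is the one in Figure~\ref{fig:property_R}, namely a $0$-framed unknot through the dotted circle with some integer framing $n$ on the meridian, and this diagram describes $D^4$. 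Hence $M \cong D^4$.

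The main obstacle I expect is the passage in the last paragraph: arguing rigorously that after the $n$ cancellations the leftover $1$-$2$ pair is genuinely in the Gabai configuration rather than merely "one dotted circle and one framed knot." One must verify that the framed knot represents a \emph{knot} in $S^3$ (the dotted circle bounds a disk, turning the $1$-handle complement into $S^1 \times D^3$ or equivalently $D^4$ minus a trivial circle) and that acyclicity forces the $0$-framing/algebraic-cancellation condition needed to match the hypothesis of Theorem~\ref{thm:Property R}. In other words, the work is not in the handle bookkeeping but in confirming that the hypotheses of Property~R are exactly met; the counting and the ordered cancellation are then routine.
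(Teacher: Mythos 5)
Your proposal is correct and follows essentially the same route as the paper's proof: use Lemma \ref{lem:number of regions of acyclic special polyhedra} (plus the edge count of the $4$-regular graph $S(X)$) to see the Kirby diagram has $n+1$ framed link components and $n+1$ dotted circles, cancel the $n$ pairs guaranteed by the canceling-pairs hypothesis, and conclude via Theorem \ref{thm:Property R} that the remaining one-knot-one-dotted-circle diagram must be the one in Figure \ref{fig:property_R}, so $M \cong D^4$. The extra verification you flag in your final paragraph is exactly what the paper dispatches in the paragraph following Theorem \ref{thm:Property R}, so no additional work is needed beyond what you wrote.
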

\begin{proof}
By Lemma \ref{lem:number of regions of acyclic special polyhedra}
a Kirby diagram of $M$ corresponding to $X$ is 
an $(n+1)$-component (framed) link $L$ with $n+1$ dotted circles. 
If that diagram admits $n$ cancelling pairs of $1$- and $2$-handles, 
after canceling them, we get a (framed) knot $K$ with a dotted circle. 
Hence, the assertion follows from Theoerm \ref{thm:Property R}. 
\end{proof}

\begin{lemma}
\label{lem:acyclic 4-manifold of special shadow-complexity at most 2}
Every acyclic $4$-manifold $M$ of special shadow-complexity at most $2$ 
with $\partial M \cong S^3$ is diffeomorphic to $D^4$. 
\end{lemma}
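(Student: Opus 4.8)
The plan is to reduce the statement to the preceding lemmas by enumerating the possibilities allowed by the special-polyhedron classification and then verifying, in each case, that the hypothesis of Lemma \ref{lem:a sufficient condition to be the disk in terms of special shadow} is met. By Lemma \ref{lem:acyclic simply-connected special polyhedron}, an acyclic special polyhedron with at most two vertices is one of $1^2_1$, $1^2_2$, or $2^3_i$ for $i=1,\ldots,16$. (The complexity-$0$ case, where $X$ has no vertices and must be $D^2$, is trivial: then $M\cong D^4$ directly.) So the whole argument comes down to treating the finitely many polyhedra on this list.

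First I would dispose of the one-vertex cases $1^2_i$: here $n=1$, so by Lemma \ref{lem:number of regions of acyclic special polyhedra} there are $n+1=2$ regions, and I must exhibit a single canceling pair. For each of $1^2_1$ and $1^2_2$, I would choose a maximal tree $T$ in $S(X)$, locate the unique edge $e_1$ of $S(X)$ not in $T$, and find a region $R_1$ whose boundary $\partial\bar R_1$ passes through $e_1$ exactly once; reading this off the diagrams in the appendix should be immediate. Then Lemma \ref{lem:a sufficient condition to be the disk in terms of special shadow} gives $M\cong D^4$. The two-vertex cases $2^3_i$ are the bulk of the work: here $n=2$, so there are three regions, and I need to produce two canceling pairs, i.e. ordered pairs $(e_1,R_1)$, $(e_2,R_2)$ with the nested incidence condition that $\partial\bar R_i$ meets $e_i$ once and $\partial\bar R_2$ misses nothing required of $\partial\bar R_1$ but $\partial\bar R_1$ avoids $e_2$ (the ordering condition from the definition). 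For each of the sixteen polyhedra I would fix a maximal tree, identify the two non-tree edges, and exhibit suitable regions directly from the combinatorial data recorded in the appendix table.

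The main obstacle is verifying the canceling-pair condition uniformly across all sixteen $2^3_i$: the definition requires an \emph{ordering} so that $\partial\bar R_1$ passes through $e_1$ once and avoids $e_2$, while $\partial\bar R_2$ passes through $e_2$ once, and a priori it is not obvious that every acyclic special polyhedron on the list admits such a nested pair rather than merely some region through each non-tree edge. I expect this to succeed because acyclicity forces the graph $S(X)$ together with the region-boundary incidences to be rich enough: with three regions and two independent cycles in $S(X)$ (coming from the two non-tree edges), the incidence matrix of regions against non-tree edges must have full rank $2$ over $\Integer/2\Integer$, for otherwise some nontrivial homology would survive, contradicting $H_*(M)=0$ via Lemma \ref{lem:retraction} and the correspondence between regions and $2$-handles. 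From full rank one extracts, after possibly reordering the edges and a triangular elimination, the required nested pair. Thus the clean way to finish is to prove a rank lemma from acyclicity and apply it rather than to inspect all sixteen diagrams by hand.

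If the rank argument does not by itself yield the strict nesting for every case, the fallback is a short case check: since there are only sixteen polyhedra and each has exactly two non-tree edges and three regions, I would tabulate, for each $2^3_i$, the $3\times 2$ incidence matrix over $\Integer/2\Integer$ and read off a valid ordering, appealing to the appendix for the boundary combinatorics of each region. Either route concludes that every such $M$ admits the requisite canceling pairs, so Lemma \ref{lem:a sufficient condition to be the disk in terms of special shadow} applies and $M\cong D^4$, completing the proof.
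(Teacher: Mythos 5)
Your reduction to the list $1^2_1$, $1^2_2$, $2^3_1,\ldots,2^3_{16}$ via Lemma \ref{lem:acyclic simply-connected special polyhedron} and then to Lemma \ref{lem:a sufficient condition to be the disk in terms of special shadow} is exactly the paper's strategy, and your fallback (inspecting each polyhedron) is what the paper actually does. But your preferred route --- deducing the canceling pairs from a rank computation --- has a genuine gap that cannot be patched. The canceling-pair condition is \emph{geometric}: $\partial\bar{R}_i$ must pass through $e_i$ \emph{exactly once} (so the corresponding $2$-handle/$1$-handle pair cancels) and must be \emph{disjoint} from $e_j$ for $j>i$. The rank lemma you propose is in fact true: the signed incidence matrix of regions against non-tree edges is the boundary map $C_2\to C_1$ of the handle chain complex, so acyclicity of $X$ (equivalently of $M$) forces it to be unimodular over $\Integer$, hence of full rank over $\Integer/2\Integer$. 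But this only controls algebraic counts and parities: ``odd'' does not mean ``exactly once'' (a region boundary can run through an edge three times), and ``even'' does not mean ``zero'' (it can run through twice). Triangular elimination is an operation on the algebraic data (realized geometrically by handle slides) and gives no control whatsoever over the resulting geometric intersection patterns; this is precisely the Andrews--Curtis-type difficulty underlying the whole subject. Indeed, if full rank implied the existence of canceling pairs, your argument would apply verbatim to \emph{every} acyclic special shadow, proving that every acyclic $2$-handlebody with $S^3$ boundary and a special shadow is $D^4$ --- i.e.\ it would answer the open Question in the introduction and render the cancellation-condition hypothesis of Theorem \ref{thm:general setting} vacuous. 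This is why the paper verifies the canceling pairs by direct inspection of the eighteen diagrams, and why your fallback tabulation must record genuine multiplicities of passage, not incidences over $\Integer/2\Integer$.

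There are also two counting slips that would derail the tabulation. For $n$ vertices, $S(X)$ has $2n$ edges and a maximal tree has $n-1$ of them, so there are $n+1$ non-tree edges: two (not one) for the $1^2_i$, and three (not two) for the $2^3_i$. Accordingly the incidence data in the two-vertex cases is a $3\times 3$ array, and one pairs off only $2$ of the $3$ non-tree edges; the remaining dotted circle and framed knot are disposed of by Property R, which is where the hypothesis $\partial M\cong S^3$ enters. Your treatment of complexity $0$ (an acyclic special polyhedron without vertices must be $D^2$, whence $M\cong D^4$) is correct, though the paper simply cites Theorem \ref{thm:Costantino and Naoe}.
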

\begin{proof}
The case of special shadow-complexiy 0 is due to 
Theorem \ref{thm:Costantino and Naoe}
(\ref{thm:acyclic special polyhedron of complexity 0}). 
Let $M$ be an acyclic $4$-manifold with special shadow-complexity $n$, where $n$ is $1$ or $2$. 
Let $X$ be a special shadow of $M$ with $n$ vertices. 
By Lemma \ref{lem:acyclic simply-connected special polyhedron}, 
$X$ is one of 
$1^2_i$ ($i=1,2$) and 
$2^3_i$ ($i=1, 2, \ldots, 16$). 
For each case, we can easily check that all of them admit 
$n$ canceling pairs. 
Thus, by Lemma \ref{lem:a sufficient condition to be the disk in terms of special shadow},  
$M$ is diffeomorphic to $D^4$. 
\end{proof}

\begin{remark}
It is worth noting that every shadow $X$ of a $4$-manifold $M$ with $\partial M = S^3$ 
is simply-connected. 
In fact, the restriction of a projection $M \searrow X$ to $\partial M$ 
induces a surjective homomorphism $\pi_1(\partial M) \to \pi_1(X)$. 
This fact does not give any restriction for the shadows in the above argument. 
That is, every acyclic special polyhedron with vertices up to $2$ is simply-connected. 
It is an interesting problem to find an acyclic special shadow that is not 
simply-connected. 
\end{remark}

\subsection{Acyclic simple polyhedra}
\label{subsec:Acyclic simple polyhedra}

We are going to extend Lemma \ref{lem:acyclic 4-manifold of special shadow-complexity at most 2} 
to the 4-manifolds of connected shadow-complexity at most $2$. 
We begin with two lemmas that will be used repeatedly in the remaining part of the paper.  

\begin{lemma}[Ikeda \cite{Ike71}]
\label{lem:Regions are planar}
Let $X$ be an acyclic simple polyhedron. 
Then the following holds.  
\begin{enumerate}
\item
Every region of $X$ is a $2$-sphere with holes. 
\item
If $X$ has no vertices, then $\partial X \neq \emptyset$. 
\end{enumerate}
\end{lemma}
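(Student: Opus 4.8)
The plan is to prove both statements via Euler-characteristic and local-structure analysis, exploiting that acyclicity forces $\chi(X)=1$ and that each region, being an open surface, is determined up to homeomorphism by its Euler characteristic once we know it is planar. The two parts are logically linked: part (1) is the engine, and part (2) follows by combining part (1) with a counting argument.

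For part (1), the claim is that each region $R$ is a $2$-sphere with holes, i.e. a planar surface. I would argue by contradiction: suppose some region $R$ has positive genus, so that $H_1(R;\Integer/2\Integer)$ contains a class realized by a non-separating simple closed curve $\gamma$ that is \emph{not} homologous into $\partial \bar R$. The key point is that a simple polyhedron collapses and deformation retracts in a controlled way onto its singular set together with the region cores, so such a $\gamma$ would survive in $H_1(X;\Integer/2\Integer)$ or $H_2(X;\Integer/2\Integer)$. More precisely, I would build up $X$ region by region as a CW/handle-type decomposition: start from $\Nbd(S(X);X)$, a regular neighborhood of the $4$-valent graph $S(X)$, and reattach each region $\bar R$ along its boundary circles. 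A non-planar region contributes an extra generator to $H_1$ (coming from a handle of the surface) that cannot be killed by the attaching map, since attaching is along $\partial \bar R$ only. This would contradict $H_*(X;\Integer/2\Integer)=0$ (acyclicity, hence acyclicity with any coefficients). I would make this precise using a Mayer--Vietoris or long-exact-sequence argument for the pair $(X, \Nbd(S(X);X))$, comparing $H_*(\bar R, \partial \bar R)$ across all regions.

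For part (2), suppose $X$ has no vertices, so $S(X)$ is a disjoint union of circles and $X$ decomposes into the pieces $D^2,P,Y_2,Y_{111},Y_{12},Y_3$ described in Subsection \ref{subsec:Graphs encoding simple polyhedra without vertices}. If additionally $\partial X=\emptyset$, then $X$ is a closed simple polyhedron with no vertices; I would compute $\chi(X)$ directly from the piece decomposition. Each triple-line piece ($Y$-bundle over $S^1$) has Euler characteristic $0$, each pair of pants $P$ has $\chi=-1$, and each disk $D^2$ has $\chi=1$; gluing along circles (each $\chi=0$) means $\chi(X)$ equals the sum of the piece contributions. The constraint that $X$ is \emph{closed} forces every boundary circle of every piece to be glued to a triple line, which rules out free disk and pair-of-pants boundaries and pins down the combinatorics enough to show $\chi(X)\le 0$, contradicting $\chi(X)=1$ from acyclicity. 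Hence $\partial X\neq\emptyset$.

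The main obstacle I anticipate is making the homological bookkeeping in part (1) rigorous: one must show precisely that a surface handle inside an individual region cannot be cancelled by the attaching data of neighboring regions, and that the perturbation/completion subtleties of $\bar R$ versus $R$ (arising when $\bar\iota_R$ is non-injective, as flagged in the gleam discussion) do not corrupt the homology computation. I would handle this by passing to a genuine CW structure on $X$ with $0$-cells at vertices (here none or finitely many), $1$-cells along $S(X)$, and $2$-cells filling the regions, then reading off $H_*(X)$ from the cellular chain complex; a non-planar region inflates the rank of $C_2$ relative to the attaching boundary maps in a way that $\chi=1$ and vanishing homology cannot simultaneously accommodate. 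Since this is precisely Ikeda's classical result, I would cite \cite{Ike71} for the delicate case analysis while presenting the Euler-characteristic skeleton of the argument above.
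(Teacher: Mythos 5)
The paper itself contains no proof of this statement: it is imported wholesale from Ikeda \cite{Ike71}, so your proposal can only be measured against the classical argument, not against anything in the text. That said, your sketch of part (1) is essentially sound. Decomposing $X$ into $\Nbd(S(X);X)$ and slightly shrunk copies of the region completions (which, as you note, sidesteps the non-injectivity of $\bar{\iota}_R$), the Mayer--Vietoris sequence with $\Integer/2\Integer$-coefficients together with $H_1(X;\Integer/2\Integer)=0$ forces the map from $H_1$ of the gluing annuli to $\bigoplus_R H_1(\bar{R};\Integer/2\Integer)$ to be surjective; on the other hand its image lies in the span of the boundary circles, whose rank in each region is at most $\#\partial\bar{R}-1$ (the sum of all boundary circles bounds the $\Integer/2\Integer$-fundamental class), while a non-planar region, orientable of genus $g>0$ or with $k>0$ cross-caps, has first $\Integer/2\Integer$-Betti number $\#\partial\bar{R}-1+2g$ or $\#\partial\bar{R}-1+k$. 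So the rank count gives the contradiction, and working mod $2$ conveniently excludes non-orientable regions as well.

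Part (2), however, has a genuine gap, and it cannot be repaired inside your Euler-characteristic framework. The step ``closedness pins down the combinatorics enough to show $\chi(X)\le 0$'' is false. Let $X_0 = Y_3\cup_\partial D^2$ be the closed, vertex-free simple polyhedron obtained by gluing a disk to the single boundary circle of $Y_3$. Its unique region is an open disk (so it is consistent with part (1)), and $\chi(X_0)=0+1=1$. Moreover $X_0$ is homotopy equivalent to the complex obtained from $S^1$ by attaching a $2$-cell along a degree-$3$ map, so $H_1(X_0;\Integer)\cong\Integer/3\Integer$ while $X_0$ is acyclic over $\Integer/2\Integer$ and over $\Rational$. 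Hence closed vertex-free polyhedra with $\chi=1$ do exist, and they can even have vanishing mod-$2$ and rational homology: the whole content of part (2) is the integral torsion created by the monodromy of the $Y_2$- and $Y_3$-pieces, which no Euler-characteristic or mod-$2$ bookkeeping can see. Indeed, running the count correctly for a closed vertex-free acyclic candidate (so $\chi(X)=1$, regions planar by part (1), and the encoding graph a tree by Lemma \ref{lem:retraction}) yields that the number of $Y_3$-pieces equals the number of $Y_{111}$-pieces plus one; the counting \emph{manufactures} $Y_3$-pieces instead of producing a contradiction. The missing idea is to first kill the $Y_2$- and $Y_3$-pieces integrally, for instance via Lemma \ref{lem:Naoe Lem 3.2} applied to the boundary circle $\gamma$ of such a piece $N$: one side of $\gamma$ is $N$ itself, which is neither acyclic ($H_1(N;\Integer)\cong\Integer$) nor a homology-$S^1$ whose $H_1$ is generated by $[\gamma]$, since $[\gamma]$ is $2$ or $3$ times the core class. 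Only after this integral step does your $\chi$-count (now forbidding $Y_3$-pieces) give the desired contradiction; as written, your concluding step fails.
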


\begin{lemma}[Naoe \cite{Nao17}]
\label{lem:Naoe Lem 3.2}
Let $X$ be an acyclic simple polyhedron. 
Let $\gamma \subset X \setminus S(X)$ be a simple closed curve. 
Then $\gamma$ splits $X$ into $A$ and $B$ such that 
\begin{enumerate}
\item
$A$ is acyclic; and 
\item
$B$ is a homology-$S^1$ and $H_1(B; \Integer)$ is generated by the cycle represented by $\gamma$.  
\end{enumerate}
\end{lemma}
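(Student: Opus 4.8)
The plan is to combine the planarity of regions with the acyclicity of $X$ through two applications of the Mayer--Vietoris sequence. First I would locate the region $R$ of $X$ that contains $\gamma$. By Lemma \ref{lem:Regions are planar}(1) every region is a $2$-sphere with holes, hence orientable, so $\gamma$ is two-sided in $X$ and admits a closed annular neighborhood $N \cong S^1 \times [-1,1]$ with $\gamma = S^1 \times \{0\}$ and $\partial N = \gamma_- \sqcup \gamma_+$. This reduces everything to a cut-and-paste decomposition of $X$ along $\gamma$.

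The hard part, and the only place where the geometry of acyclicity is genuinely used, will be to show that $\gamma$ actually \emph{separates} $X$; a priori $\gamma$ could separate its own region $R$ without separating $X$. Writing $Y := \Cl(X \setminus N)$, I would use $X = N \cup Y$ with $N \cap Y = \gamma_- \sqcup \gamma_+$. In the Mayer--Vietoris sequence of this pair the vanishing $H_1(X;\Integer)=0$ forces the map $H_0(\gamma_- \sqcup \gamma_+) \to H_0(N) \oplus H_0(Y)$ to be injective; since $[\gamma_-]$ and $[\gamma_+]$ coincide in $H_0(N) \cong \Integer$, injectivity forces them into \emph{distinct} components of $Y$. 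Because $X$ is connected and every component of $Y$ must meet $N$ along $\gamma_-$ or $\gamma_+$, this shows $Y$ has exactly two components $Y_- \ni \gamma_-$ and $Y_+ \ni \gamma_+$. Setting $A := Y_- \cup (S^1 \times [-1,0])$ and $B := Y_+ \cup (S^1 \times [0,1])$ then yields a decomposition $X = A \cup B$ with $A \cap B = \gamma$ and both pieces connected. Intuitively, a non-separating $\gamma$ would contribute a nonzero class to $H_1(X)$, which acyclicity forbids.

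Once separation is established the remaining homology computation is formal. Applying Mayer--Vietoris to $X = A \cup B$ with $A \cap B = \gamma \simeq S^1$, the portion $H_2(X) \to H_1(\gamma) \xrightarrow{\phi} H_1(A) \oplus H_1(B) \to H_1(X)$ together with $H_1(X;\Integer) = H_2(X;\Integer) = 0$ shows that $\phi$ is an isomorphism, so $H_1(A) \oplus H_1(B) \cong \Integer$. Since any decomposition $\Integer \cong G \oplus H$ has one factor trivial, after relabelling I may assume $H_1(A) = 0$ and $H_1(B) \cong \Integer$ with $\phi(1) = (0, [\gamma])$, so $[\gamma]$ generates $H_1(B)$. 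Finally, as $H_2(\gamma) = 0$ and $H_2(X) = 0$, the same sequence gives $H_2(A) = H_2(B) = 0$, and both pieces are $2$-dimensional. Hence $A$ is acyclic and $B$ is a homology-$S^1$ whose $H_1(B;\Integer)$ is generated by the class of $\gamma$, which is exactly the claim. I expect the separation step to be the main obstacle and the homology bookkeeping to be routine.
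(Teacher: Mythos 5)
Your proposal is correct, but note that there is nothing in this paper to compare it against: the lemma is imported from Naoe \cite{Nao17} (his Lemma 3.2) and is stated here without proof. Judged on its own merits, your argument is complete and is essentially the standard one (and matches the spirit of the original): the separation of $X$ by $\gamma$ is the only genuinely geometric step, and you handle it correctly by first invoking Ikeda's planarity lemma (Lemma \ref{lem:Regions are planar}, which is logically prior and independent, so no circularity) to get two-sidedness of $\gamma$ and an annular neighborhood $N$, and then running Mayer--Vietoris on $(N, \Cl(X\setminus N))$ in degree $0$; the injectivity of $H_0(\gamma_-\sqcup\gamma_+)\to H_0(N)\oplus H_0(Y)$ forced by $H_1(X)=0$ does rule out $\gamma_\pm$ lying in one component of $Y$ (and also rules out $Y=\emptyset$), and connectedness of $X$ gives exactly two components. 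The second Mayer--Vietoris application, using $H_1(X)=H_2(X)=0$ and the indecomposability of $\Integer$ to conclude $H_1(A)=0$, $H_1(B)\cong\Integer$ generated by $[\gamma]$, and $H_2(A)=H_2(B)=0$, is routine and correct as you say. The only points you gloss over are point-set in nature (every component of $Y$ meets $N$ because $X$ is connected; $Y\cap N=\gamma_-\sqcup\gamma_+$ because $N$ is a closed annulus inside an open $2$-manifold region), and both are easily justified.
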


We note that if the piece $B$ in Lemma \ref{lem:Naoe Lem 3.2} has no vertices, 
then it has at least one boundary component other than $\gamma$. 
Otherwise, the polyhedron $\hat{B}$ obtained from $B$ by capping off $\gamma$ 
by a disk is an acyclic polyhedron with $\# V(\hat{B}) = 0$ and $\partial \hat{B} = \emptyset$, 
which contradicts Lemma \ref{lem:Regions are planar}. 
In particular, an acyclic simple polyhedron does not contain a piece homeomorphic to 
$Y_2$ nor $Y_3$ (recall Subsection \ref{subsec:Graphs encoding simple polyhedra without vertices}). 

The following lemma is a generalization of 
Lemma \ref{lem:number of regions of acyclic special polyhedra}. 

\begin{lemma}
\label{lem: decomposition of X in the case where a component of S(X) contains all vertices}
Let $X$ be an acyclic simple polyhedron with at least one vertex. 
Suppose that there exists a component $S'$ of $S(X)$ containing $n$ $(\geq 1)$ vertices. 
Set $X' := \Nbd (S'; X)$ and $m := \# \partial X'$. 
Then $m \geq n+1$ and the simple closed curves $\partial X'$ splits $X$ into 
$X'$, $n+1$ acyclic pieces $A_1, \ldots, A_{n+1}$, and $m-n-1$ homology-$S^1$ pieces $B_{1} , \ldots , B_{m-n-1}$. 
Further, if $X$ is closed, each $B_i$ contains at least one vertex. 
%Further, if $B_i$ does not contain a vertex, then 
%$H_1(B_i; \Integer)$ is generated by the simple closed curve separating $B_i$ and the other part of $X$. 
\end{lemma}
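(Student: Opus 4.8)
The plan is to analyze the piece $X'=\Nbd(S';X)$ as an $I$-bundle structure over the $4$-regular graph $S'$, and to read off both the number of boundary circles $m$ and the homological types of the complementary pieces from the Euler characteristics and the combinatorics of $S'$. Since $S'$ contains exactly $n$ vertices and is $4$-regular, it has exactly $2n$ edges, so $\chi(S')=n-2n=-n$. The neighborhood $X'$ deformation retracts onto $S'$, hence $\chi(X')=\chi(S')=-n$. I would cut $X$ along the simple closed curves $\partial X'$ and let the complementary pieces be $P_1,\dots,P_m$ (closures of the components of $X\setminus X'$); by construction each $P_j$ is a simple polyhedron without vertices, since all vertices of $S(X)$ lying in $S'$ have been absorbed into $X'$ and the remaining components of $S(X)$ are circles. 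Thus each $P_j$ has singular set a disjoint union of circles, so Lemma~\ref{lem:Naoe Lem 3.2} and the discussion following it apply to describe each as either acyclic or a homology-$S^1$.

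Next I would determine the split into acyclic and homology-$S^1$ pieces by a Mayer--Vietoris computation. Capping each boundary circle of $X'$ by its adjacent piece, acyclicity of $X$ forces a rigid constraint: write $X=X'\cup(\bigcup_j P_j)$ glued along $m$ circles. The Mayer--Vietoris sequence, together with $H_*(X)=H_*(\mathrm{pt})$, relates the ranks $b_1(X')$, $b_1(P_j)$, and the $m$ gluing circles. Since $X'$ retracts onto the connected graph $S'$ with $\chi(S')=-n$, we have $b_1(X')=n+1-(\text{number of components of }S')=n$ after accounting for connectivity ($S'$ connected gives $b_0=1$, so $b_1(X')=1-\chi(S')=1+n$... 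I would recompute carefully: $b_1(X')=1-\chi(X')=1-(-n)=n+1$). The point is that the total first homology contributed by the $m$ cutting circles must exactly kill $H_1(X')$ and then cancel against the homology-$S^1$ pieces so that $H_1(X)=0$; the bookkeeping yields that precisely $n+1$ of the $P_j$ are acyclic (these are the $A_i$) and the remaining $m-n-1$ are homology-$S^1$ pieces $B_i$, which in turn forces $m\geq n+1$ so that the count $m-n-1$ is non-negative.

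The main obstacle I anticipate is the homological bookkeeping in the Mayer--Vietoris step, specifically showing that each complementary piece is \emph{either} acyclic \emph{or} a homology-$S^1$ (no higher $b_1$) and that the generator of each homology-$S^1$ piece is carried by its boundary circle. Here I would lean on Lemma~\ref{lem:Naoe Lem 3.2}: for each cutting curve $\gamma\subset\partial X'$ lying in a region of $X$, that lemma already splits $X$ into an acyclic part and a homology-$S^1$ part whose $H_1$ is generated by $[\gamma]$. Iterating this over the $m$ curves, while tracking which side of each cut remains acyclic, pins down the types of all pieces; the delicate part is ensuring the iterated application is consistent and that the acyclic pieces are genuinely the $n+1$ regions adjacent to $X'$ and not over- or under-counted.

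Finally, for the closed case, I would argue that each homology-$S^1$ piece $B_i$ must contain a vertex: if some $B_i$ had no vertices, then its singular set is a union of circles, and by the remark following Lemma~\ref{lem:Naoe Lem 3.2}, a vertex-free homology-$S^1$ piece must have a boundary component other than its defining curve $\gamma$; but in the closed case $\partial X=\emptyset$, and capping off $\gamma$ by the adjacent acyclic material would produce a vertex-free acyclic closed simple polyhedron, contradicting Lemma~\ref{lem:Regions are planar}(2). This last step reuses exactly the capping-off argument already spelled out after Lemma~\ref{lem:Naoe Lem 3.2}, so it should be routine once the decomposition is established.
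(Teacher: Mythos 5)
Your overall route is the same as the paper's: use Lemma~\ref{lem:Naoe Lem 3.2} to see that each complementary piece is either acyclic or a homology-$S^1$ whose $H_1$ is generated by its attaching circle, then count Euler characteristics ($\chi(X')=\chi(S')=n-2n=-n$, $\chi=1$ for an acyclic piece, $\chi=0$ for a homology-$S^1$ piece, $\chi(X)=1$) to conclude that exactly $n+1$ of the pieces are acyclic, whence $m\geq n+1$. That counting step is sound. However, your assertion that ``by construction each $P_j$ is a simple polyhedron without vertices'' is false and misreads the hypothesis: $S'$ is only assumed to be \emph{one} component of $S(X)$ containing $n$ vertices, and the other components of $S(X)$ --- which may themselves contain vertices --- lie inside the pieces $P_j$. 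This is not a pedantic point. The lemma is invoked in the proof of Theorem~\ref{thm:general setting} precisely when $S(X)$ has several vertex-bearing components, and its final clause (``each $B_i$ contains at least one vertex'') is meaningful only because the $B_i$ can contain vertices; under your claim that clause would be vacuous, and your claim combined with your own closed-case argument would ``prove'' that a closed acyclic $X$ never has homology-$S^1$ pieces at all, i.e.\ $m=n+1$ always --- exactly the false statement that Case~2 of the proof of Theorem~\ref{thm:general setting} has to rule out by a separate, nontrivial argument. Fortunately the false claim is not load-bearing for your decomposition step, since Lemma~\ref{lem:Naoe Lem 3.2} applies to any simple closed curve in $X\setminus S(X)$ with no vertex-freeness hypothesis on the resulting pieces.

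The second genuine error is in your final step. ``Capping off $\gamma$ by the adjacent acyclic material'' does not produce a vertex-free polyhedron: the acyclic side of that cut contains $X'$, hence all $n\geq 1$ vertices of $S'$, and the resulting union is just $X$ itself; no contradiction with Lemma~\ref{lem:Regions are planar}(2) can be extracted this way. The correct finish --- and the paper's --- is shorter and is already contained in the first half of your sentence: if some $B_i$ has no vertices, the remark after Lemma~\ref{lem:Naoe Lem 3.2} gives $B_i$ a boundary component other than $\gamma_i$; since $B_i$ meets $X'$ only along $\gamma_i$, any such extra boundary component must be a component of $\partial X$, contradicting $\partial X=\emptyset$. (Equivalently: closedness forces $\partial B_i=\gamma_i$, and then capping $\gamma_i$ with a \emph{disk} yields a closed, vertex-free, acyclic simple polyhedron, contradicting Lemma~\ref{lem:Regions are planar}(2).)
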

\begin{proof}
By Lemma \ref{lem:Naoe Lem 3.2} 
and the Euler characteristic computation, 
it is straightforward to see that  
$\partial X'$ splits $X$ into 
$X'$, $n+1$ acyclic pieces $A_1, \ldots, A_{n+1}$, and $m-n-1$ homology-$S^1$ pieces $B_{1} , \ldots , B_{m-n-1}$. 
Suppose that some $B_i$ does not contain vertices. 
Then as we noted right after Lemma \ref{lem:Naoe Lem 3.2}, 
$\partial B_i$ consists of at least two components. 
This implies that $\partial X$ is not empty. 
\end{proof}

\begin{remark}
If an acyclic piece $A_i$ in 
Lemma \ref{lem: decomposition of X in the case where a component of S(X) contains all vertices} 
contains no vertices, 
we can describe its explicit shape as we discuss in  
Appendix \ref{sec:Acyclic simple polyhedron without vertices and with a single boundary circle}. 
\end{remark}

\begin{lemma}
\label{lem: the case where a component of S(X) contains all vertices}
Let $M$ be an acyclic $4$-manifold with $\partial M \cong S^3$. 
Let $X \subset M$ be a shadow with $n$ vertices. 
Suppose that there exists a component $S'$ of $S(X)$ containing all vertices of $X$. 
Set $X' := \Nbd (S'; X)$. 
Suppose that $\# \partial X' = n+1$. 
Let $\hat{X}'$ be the special polyhedron obtained from $X'$ 
by capping off the boundary components by disks. 
If $\hat{X}'$ admits $n$ canceling pairs, then $M $ is diffeomorphic to $D^4$.
\end{lemma}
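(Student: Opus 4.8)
The plan is to reduce the statement to the special-shadow case of Lemma~\ref{lem:a sufficient condition to be the disk in terms of special shadow} by proving that $\hat{X}'$, after being equipped with the gleams induced from a suitable position inside $M$, is a special shadow of $M$ itself. Granting this, the conclusion is immediate: $M$ is acyclic with $\partial M \cong S^3$, and $\hat{X}'$ is a special shadow of $M$ with $n$ vertices that admits $n$ canceling pairs by hypothesis, so Lemma~\ref{lem:a sufficient condition to be the disk in terms of special shadow} yields $M \cong D^4$.

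First I would set up the decomposition. Applying Lemma~\ref{lem: decomposition of X in the case where a component of S(X) contains all vertices} with $m = \# \partial X' = n+1$, and noting that $m - n - 1 = 0$, the circles $\partial X'$ cut $X$ into $X'$ together with exactly $n+1$ acyclic pieces $A_1, \dots, A_{n+1}$ and no homology-$S^1$ piece. Comparing the $n+1$ pieces with the $n+1$ circles of $\partial X'$ forces each $A_i$ to be attached to $X'$ along a single boundary circle $\gamma_i := \partial X' \cap A_i$. Since the component $S'$ carries every vertex of $X$, each $A_i$ lies in $X \setminus S'$ and hence contains no vertex; thus each $A_i$ is an acyclic simple polyhedron without vertices, and $\hat{X}'$ is precisely the result of replacing every $A_i$ by a disk $D_i$ bounded by $\gamma_i$. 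In particular $\hat{X}'$ still has exactly $n$ vertices.

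The core of the proof is to show that this replacement does not change the underlying $4$-manifold, i.e. that the manifold $\hat{M}$ reconstructed from $\hat{X}'$ is diffeomorphic to $M$. I would argue locally, piece by piece. Each $A_i$ is acyclic and vertex-free, hence collapses onto $D^2$ by \cite{Nao17} and is therefore collapsible; consequently its regular neighborhood $B_i := \Nbd(A_i; M)$ is a smooth $4$-ball, as is a neighborhood of the capping disk $D_i$. Writing $M = \Nbd(X'; M) \cup (B_1 \cup \dots \cup B_{n+1})$, each ball $B_i$ is glued to $\Nbd(X'; M)$ along a solid-torus neighborhood of $\gamma_i$ in $\partial B_i \cong S^3$. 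The task is to check that this gluing agrees with the one coming from $\Nbd(D_i)$, which amounts to showing that $\gamma_i$ is unknotted in $\partial B_i$ with framing matching the gleam assigned to $D_i$; once this is verified for every $i$, an ambient diffeomorphism exchanges each $B_i$ for the neighborhood of a disk and yields $M \cong \hat{M}$.

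The hard part will be exactly this identification of the gluing of $B_i$. Collapsibility of $A_i$ only gives $B_i \cong D^4$; by itself it controls neither the isotopy class of $\gamma_i$ in $\partial B_i$ nor the induced framing. To pin these down I would exploit the structural constraints on vertex-free acyclic polyhedra established earlier---every region is planar by Lemma~\ref{lem:Regions are planar}, and such a polyhedron contains no $Y_2$ or $Y_3$ piece (the remark following Lemma~\ref{lem:Naoe Lem 3.2})---together with the explicit description of acyclic simple polyhedra without vertices and with a single boundary circle given in Appendix~\ref{sec:Acyclic simple polyhedron without vertices and with a single boundary circle}. The pieces $A_i$ whose closure also meets $\partial X$ need separate bookkeeping, since there $\gamma_i$ is only one of several boundary circles; I expect to treat these by an induction that peels off the $\partial X$-faces first and reduces to the single-boundary-circle model above.
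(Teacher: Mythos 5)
Your reduction stands or falls on the claim you yourself flag as the hard part: that each $\gamma_i$ is unknotted in $\partial B_i \cong S^3$ with framing equal to the gleam of the capping disk, so that swapping $B_i$ for a disk neighborhood gives $\hat{M} \cong M$. This claim is false, and the structural facts you propose to use (planarity of regions, absence of $Y_2$, $Y_3$, $Y_{111}$ pieces, the model of Appendix \ref{sec:Acyclic simple polyhedron without vertices and with a single boundary circle}) cannot prove it, because the very polyhedra allowed by that appendix already violate it. Take $A_i = Y_{12} \cup D^2$: one singular circle $c$, a disk region attached to $c$ once, and an annulus region whose inner boundary double covers $c$ and whose outer boundary is $\gamma_i$; this is exactly the appendix model with graph $(\mathrm{B})\!-\!(12)\!-\!(\mathrm{D})$ and the double edge on the boundary side, as forced by Lemma \ref{lem:vertex of type (12)}. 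Thickening in $M$, the $Y_{12}$-block becomes $S^1 \times D^3$ (a dotted circle) and the disk region becomes a $2$-handle whose attaching circle runs over that $1$-handle geometrically once, so $B_i \cong D^4$ as expected; but $\gamma_i$ runs over the $1$-handle \emph{twice}, and after canceling the pair its two strands are replaced by parallel copies of the framed core of the $2$-handle. The result is that $\gamma_i$ is a $(2, 2n\pm 1)$-torus knot in $\partial B_i \cong S^3$, where $n$ is the gleam of the disk region --- a trefoil for a suitable gleam. So capping $A_i$ off by a disk genuinely changes the $4$-manifold: $\hat{X}'$ is in general a shadow of a manifold different from $M$ (indeed $\partial \hat{M}$ need not even be $S^3$), and no bookkeeping about which pieces meet $\partial X$ repairs this.

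This is precisely why the paper's proof is organized differently: it never asserts that $\hat{X}'$ is a shadow of $M$. Instead it keeps each piece as an unknown pair, namely $M_{A_i} \cong D^4$ together with the framed knot $K_i \subset \partial M_{A_i}$ corresponding to $\gamma_i$, and invokes Lemmas \ref{lem:connected sum of framed knots} and \ref{lem:gluing formula for shadows} to produce a Kirby diagram of $M$ from the diagram of $\hat{X}'$: each $2$-handle attaching circle $L_i$ is replaced by the connected sum $L_i \# K_i$. The $n$ canceling pairs of $\hat{X}'$ survive this replacement, since the connected sums are performed in small balls away from the dotted circles, so the diagram reduces to a single dotted circle and a single framed knot --- which may well be knotted. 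It is Gabai's Property R (Theorem \ref{thm:Property R}), applied because $\partial M \cong S^3$, that forces this final diagram to be the standard one of Figure \ref{fig:property_R}, whence $M \cong D^4$. In short, Property R does exactly the work your unknottedness claim was meant to do; your final appeal to Lemma \ref{lem:a sufficient condition to be the disk in terms of special shadow}, which presupposes that $\hat{X}'$ is a special shadow of $M$ itself, has to be replaced by this diagrammatic argument rather than patched.
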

\begin{proof}
By Lemma \ref{lem: decomposition of X in the case where a component of S(X) contains all vertices}, 
the simple closed curves $\partial X'$ splits $X$ into 
$X'$ and $n+1$ acyclic pieces $A_1, \ldots, A_{n+1}$. 
Set $\gamma_i = X' \cap A_i$ for $i \in \{ 1 , \ldots , n+1 \}$. 
The decomposition of $X$ into $X'$ and $A_1, \ldots, A_{n+1}$ 
naturally induces a decomposition of $M$. 
Let $M_{X'}$ and $M_{A_i}$ be the pieces of the decomposition of $M$ corresponding to $X'$ and $A_i$. 
Note that $M_{X'}$ is diffeomorphic to $\natural_{n+1} (S^1 \times D^3)$. 
Since $A_i$ does not contain vertices,  
$M_{A_i}$ is diffeomorphic to $D^4$ by Theorem \ref{thm:Costantino and Naoe} 
(\ref{thm:acyclic simple polyhedron of complexity 0}). 
Let $K_i$ be the framed knot in $\partial M_{A_i}$ $(\cong S^3)$ corresponding to 
$\gamma_i$. 

Since $\hat{X}'$ admits $n$ canceling pairs, 
some Kirby diagram obtained from a diagram of $\hat{X}'$ as explained in Section 
\ref{subsec:From special shadows to Kirby diagrams} admits $n$ canceling pairs. 
That Kirby diagram consists of 
an $n+1$-component framed link $L = L_1 \sqcup \cdots \sqcup L_{n+1}$, 
where $L_i$ corresponds to $\gamma_i$, 
together with $n+1$ dotted
circles $U = U_1 \sqcup \cdots \sqcup U_{n+1}$. 
By Lemmas \ref{lem:connected sum of framed knots} and 
\ref{lem:gluing formula for shadows}, 
the Kirby diagram obtained from $L \cup U$ by replacing 
$L_i$ with $L_i \# K_i$ represents the 4-manifold $M$. 
Hence, that Kirby diagram of $M$ can be 
% canceled into a (framed) knot $K$ with a single dotted circle.  
simplified to a (framed) knot $K$ with a single dotted circle by handle-canceling.
Since $\partial M \cong S^3$, the Kirby diagram thus obtained is the one shown in Figure \ref{fig:property_R} 
by Theorem \ref{thm:Property R}. 
This implies that $M$ is diffeomorphic to $D^4$.  
\end{proof}

Let $X$ be a closed acyclic simple polyhedron. 
We say that $X$ satisfies the \textit{cancellation condition} if 
the following holds: For each component $S'$ of $S(X)$ such that 
$\# V (X') \geq 1$ and $\# \partial X' = \# V (X') + 1$, 
where $X' := \Nbd (S'; X)$, 
the special polyhedron $\hat{X}'$ obtained from $X'$ 
by capping off the boundary components by disks admits 
$\# V (X')$ canceling pairs.

Theorem \ref{thm:acyclic 4-manifold of connected shadow-complexity at most 2} is a direct consequence 
of the following theorem.

\begin{theorem}
\label{thm:general setting}
Let $M$ be an acyclic $4$-manifold with $\partial M \cong S^3$. 
If $M$ admits a closed shadow satisfying the cancellation condition, 
then $M$ is diffeomorphic to $D^4$. 
\end{theorem}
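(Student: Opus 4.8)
The goal is to show that if an acyclic $4$-manifold $M$ with $\partial M \cong S^3$ admits a closed shadow $X$ satisfying the cancellation condition, then $M \cong D^4$. The strategy is to decompose $X$ along the boundary circles of regular neighborhoods of the components of $S(X)$, reducing the global problem to a sequence of local gluing operations each governed by the already-established Property R machinery (Lemma \ref{lem: the case where a component of S(X) contains all vertices}). The main technical input is the structure theorem Lemma \ref{lem: decomposition of X in the case where a component of S(X) contains all vertices}, which tells us that each singular component $S'$ with $\# V(X') \geq 1$ cuts off acyclic pieces and homology-$S^1$ pieces in a controlled way, and the cancellation condition guarantees that the capped-off special polyhedron $\hat{X}'$ carries enough canceling pairs to trivialize its contribution to the Kirby diagram.

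First I would induct on the number of vertices of $X$, or equivalently set up an induction on the complexity of the decomposition of $X$ into pieces associated to the components of $S(X)$. If $X$ has no vertices, then by Lemma \ref{lem:Regions are planar}(2) we have $\partial X \neq \emptyset$, contradicting that $X$ is closed unless $X$ collapses to something lower-dimensional; in the vertex-free closed acyclic case the earlier results (Theorem \ref{thm:Costantino and Naoe}(\ref{thm:acyclic simple polyhedron of complexity 0})) already give $M \cong D^4$. So I may assume $X$ has at least one vertex and pick a component $S'$ of $S(X)$ with $\# V(X') \geq 1$, where $X' := \Nbd(S'; X)$. By Lemma \ref{lem: decomposition of X in the case where a component of S(X) contains all vertices}, writing $n := \# V(X')$ and $m := \# \partial X'$, the curves $\partial X'$ split $X$ into $X'$, the $n+1$ acyclic pieces $A_1, \ldots, A_{n+1}$, and the $m-n-1$ homology-$S^1$ pieces $B_1, \ldots, B_{m-n-1}$, and since $X$ is closed each $B_i$ contains at least one vertex.

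The crux is to eliminate the homology-$S^1$ pieces. Each $B_i$ is a homology-$S^1$ containing vertices, so its own singular set carries more vertices, and I would feed $B_i$ (after suitably capping or re-grouping its boundary) back into the inductive hypothesis, or else argue that its contribution to $M$ is a $1$-handle whose dual $2$-handle comes from a canceling pair supplied by the cancellation condition applied to whichever singular component carries the relevant vertices. Concretely, I would set up the manifold decomposition corresponding to the polyhedral one exactly as in the proof of Lemma \ref{lem: the case where a component of S(X) contains all vertices}: $M_{X'} \cong \natural_{n+1}(S^1 \times D^3)$, each $M_{A_i}$ with $A_i$ vertex-free is $D^4$ by Theorem \ref{thm:Costantino and Naoe}(\ref{thm:acyclic simple polyhedron of complexity 0}), and the pieces $M_{B_i}$ are handled by induction since each $B_i$ has strictly fewer vertices than $X$ once it is completed to a closed acyclic polyhedron. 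Assembling via Lemmas \ref{lem:connected sum of framed knots} and \ref{lem:gluing formula for shadows}, the resulting Kirby diagram of $M$ consists of dotted circles and framed link components; the cancellation condition, applied to every component $S'$ of $S(X)$ satisfying $\# \partial X' = \# V(X') + 1$, supplies exactly the canceling pairs needed to reduce the diagram down to a single framed knot with one dotted circle.

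The hard part, I expect, will be the bookkeeping when $m - n - 1 > 0$, i.e. when genuine homology-$S^1$ pieces $B_i$ are present and $\# \partial X' > \# V(X') + 1$: in that regime the cancellation condition says nothing directly about $S'$ (its hypothesis $\# \partial X' = \# V(X') + 1$ fails), so I cannot cancel $S'$'s handles locally and must instead push the analysis into the $B_i$. Making this recursion well-founded — showing that the total vertex count strictly decreases and that the acyclicity and closedness hypotheses are preserved at each step after capping the cutting curves $\partial X'$ by disks, while verifying that the cancellation condition is inherited by the sub-polyhedra — is the delicate point. Once the induction is organized so that every dotted circle is ultimately paired with a $2$-handle canceling it, Theorem \ref{thm:Property R} finishes the argument exactly as in Lemmas \ref{lem:a sufficient condition to be the disk in terms of special shadow} and \ref{lem: the case where a component of S(X) contains all vertices}, yielding the standard Kirby picture of Figure \ref{fig:property_R} and hence $M \cong D^4$.
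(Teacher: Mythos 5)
Your plan reproduces the parts of the paper's argument that were already in place before this theorem: the vertex-free case, the decomposition of $X$ along $\partial X'$ given by Lemma \ref{lem: decomposition of X in the case where a component of S(X) contains all vertices}, and the reassembly of $M$ via Lemmas \ref{lem:connected sum of framed knots} and \ref{lem:gluing formula for shadows} plus Theorem \ref{thm:Property R}, exactly as in Lemma \ref{lem: the case where a component of S(X) contains all vertices}. The gap sits precisely at the point you flag as ``the hard part'': what to do when a chosen singular component has $\#\partial X' > \#V(X')+1$, so that homology-$S^1$ pieces $B_i$ appear. Neither of your two suggested remedies works. Recursing into a piece $B_i$ fails because the inductive hypothesis (the theorem itself) requires an acyclic $4$-manifold \emph{with boundary $S^3$}: the capped polyhedron $\hat{B}_i$ is a closed shadow of the $4$-manifold obtained from $M_{B_i}$ by attaching a $2$-handle along $\gamma_i$, and you have no control on the boundary of that manifold; moreover, even if you knew it were $D^4$, it is not a submanifold of $M$ (the added $2$-handle is not in $M$), so no conclusion about $M$ can be reassembled from it. Applying the induction to $M_{B_i}$ itself is also impossible, since $M_{B_i}$ is a homology $S^1\times D^3$, not acyclic. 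Finally, appealing to ``a canceling pair supplied by the cancellation condition'' is circular: as you yourself observe, the cancellation condition imposes nothing on components with $\#\partial X' > \#V(X')+1$.

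The paper closes this gap with an idea absent from your proposal: it never recurses into the $B_i$; instead it proves the bad configuration cannot occur. The induction is on the number $k$ of components of $S(X)$ containing vertices, and in the inductive step the paper shows (its Case 2) that if \emph{every} such component $X_i$ had $\#\partial X_i > n_i+1$, then $X$ could not be closed. The argument: form the bipartite graph $\hat{G}$ on the vertex-bearing pieces $X_i$ and the complementary pieces $Y_j$; it is a tree because $X$ is acyclic; label each edge $0$ or $1$ according to whether the piece $Y_\gamma$ cut off by the corresponding curve is acyclic or a homology-$S^1$ (Lemma \ref{lem:Naoe Lem 3.2}); under the Case 2 hypothesis every $X_i$ meets a $1$-labelled edge, so some $Y_j$ meets only $1$-labelled edges (otherwise one builds an arbitrarily long embedded path in a finite tree); capping and cutting along the curves incident to this $Y_j$ then produces a closed, vertex-free, acyclic simple polyhedron, contradicting Lemma \ref{lem:Regions are planar}(2). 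Hence some component always satisfies $\#\partial X_i = n_i+1$; the cancellation condition does apply to it, its complementary pieces are all acyclic and carry fewer vertex-bearing singular components, and the induction closes as in your Case-1 discussion. Without this impossibility argument (or a substitute for it), your recursion is not well-founded, and the proposal is incomplete exactly where the real content of the theorem lies.
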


\begin{proof}[Proof of Theorem $\ref{thm:acyclic 4-manifold of connected shadow-complexity at most 2}$ from Theorem 
$\ref{thm:general setting}$]
Let $M$ be an acyclic $4$-manifold of connected shadow-complexity $n$   
with $\partial M \cong S^3$, where $n = 0, 1$ or $2$. 
The case where $n=0$ is due to 
Theorem \ref{thm:Costantino and Naoe} 
(\ref{thm:acyclic simple polyhedron of complexity 0}). 
In the following we assume that $n = 1$ or $2$. 
By Lemma \ref{lem:non-collapsible polyhedra}, 
$M$ admits a closed shadow $X$ of connected complexity $n$. 
Let $S'$ be a component of $S(X)$ such that 
$\# V (X') \geq 1$ and $\# \partial X_i = \# V (X') + 1$, 
where $X' := \Nbd (S'; X)$. 
Then the special polyhedron $\hat{X}'$ obtained from $X'$ 
by capping off the boundary components by disks remains to be acyclic. 
As we have seen in the proof of 
Lemma \ref{lem:acyclic 4-manifold of special shadow-complexity at most 2}, 
$\hat{X}'$ admits $\# V (X') + 1$ canceling pairs. 
Therefore, $X$ satisfies the cancellation condition. 
Consequently, $M$ is diffeomorphic to $D^4$ by Theorem \ref{thm:general setting}. 
\end{proof}

\begin{proof}[Proof of Theorem $\ref{thm:general setting}$]
Let $M$ be an acyclic $4$-manifold with $\partial M \cong S^3$.  
Let $X$ be a closed shadow of $X$ satisfying the cancellation condition. 
If $X$ contains no vertices, 
then the assertion follows from  
Theorem \ref{thm:Costantino and Naoe} 
(\ref{thm:acyclic simple polyhedron of complexity 0}). 
In the following we assume that $X$ contains vertices. 
Let $S_1, S_2, \ldots, S_k$ be the connected components of 
$S(X)$ having at least one vertex. 
We use the induction on $k$. 

Let $k = 1$. 
Set $X' := \Nbd (S_1; X)$. 
By Lemma \ref{lem: decomposition of X in the case where a component of S(X) contains all vertices}, 
we have $\# \partial X' = \# V (X') + 1$. 
Since $X$ satisfies the cancellation condition, $M$ is diffeomorphic to $D^4$ 
by Lemma \ref{lem: the case where a component of S(X) contains all vertices}.

Let $k \geq 2$ and assume that the conclusion holds for all $k' < k$. 
Set $X_i := \Nbd (S_i; X)$ and $n_i := \# V(X_i)$ for $i=1,2, \ldots, k$. 
By Lemma \ref{lem: decomposition of X in the case where a component of S(X) contains all vertices}, 
for each $i$, the simple closed curves $\partial X_i$ splits $X$ into 
$X'$, $n_i + 1$ acyclic pieces, and several (possibly no) homology-$S^1$ pieces. 
Further, here if there exists a homology-$S^1$ piece, 
then it contains a vertex. 
The argument is divided into two cases. 

\noindent 

\noindent {\it Case} 1: 
Suppose first that there exists $i \in \{ 1, 2, \ldots, k\}$ such that $\# \partial X_i = n_i + 1$. 
Without loss of generality, we can assume that $\# \partial X_1 = n_1 + 1$. 
Note that $n_1 = 1$ or $2$. 
By Lemma \ref{lem: decomposition of X in the case where a component of S(X) contains all vertices}, 
$\partial X_1$ splits $X$ into 
$X_1$ and $n_1 + 1$ acyclic pieces $A_1, \ldots, A_{n_1+1}$. 
The decomposition of $X$ into $X_1$, $A_1, \ldots, A_{n_1+1}$  
naturally induces a decomposition of $M$ into $M_{X_1}$, $M_{A_1}, \ldots, M_{A_{n_1+1}}$. 
Since the number of connected components of $A_j$ ($j \in \{ 1, \ldots, n_1 + 1 \}$) 
having at least 1 vertex is fewer than $k$, we have $M_{A_j} \cong D^4$ for each 
$j \in \{ 1, \ldots, n_1 + 1 \}$ 
by the assumption of the induction. 
Now the rest of the proof for this case runs as in 
Lemma \ref{lem: the case where a component of S(X) contains all vertices}. 

\noindent {\it Case} 2: 
Suppose that $\# \partial X_i > n_i + 1$ for all $i \in \{ 1, 2, \ldots, k\}$. 
We are going to show that in this case $X$ is not closed, which is a contradiction. 
Let $Y_1, Y_2, \ldots, Y_l$ be the connected components of 
$\overline{X - \bigcup_{i=1}^k X_i}$. 
Let $\hat{G}$ be the bipartite graph whose vertices are 
$\{ X_1, X_2, \ldots, X_k \} \sqcup \{ Y_1, Y_2, \ldots, Y_l \} $ such that two vertices 
$X_i$ and $Y_j$ span an edge 
if and only if $X_i \cap Y_j \neq \emptyset$. 
Note that since $X$ is acyclic, $\hat{G}$ is a tree. 
Note also that the set of edges of $\hat{G}$ one to one corresponds to 
the set of simple closed curves of $\bigcup_{i=1}^k \partial X_i$. 
Each edge of $\hat{G}$ is labeled by $0$ or $1$ as follows.  
Let $e$ be the edge of $\hat{G}$ corresponding to 
a simple closed curve $\gamma$ of $\partial X_i$. 
The curve $\gamma$ separates $X$ into two components 
$X_\gamma$ and $Y_\gamma$, where $X_i \subset X_\gamma$. 
See Figure \ref{fig:case2}. 
\begin{figure}[htbp]
\begin{center}
\begin{minipage}{6cm}
\includegraphics[width=6cm,clip]{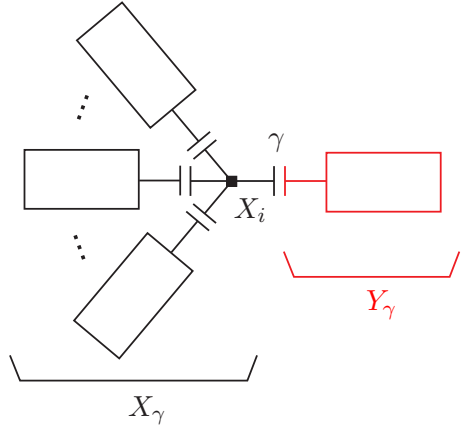}
\begin{picture}(400,0)(0,0)
\put(45,0){$X_\gamma$}
\put(135,40){\color{red} $Y_\gamma$}
\put(85,75){$X_i$}
\put(98,102){$\gamma$}
\end{picture}
\end{minipage}
\caption{The decomposition of $X$ into $X_\gamma$ and $Y_\gamma$.}
\label{fig:case2}
\end{center}
\end{figure}
By Lemma \ref{lem:Naoe Lem 3.2}, $Y_\gamma$ is  
acyclic or a homology-$S^1$. 
We assign $0$ to $e$ if $Y_\gamma$ is acyclic, and $1$ to $e$ if $Y_\gamma$ is a homology-$S^1$.  
Note that for each vertex $X_i$, there exists 
at least one $1$-labeled edge connected to $X_i$ by the assumption of Case 2. 
\begin{claim*}
\label{claim}
The graph $\hat{G}$ contains a vertex $Y_i$ such that every edge connected to $Y_i$ is 
labeled by $1$. 
\end{claim*}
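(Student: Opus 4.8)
The plan is to prove the claim by contradiction, via a simple double count of the edges of the tree $\hat{G}$. First I would record the two structural facts that drive everything: $\hat{G}$ is bipartite, so every edge has exactly one endpoint among $\{X_1,\dots,X_k\}$ and exactly one among $\{Y_1,\dots,Y_l\}$; and every edge carries a label, either $0$ or $1$, since by Lemma~\ref{lem:Naoe Lem 3.2} the piece $Y_\gamma$ cut off by the corresponding curve $\gamma$ is either acyclic or a homology-$S^1$. Being a tree on $k+l$ vertices, $\hat{G}$ has exactly $k+l-1$ edges, and its edge set is the disjoint union of the $0$-labelled and the $1$-labelled edges.

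Next I would set up the contradiction hypothesis: suppose every vertex $Y_j$ is incident to at least one $0$-labelled edge. I would then bound the two classes of edges from below. On the one hand, by the assumption of Case~2 each vertex $X_i$ is incident to at least one $1$-labelled edge (this is exactly the remark recorded just before the claim, which comes from Lemma~\ref{lem: decomposition of X in the case where a component of S(X) contains all vertices} together with $\#\partial X_i > n_i+1$). Assigning to each $X_i$ one such $1$-edge defines a map into the set of $1$-edges that is injective, because by bipartiteness each edge has a unique $X$-endpoint and so no two $X$-vertices share an edge; hence the number of $1$-edges is at least $k$. On the other hand, the contradiction hypothesis lets me assign to each $Y_j$ one incident $0$-edge, and the same bipartiteness argument makes this assignment injective, so the number of $0$-edges is at least $l$.

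Finally I would combine the counts: since every edge is either a $0$-edge or a $1$-edge, the total number of edges is at least $k+l$, which contradicts the tree count $k+l-1$. Therefore the contradiction hypothesis fails, i.e. some $Y_j$ has all of its incident edges labelled $1$, which is precisely the assertion.

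I expect the argument itself to be short, so the points needing care are structural rather than computational: that the labelling is genuinely defined on \emph{every} edge (guaranteed by Lemma~\ref{lem:Naoe Lem 3.2}), and---most importantly---that the Case~2 hypothesis really delivers the uniform lower bound ``at least one $1$-edge at each $X_i$'', which is where $\#\partial X_i > n_i+1$ enters through Lemma~\ref{lem: decomposition of X in the case where a component of S(X) contains all vertices}. As a sanity check I would also note that no $X_i$ can be a leaf of $\hat{G}$, since a leaf would force $\#\partial X_i = 1 < n_i+1$; this guarantees $l\geq 1$ and confirms that the claimed $Y$-vertex exists rather than the statement being vacuous.
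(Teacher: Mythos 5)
Your proof is correct, and it reaches the claim by a genuinely different mechanism from the paper's. The paper starts from the same contradiction hypothesis (every $Y_j$ meets a $0$-labelled edge) but argues dynamically: from an arbitrary $X$-vertex it alternately follows a $1$-labelled edge (available at every $X_i$ by the Case 2 hypothesis) and a $0$-labelled edge (available at every $Y_j$ by the contradiction hypothesis); since the labels alternate, the walk never backtracks, a non-backtracking walk in a tree never revisits a vertex, and so one obtains arbitrarily long embedded paths, contradicting the finiteness of $\hat{G}$. You replace this walk by a static double count: bipartiteness makes the assignment to each $X_i$ of a chosen incident $1$-edge, and to each $Y_j$ of a chosen incident $0$-edge, injective; the two edge classes are disjoint, so $\hat{G}$ would have at least $k+l$ edges, against the count $k+l-1$ for a tree on $k+l$ vertices. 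The structural inputs are identical in the two arguments --- bipartiteness, the tree property, the remark preceding the claim that Case 2 forces a $1$-edge at each $X_i$, and the fact from Lemma~\ref{lem:Naoe Lem 3.2} that each edge carries exactly one well-defined label --- but acyclicity is exploited differently: the paper needs only the absence of cycles together with finiteness, whereas you use the edge--vertex count of a tree (in fact only $\#E\le\#V-1$, so your argument works verbatim for forests, and connectivity of $\hat{G}$ is never needed). Your version is slightly tighter to write down, since it avoids the easy but unstated fact that non-backtracking walks in trees are embedded; the paper's version avoids counting altogether. Your closing check that no $X_i$ is a leaf (hence $l\ge 1$) is sensible, since the claim asserts the existence of a $Y$-vertex, but it is already subsumed by your count: if $l=0$ the contradiction hypothesis is vacuously true and the inequality $\#E\ge k>k-1$ still yields the contradiction, so the existence assertion is covered by the same argument.
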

\begin{proof}[Proof of Claim]
Suppose for a contradiction that for each vertex $Y_i$, 
there exists a $0$-labeled edge connected to $Y_i$. 
Choose an arbitrary vertex $X_{k_1}$ of $\hat{G}$. 
Then $X_{k_1}$ is connected to a vertex $Y_{l_1}$ of $\hat{G}$ by a $1$-labeled edge. 
By the assumption, $Y_{l_1}$ is connected to a vertex $X_{k_2}$ of $\hat{G}$ by a $0$-labeled edge. 
Then $X_{k_2}$ is connected to a vertex $Y_{l_2}$ by a $1$-labeled edge. 
In this way, we can make a path of arbitrary large length. 
During the process, we do not pass the same vertex more than once because $\hat{G}$ is a tree. 
This contradicts the finiteness of $\hat{G}$. 
\end{proof}
By the above claim, without loss of generality, we can assume that 
the all edges, say $\gamma_1 , \ldots, \gamma_m$, connected to $Y_1$ is labeled by $1$. 
This implies that for each $i$, 
\begin{itemize} 
\item
$X_{\gamma_i}$ is acyclic; and  
\item
$Y_{\gamma_i}$ is a homology-$S^1$ and 
$H_1(Y_{\gamma_i}; \Integer)$ is generated by the cycle represented by $\gamma_i$. 
\end{itemize} 
Let $Z_1$ be the polyhedron obtained from $Y_{\gamma_1}$ by capping off $\gamma_1$ 
by a disk. 
Then $Z_1$ is acyclic. 
Construct inductively a sequence of polyhedrons $Z_1, Z_2, \ldots, Z_m$, where 
$Z_{i+1}$ is the polyhedron obtained from $\overline{Z_i \setminus X_{\gamma_{i+1}}}$ 
by capping off $\gamma_{i+1}$ by a disk. 
Since each $X_{\gamma_i}$ is acyclic, $Z_i$ is again acyclic. 
Since $Z_m$ contains no vertex, by Lemma \ref{lem:Regions are planar} (2), 
$Z_m$ has at least one boundary component. 
This implies that $X$ is not closed, which is a contradiction. 
\end{proof}

\section*{Acknowledgments} 
The authors wish to express their gratitude to 
Kouichi Yasui for many helpful comments.  
%Finally, the authors would like to thank the anonymous referee
%for his or her valuable comments and suggestions 
%which helped them to improve the exposition. 

\appendix 

\section{Acyclic simple polyhedron without vertices and with a single boundary circle.}
\label{sec:Acyclic simple polyhedron without vertices and with a single boundary circle}

Let $X$ be an acyclic simple polyhedron without vertices and with a single boundary circle $\gamma$. 
We can describe a specific shape of $X$ as follows. 
Since $X$ is acyclic, $X$ cannot contain a piece homeomorphic to $Y_2$ nor $Y_3$.  
Further, $X$ does not contain a piece homeomorphic to $Y_{111}$ as well by 
the following Lemma. 

\begin{lemma}[Naoe \cite{Nao17}]
\label{lem:collapsing respecting a fixed boundary}
Let $X$ be an acyclic simple polyhedron without vertices. 
Fix a component $\gamma$ of $\partial X$. 
Then $X$ collapses onto a sub-polyhedron $X'$ fixing $\gamma$ such that 
$X'$ does not contain a piece homeomorphic to  $Y_{111}$ and $\partial X' = \gamma$. 
\end{lemma}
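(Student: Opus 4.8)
The plan is to prove the lemma by an explicit collapsing argument, organized as an induction on the combinatorial complexity of $X$, in which Lemma \ref{lem:Naoe Lem 3.2} and Lemma \ref{lem:Regions are planar} control where collapses can be started and why no $Y_{111}$ can survive.

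First I would record the structural consequences of the hypotheses. Since $X$ has no vertices, $S(X)$ is a disjoint union of circles, so $X$ decomposes into the pieces $D^2$, $P$, $Y_2$, $Y_{111}$, $Y_{12}$, $Y_3$ of Subsection \ref{subsec:Graphs encoding simple polyhedra without vertices}, and by Lemma \ref{lem:Regions are planar}(1) every region is a planar surface. As observed right after Lemma \ref{lem:Naoe Lem 3.2}, acyclicity together with Lemma \ref{lem:Regions are planar}(2) already forbids the pieces $Y_2$ and $Y_3$, since each is a vertex-free homology-$S^1$ with a single boundary circle and capping that circle would produce a closed acyclic vertex-free polyhedron. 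Moreover the encoding graph $G$ of $X$ is a retract of the acyclic polyhedron $X$, so the inclusion splits $H_1(G)$ off $H_1(X)=0$; hence $H_1(G)=0$ and $G$ is a tree. In particular the only leaves of $G$ are boundary circles and disk pieces $D^2$, while $P$ and $Y_{111}$ are trivalent and $Y_{12}$ is bivalent.

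The structural heart of the argument is the local analysis of a $Y_{111}$. Its core is a circle $c$ with three sheets, and cutting $X$ along the three collar circles $c_1,c_2,c_3$ bounding the $Y_{111}$ block separates off three pieces $F_1,F_2,F_3$, which are distinct because removing a trivalent vertex of the tree $G$ disconnects its three branches. By Lemma \ref{lem:Naoe Lem 3.2} each $F_i$ is acyclic or a homology-$S^1$, and since $\chi(Y_{111})=\chi(c_i)=0$ while $\chi(X)=1$, the identity $\chi(X)=\sum_i \chi(F_i)$ forces exactly two of the $F_i$ to be homology-$S^1$ and one to be acyclic. Each homology-$S^1$ side is vertex-free, so by the remark following Lemma \ref{lem:Naoe Lem 3.2} it carries, besides $c_i$, at least one further boundary circle; following these boundary circles down the tree, one reaches an honest boundary circle of $X$. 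Since $\gamma$ lies in only one of the three sides, at least one homology-$S^1$ side avoiding $\gamma$ supplies a free boundary circle $\beta\neq\gamma$ through which that side can be collapsed inward. The elementary move I would use is then to push such a free boundary $\beta$ across the (planar) regions on its side, always collapsing an innermost free annulus or disk first, until a sheet of $c$ is reached and deleted; this converts $c$ from a triple circle into a nonsingular one, removing the $Y_{111}$, while simultaneously eliminating $\beta$. Because a collapse is a deformation retract that only deletes open cells, it preserves acyclicity and vertex-freeness and can never raise the valence of a circle of $S(X)$, so the number of $Y_{111}$ pieces is monotonically non-increasing throughout.

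I would then run the induction: as long as $X$ contains a $Y_{111}$, or a boundary circle other than $\gamma$, the move above, performed so as never to touch $\gamma$, strictly decreases the lexicographic complexity $(\#\{Y_{111}\},\,\#\pi_0(\partial X))$, and the reductions of the first paragraph can be re-applied to the result; termination and the base case $\partial X'=\gamma$ with no $Y_{111}$ are then immediate. I expect the genuine obstacle to lie not in the homological counting but in the bookkeeping of the collapsing step: one must verify that the inward collapse from $\beta$ can always be realized by a sequence of \emph{clean} annular and disk collapses that keep every intermediate object a bona fide simple polyhedron without vertices and fixing $\gamma$ — rather than, say, collapsing a pair of pants onto a graph and dropping out of the category — and that after deleting a sheet the newly nonsingular circle is correctly absorbed into the adjacent planar regions. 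Controlling the order of the collapses (innermost circles first) and re-establishing planarity and the tree structure after each move is the delicate part of the proof.
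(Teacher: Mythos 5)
Your proposal cannot be compared against a proof in the paper, because the paper does not prove this lemma: it is quoted verbatim from \cite{Nao17} and used as a black box in Appendix \ref{sec:Acyclic simple polyhedron without vertices and with a single boundary circle}. So I can only judge the attempt on its own merits. The preparatory analysis is correct and uses exactly the toolkit the paper sets up elsewhere: acyclicity excludes $Y_2$ and $Y_3$ pieces (the remark after Lemma \ref{lem:Naoe Lem 3.2}); the encoding graph $G$ is a tree by Lemma \ref{lem:retraction}; cutting along the three collar circles of a $Y_{111}$ piece gives three distinct sides, of which the Euler-characteristic count (each side being acyclic or a homology-$S^1$ by Lemma \ref{lem:Naoe Lem 3.2}) forces exactly one to be acyclic and two to be homology-$S^1$'s; and each vertex-free homology-$S^1$ side carries a free boundary circle, so some free circle $\beta\neq\gamma$ lies on a side avoiding $\gamma$. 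All of this is sound. A pleasant addition you could have made: the same tree/Euler-characteristic bookkeeping gives $\#\pi_0(\partial X)=\#\{Y_{111}\text{ pieces}\}+1$ for any acyclic vertex-free simple polyhedron, so the second entry of your lexicographic complexity is redundant and termination is just induction on the number of $Y_{111}$ pieces.

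The genuine gap is the step your final paragraph openly defers, and it is not bookkeeping: it is the entire content of the lemma, which is a statement about \emph{collapsibility}, not homotopy type (recall that contractible simple polyhedra need not collapse at all, e.g.\ the dunce hat, so no amount of homological control can substitute for producing the collapse). Concretely, the region of $X$ containing $\beta$ need not be an annulus or a disk; if it is a sphere with three or more holes, \emph{any} collapse starting at $\beta$ necessarily leaves one-dimensional residue, so the intermediate object leaves the category of simple polyhedra, and your prescription ``collapse an innermost free annulus or disk first'' has nothing to act on. One must then prove that the residual arcs are always eventually swallowed: this works when the cascade of collapses terminates at disk pieces at the leaves of the subtree, but it must be proved by an induction down that subtree, and it can fail to be even well-posed if the cascade runs into a \emph{second} $Y_{111}$ piece inside the side $F$ --- at that moment the object is almost-simple, not simple, so your lexicographic induction hypothesis cannot be invoked on it. A workable repair is to choose the $Y_{111}$ piece farthest from $\gamma$ in the tree $G$, so that the chosen homology-$S^1$ side contains no other $Y_{111}$ pieces (only planar regions and $Y_{12}$'s, whose doubled ends are forced to point toward the core by the argument of Lemma \ref{lem:vertex of type (12)}), and then verify by induction on the pieces of that side that $F$ together with the adjacent sheet collapses onto the core circle, with every residual arc terminating at a disk piece or a free circle where it can be collapsed back. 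Without this verification the proposal establishes only that a suitable starting circle $\beta$ exists, not that the asserted collapse does.
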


Let $G$ be a graph $G$ encoding $X$, which is a tree by Lemma \ref{lem:retraction}. 
Let $v_0$ be the unique vertex of type (B) (recall Figure \ref{fig:graphic_encoding}),  
which corresponds to the unique boundary component $\partial X = \gamma$, in $G$. 
Let $v_1$ be a vertex of type $(12)$ in $G$. 
Since $G$ is a tree, there exists a unique path from $v_0$ to $v_1$. 
Let $e$ be an edge in the path incident to $v_1$. 
\begin{lemma}
\label{lem:vertex of type (12)}
In the above setting, the edge $e$ is marked with two lines. 
\end{lemma}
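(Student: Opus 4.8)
The plan is to argue by contradiction. Since $v_1$ is of type $(12)$, the corresponding piece $W := Y_{12}$ has exactly two boundary circles: one that covers the core (singular) circle $C$ of $W$ once, call it $\partial_1$ (drawn with a single line), and one that covers $C$ twice, call it $\partial_2$ (drawn with two lines). The edge $e$, being incident to $v_1$, corresponds to one of $\partial_1,\partial_2$, so it suffices to show that $e$ cannot correspond to $\partial_1$. I will use throughout that $G$ is a tree (Lemma \ref{lem:retraction}), so that both $\partial_1$ and $\partial_2$ are separating simple closed curves in $X$, and that each of them lies in $X \setminus S(X)$, so that Lemma \ref{lem:Naoe Lem 3.2} applies to them.

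Suppose then that $e$ corresponds to $\partial_1$. Cutting $X$ along $\partial_2$ yields two pieces; let $X^+$ be the one containing $W$ and let $X^-$ be the other. Because the path from $v_0$ to $v_1$ enters $v_1$ through the $\partial_1$-edge, the vertex $v_0$ — that is, the unique boundary circle $\gamma = \partial X$ — lies in $X^+$. Consequently $X^-$ has no vertices and its only boundary circle is $\partial_2$.

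The heart of the argument is to apply Lemma \ref{lem:Naoe Lem 3.2} with $\gamma' := \partial_2$: one of $X^+, X^-$ is acyclic and the other is a homology-$S^1$ with $H_1$ generated by $[\partial_2]$. First I would determine which side is which by a divisibility argument. In $H_1(W) \cong \Integer\langle [C]\rangle$ one has $[\partial_2] = 2[C]$, since $\partial_2$ double-covers $C$. If $X^+$ were the homology-$S^1$ piece, then its generator $[\partial_2]$ would equal $2[C]$ in $H_1(X^+) \cong \Integer$, forcing a generator of $\Integer$ to lie in $2\Integer$, which is absurd. Hence $X^+$ is acyclic and $X^-$ is the homology-$S^1$ piece. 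But $X^-$ has no vertices and its unique boundary component is $\gamma' = \partial_2$, which is exactly the configuration ruled out by the remark following Lemma \ref{lem:Naoe Lem 3.2} (capping $\gamma'$ off by a disk would produce an acyclic closed simple polyhedron with no vertices, contradicting Lemma \ref{lem:Regions are planar} (2)). This contradiction shows that $e$ corresponds to $\partial_2$, i.e. $e$ is marked with two lines.

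The step I expect to be the main obstacle is the precise homological bookkeeping behind the divisibility argument: one must check that the relation $[\partial_2] = 2[C]$ in $H_1(W)$ survives under the inclusion $W \hookrightarrow X^+$, and that the generator of the homology-$S^1$ side provided by Lemma \ref{lem:Naoe Lem 3.2} is indeed $[\partial_2]$ rather than some other class. Once the two sides are correctly identified, the contradiction supplied by the remark after Lemma \ref{lem:Naoe Lem 3.2} is immediate.
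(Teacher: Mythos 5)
Your proof is correct, but it takes a genuinely different route from the paper's. The paper also argues by contradiction, yet it cuts along the curve $\alpha$ corresponding to $e$ itself (under the contradiction hypothesis, the one-line boundary circle of the $Y_{12}$-piece): the side $X_2$ not containing $\gamma$ satisfies $\partial X_2 = \alpha$ and has no vertices, so by the remark following Lemma \ref{lem:Naoe Lem 3.2} it cannot be the homology-$S^1$ side and is therefore acyclic; collapsing $X_2$ from $\alpha$ then eats through the $Y_{12}$-piece along the leg traced by $\alpha$ and leaves the two swapped legs, i.e. a M\"obius band inside a region, contradicting Lemma \ref{lem:Regions are planar} (1). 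You instead cut along the two-line curve $\partial_2$, decide which side is acyclic by the divisibility relation $[\partial_2]=2[C]$ (a generator of $H_1\cong\Integer$ cannot be twice another class), and obtain the contradiction from the remark after Lemma \ref{lem:Naoe Lem 3.2} applied to the vertex-free side $X^-$. So the two proofs use the same two ingredients but in opposite roles: the paper uses the remark to identify the acyclic side and planarity of regions for the final blow, whereas you use homology to identify the acyclic side and the remark for the final blow. Both exploit the nontrivial monodromy of $Y_{12}$ --- the paper geometrically (collapsing from the one-line boundary leaves a M\"obius band), you algebraically ($\partial_2$ double-covers the core circle). What your route buys: it avoids the collapsing step, where one must verify that the collapsed polyhedron is still simple and that the M\"obius band really sits inside a region; your steps are the two quoted lemmas plus functoriality of $H_1$, which also settles the two worries you flag at the end (the relation $[\partial_2]=2[C]$ pushes forward under inclusion, and Lemma \ref{lem:Naoe Lem 3.2} states explicitly that the generator is the class of the cutting curve). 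What the paper's route buys: cutting along $e$'s own curve keeps the $Y_{12}$-piece on the side being analyzed, so the acyclic side is identified immediately by the remark and no divisibility bookkeeping is needed.
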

\begin{proof}
Suppose not for a contradiction. 
By the simple closed curve $\alpha$ corresponding to $e$, 
the polyhedron $X$ decomposes into 2 polyhedra $X_1$ and $X_2$, where $X_1$ contains $\gamma$. 
We note that $\partial X_2 = \alpha$. 
By Lemma \ref{lem:Naoe Lem 3.2}, one of them is acyclic, 
and the other is a homology-$S^1$. 
Since $ X_2$ has no boundary component other than $\alpha$, $X_2$ cannot be a homology-$S^1$. 
Thus $X_2$ is acyclic. 
By collapsing $X_2$ from $\alpha$, we obtain an acyclic simple polyhedron containing 
a M\"obius band in a region. This contradicts Lemma \ref{lem:Regions are planar}.
\end{proof}
Now we are ready to describe the shape of $X$. 
For convenience, as a generalization of a vertex of type (P), we introduce a white vertex 
of degree $d \geq 3$ as shown in Figure \ref{fig:acyclic_piece} (i) 
to encode a piece of a simple polyhedron homeomorphic to the sphere with $d$ holes. 
By the previous observation and Lemma \ref{lem:vertex of type (12)}, 
the shape of $G$ can thus be described as in 
Figure \ref{fig:acyclic_piece} (ii).  
\begin{figure}[htbp]
\begin{center}
\begin{minipage}{14cm}
\includegraphics[width=14cm,clip]{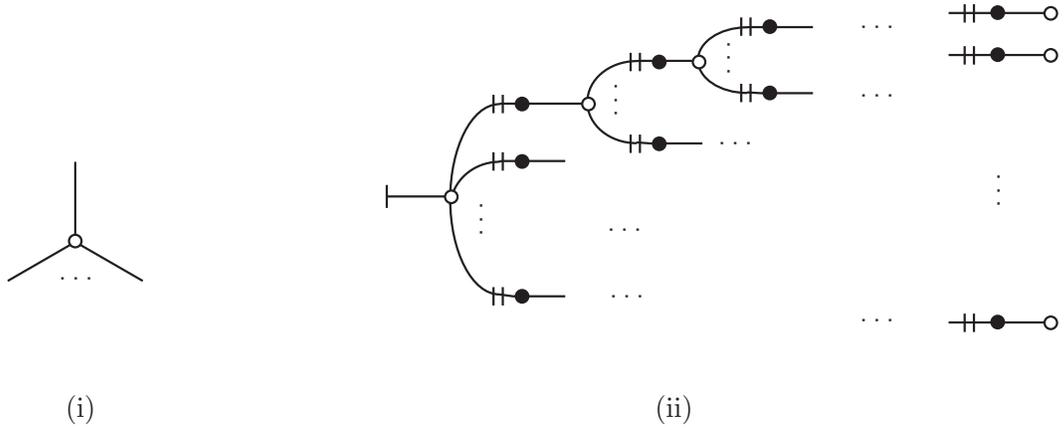}
\begin{picture}(400,0)(0,0)
\put(22,0){(i)}
\put(245,0){(ii)}
\end{picture}
\end{minipage}
\caption{(i) A vertex encoding a sphere with a finite number of holes. (ii) The graph $G$.}
\label{fig:acyclic_piece}
\end{center}
\end{figure}

\section{Table of special polyhedron of complexity up to $2$}
\label{sec:Table of special polyhedron of complexity up to 2}

\subsection{Special polyhedra with $1$ vertices}
\label{subsec:Special polyhedra with 1 vertices}

\begin{center}
$1^1_{1}$\parbox[c]{26mm}{\includegraphics[width=22mm]{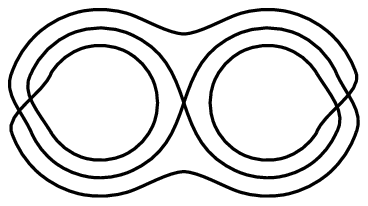}}
$1^1_{2}$\parbox[c]{26mm}{\includegraphics[width=22mm]{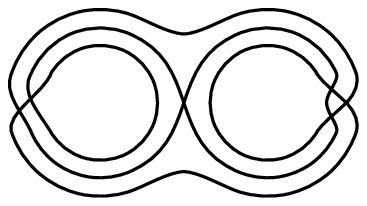}}
$1^2_{1}$\parbox[c]{26mm}{\includegraphics[width=22mm]{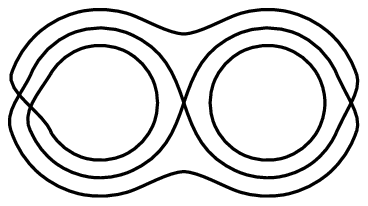}}
$1^2_{2}$\parbox[c]{26mm}{\includegraphics[width=22mm]{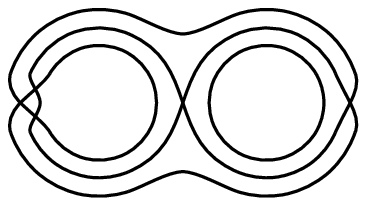}}
$1^2_{3}$\parbox[c]{26mm}{\includegraphics[width=22mm]{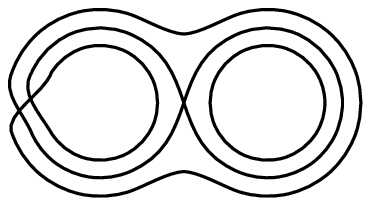}}
$1^2_{4}$\parbox[c]{26mm}{\includegraphics[width=22mm]{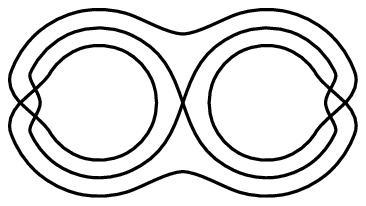}}
$1^2_{5}$\parbox[c]{26mm}{\includegraphics[width=22mm]{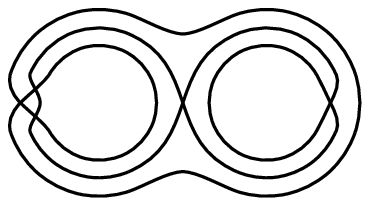}}
$1^3_{1}$\parbox[c]{26mm}{\includegraphics[width=22mm]{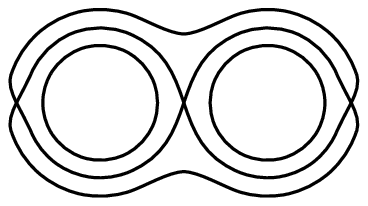}}
$1^3_{2}$\parbox[c]{26mm}{\includegraphics[width=22mm]{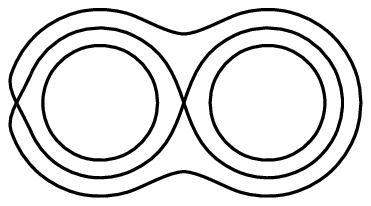}}
$1^3_{3}$\parbox[c]{26mm}{\includegraphics[width=22mm]{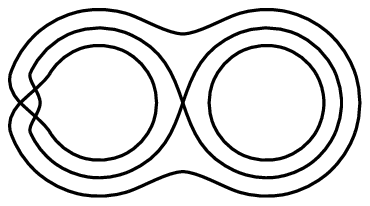}}
$1^4_{1}$\parbox[c]{26mm}{\includegraphics[width=22mm]{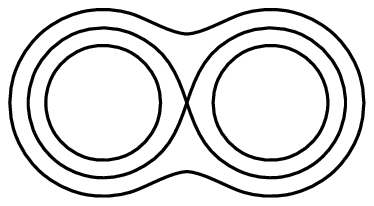}}
\end{center}

\subsection{Special polyhedra with $2$ vertices and $1$ region}

\begin{center}
$2^1_{1}$\parbox[c]{36mm}{\includegraphics[width=32mm]{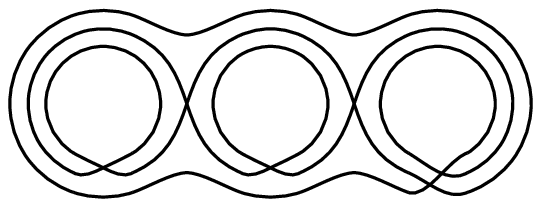}}
$2^1_{2}$\parbox[c]{36mm}{\includegraphics[width=32mm]{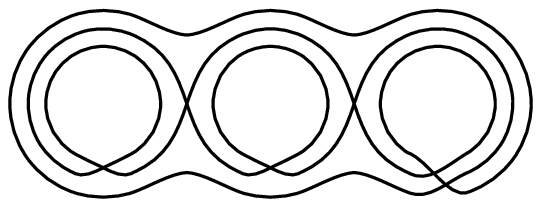}}
$2^1_{3}$\parbox[c]{36mm}{\includegraphics[width=32mm]{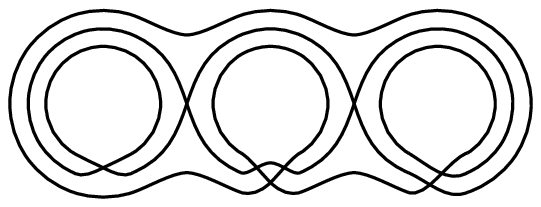}}
$2^1_{4}$\parbox[c]{36mm}{\includegraphics[width=32mm]{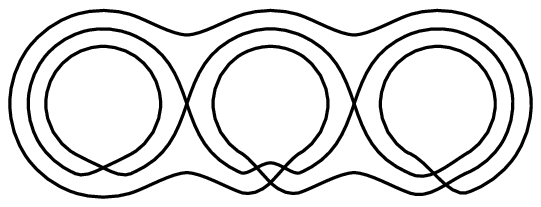}}
$2^1_{5}$\parbox[c]{36mm}{\includegraphics[width=32mm]{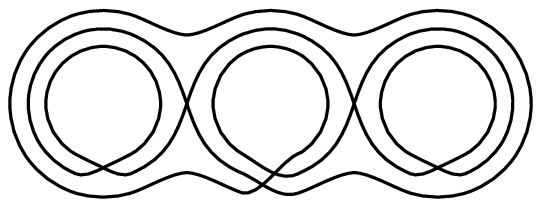}}
$2^1_{6}$\parbox[c]{36mm}{\includegraphics[width=32mm]{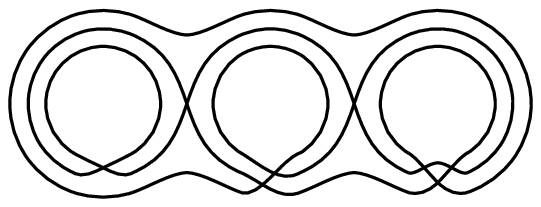}}
$2^1_{7}$\parbox[c]{36mm}{\includegraphics[width=32mm]{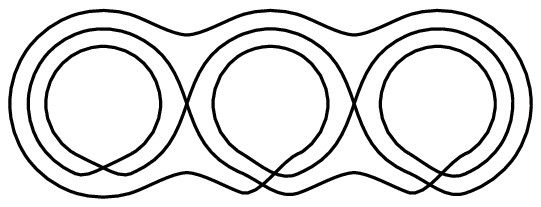}}
$2^1_{8}$\parbox[c]{36mm}{\includegraphics[width=32mm]{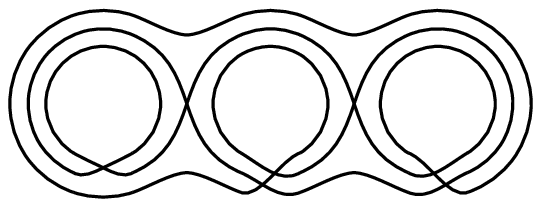}}
$2^1_{9}$\parbox[c]{36mm}{\includegraphics[width=32mm]{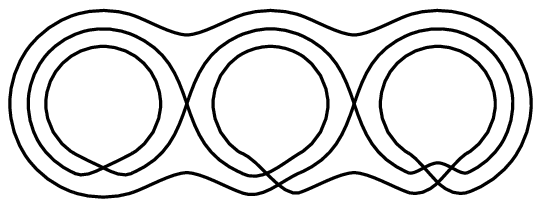}}
$2^1_{10}$\parbox[c]{36mm}{\includegraphics[width=32mm]{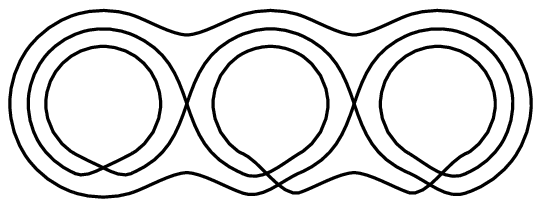}}
$2^1_{11}$\parbox[c]{36mm}{\includegraphics[width=32mm]{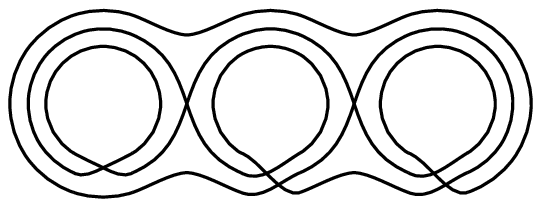}}
$2^1_{12}$\parbox[c]{36mm}{\includegraphics[width=32mm]{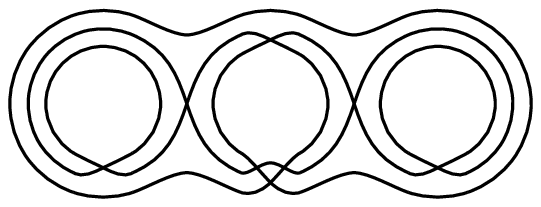}}
$2^1_{13}$\parbox[c]{36mm}{\includegraphics[width=32mm]{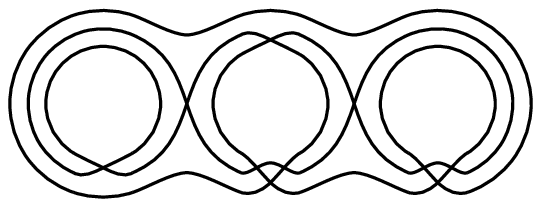}}
$2^1_{14}$\parbox[c]{36mm}{\includegraphics[width=32mm]{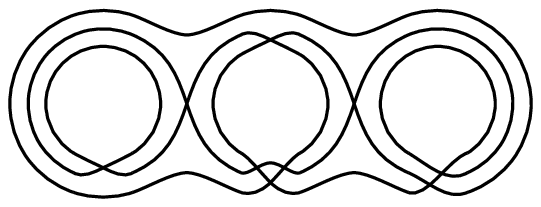}}
$2^1_{15}$\parbox[c]{36mm}{\includegraphics[width=32mm]{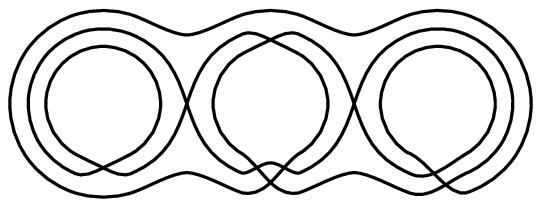}}
$2^1_{16}$\parbox[c]{36mm}{\includegraphics[width=32mm]{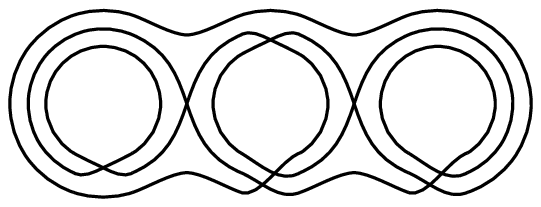}}
$2^1_{17}$\parbox[c]{36mm}{\includegraphics[width=32mm]{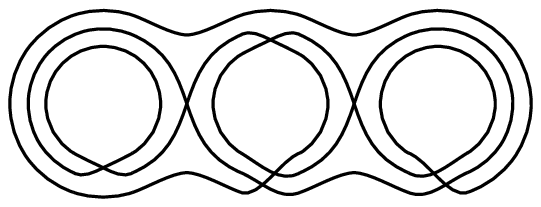}}
$2^1_{18}$\parbox[c]{36mm}{\includegraphics[width=32mm]{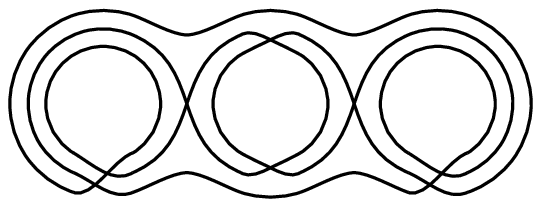}}
$2^1_{19}$\parbox[c]{36mm}{\includegraphics[width=32mm]{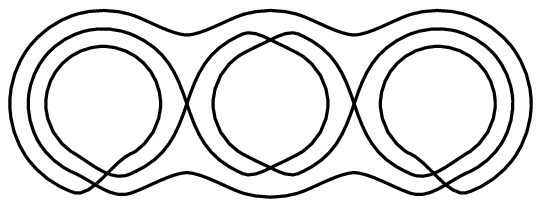}}
$2^1_{20}$\parbox[c]{36mm}{\includegraphics[width=32mm]{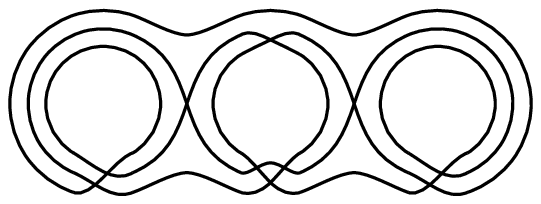}}
$2^1_{21}$\parbox[c]{36mm}{\includegraphics[width=32mm]{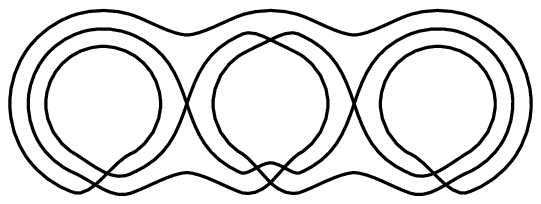}}
$2^1_{22}$\parbox[c]{36mm}{\includegraphics[width=32mm]{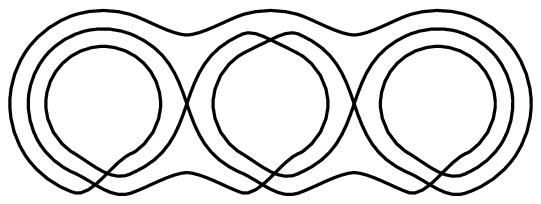}}
$2^1_{23}$\parbox[c]{36mm}{\includegraphics[width=32mm]{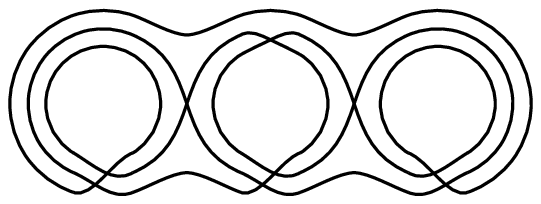}}
$2^1_{24}$\parbox[c]{21mm}{\includegraphics[width=18mm]{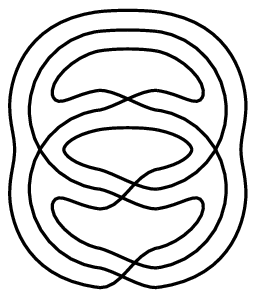}}
$2^1_{25}$\parbox[c]{21mm}{\includegraphics[width=18mm]{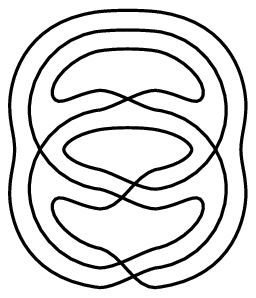}}
$2^1_{26}$\parbox[c]{21mm}{\includegraphics[width=18mm]{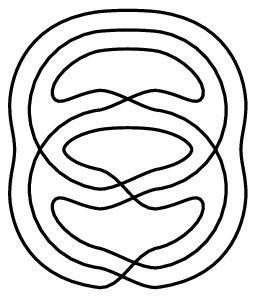}}
$2^1_{27}$\parbox[c]{21mm}{\includegraphics[width=18mm]{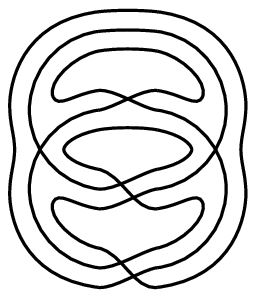}}
$2^1_{28}$\parbox[c]{21mm}{\includegraphics[width=18mm]{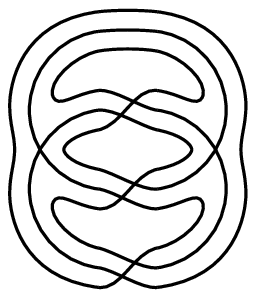}}
$2^1_{29}$\parbox[c]{21mm}{\includegraphics[width=18mm]{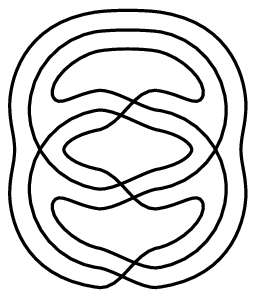}}
$2^1_{30}$\parbox[c]{21mm}{\includegraphics[width=18mm]{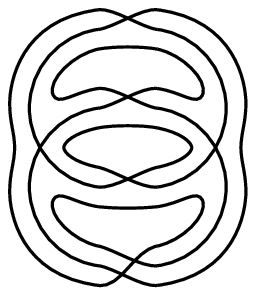}}
$2^1_{31}$\parbox[c]{21mm}{\includegraphics[width=18mm]{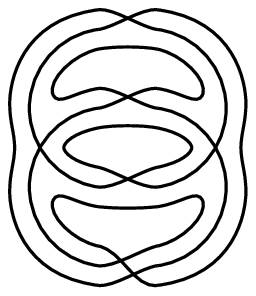}}
$2^1_{32}$\parbox[c]{21mm}{\includegraphics[width=18mm]{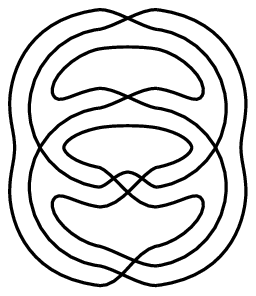}}
$2^1_{33}$\parbox[c]{21mm}{\includegraphics[width=18mm]{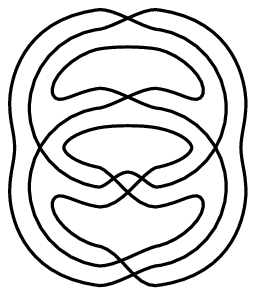}}
$2^1_{34}$\parbox[c]{21mm}{\includegraphics[width=18mm]{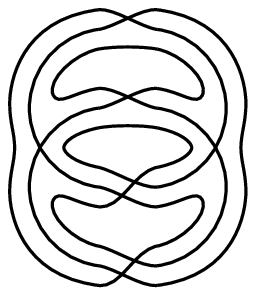}}
$2^1_{35}$\parbox[c]{21mm}{\includegraphics[width=18mm]{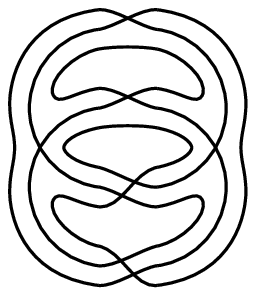}}
$2^1_{36}$\parbox[c]{21mm}{\includegraphics[width=18mm]{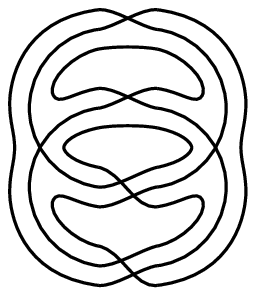}}
$2^1_{37}$\parbox[c]{21mm}{\includegraphics[width=18mm]{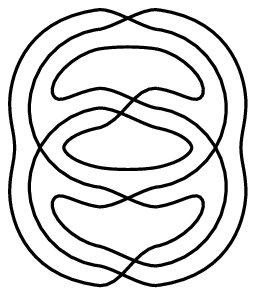}}
$2^1_{38}$\parbox[c]{21mm}{\includegraphics[width=18mm]{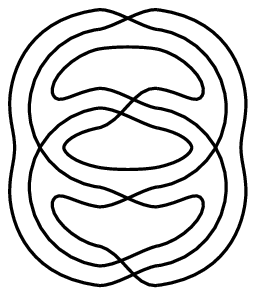}}
$2^1_{39}$\parbox[c]{21mm}{\includegraphics[width=18mm]{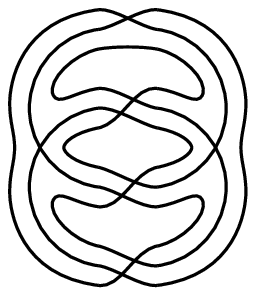}}
$2^1_{40}$\parbox[c]{21mm}{\includegraphics[width=18mm]{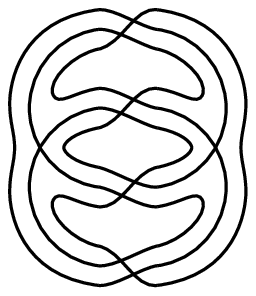}}
\end{center}
\subsection{Special polyhedra with $2$ vertices and $2$ regions}
\begin{center}
$2^2_{1}$\parbox[c]{36mm}{\includegraphics[width=32mm]{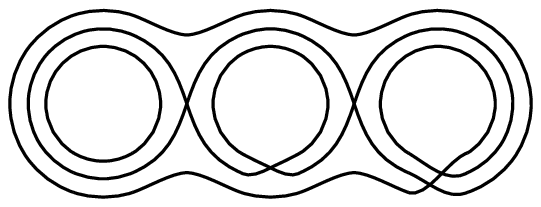}}
$2^2_{2}$\parbox[c]{36mm}{\includegraphics[width=32mm]{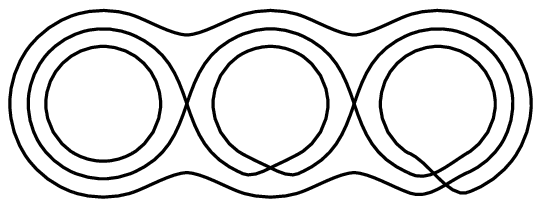}}
$2^2_{3}$\parbox[c]{36mm}{\includegraphics[width=32mm]{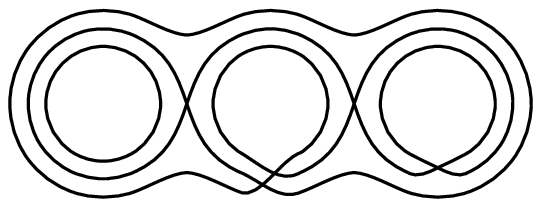}}
$2^2_{4}$\parbox[c]{36mm}{\includegraphics[width=32mm]{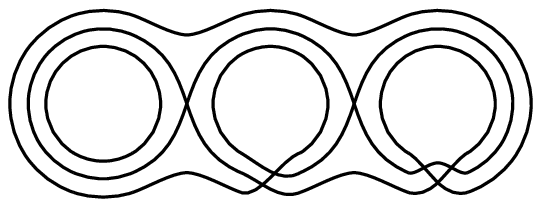}}
$2^2_{5}$\parbox[c]{36mm}{\includegraphics[width=32mm]{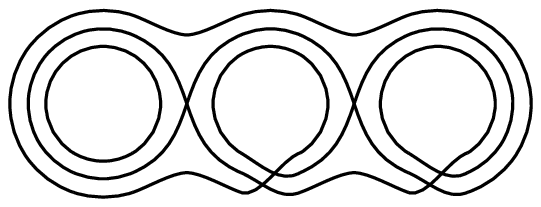}}
$2^2_{6}$\parbox[c]{36mm}{\includegraphics[width=32mm]{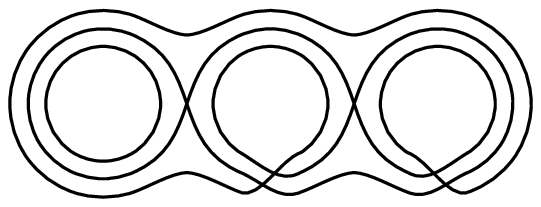}}
$2^2_{7}$\parbox[c]{36mm}{\includegraphics[width=32mm]{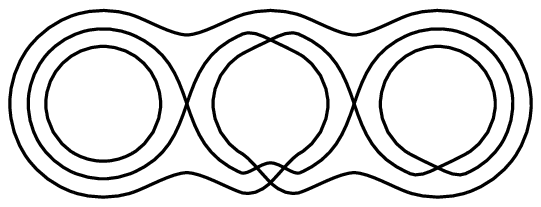}}
$2^2_{8}$\parbox[c]{36mm}{\includegraphics[width=32mm]{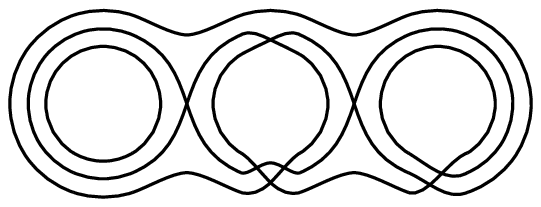}}
$2^2_{9}$\parbox[c]{36mm}{\includegraphics[width=32mm]{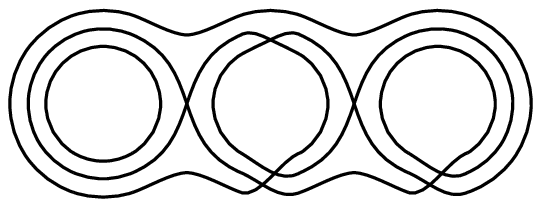}}
$2^2_{10}$\parbox[c]{36mm}{\includegraphics[width=32mm]{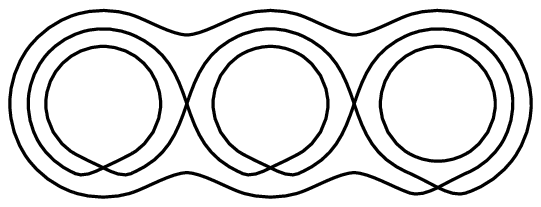}}
$2^2_{11}$\parbox[c]{36mm}{\includegraphics[width=32mm]{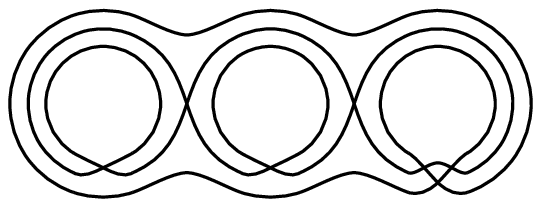}}
$2^2_{12}$\parbox[c]{36mm}{\includegraphics[width=32mm]{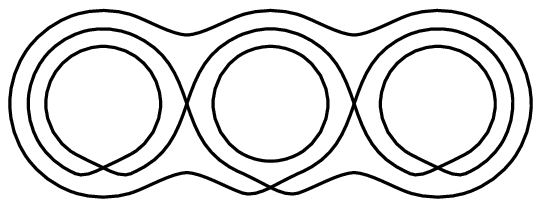}}
$2^2_{13}$\parbox[c]{36mm}{\includegraphics[width=32mm]{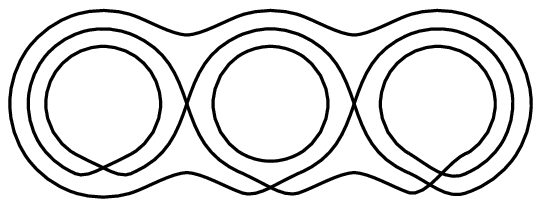}}
$2^2_{14}$\parbox[c]{36mm}{\includegraphics[width=32mm]{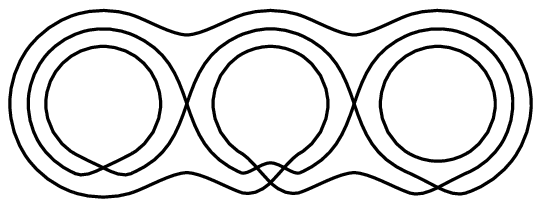}}
$2^2_{15}$\parbox[c]{36mm}{\includegraphics[width=32mm]{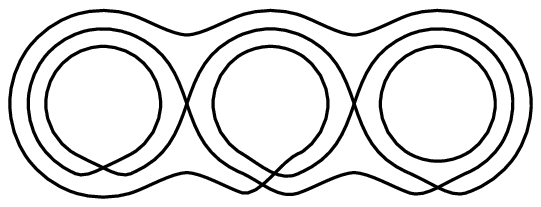}}
$2^2_{16}$\parbox[c]{36mm}{\includegraphics[width=32mm]{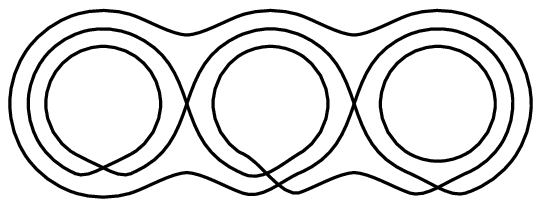}}
$2^2_{17}$\parbox[c]{36mm}{\includegraphics[width=32mm]{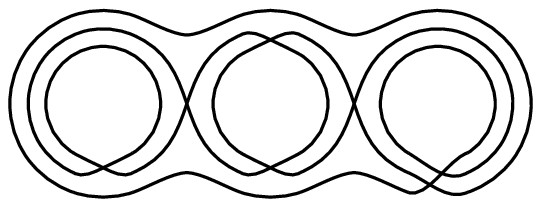}}
$2^2_{18}$\parbox[c]{36mm}{\includegraphics[width=32mm]{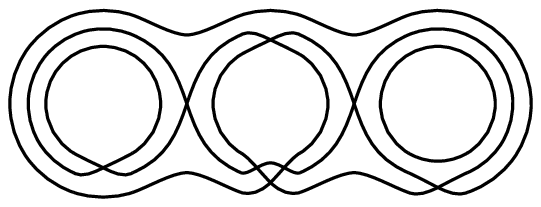}}
$2^2_{19}$\parbox[c]{36mm}{\includegraphics[width=32mm]{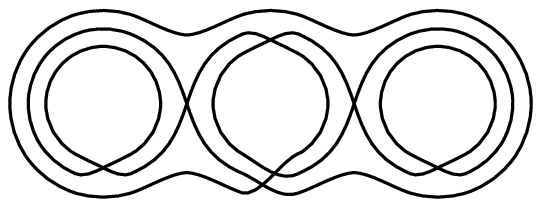}}
$2^2_{20}$\parbox[c]{36mm}{\includegraphics[width=32mm]{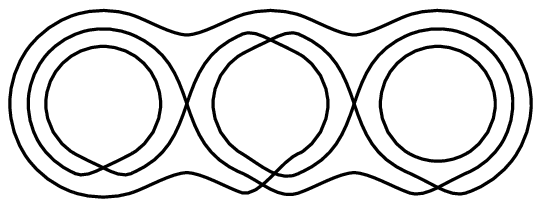}}
$2^2_{21}$\parbox[c]{36mm}{\includegraphics[width=32mm]{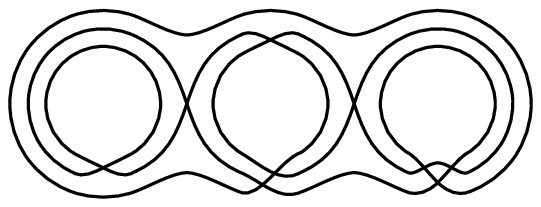}}
$2^2_{22}$\parbox[c]{36mm}{\includegraphics[width=32mm]{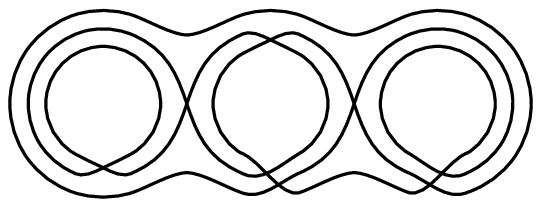}}
$2^2_{23}$\parbox[c]{36mm}{\includegraphics[width=32mm]{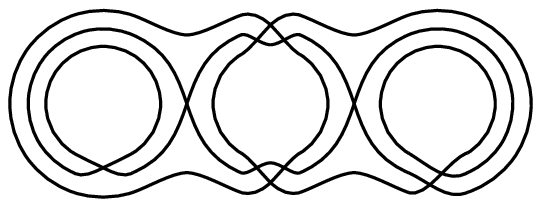}}
$2^2_{24}$\parbox[c]{36mm}{\includegraphics[width=32mm]{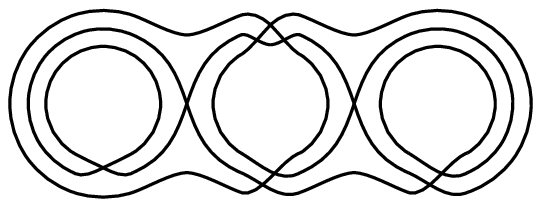}}
$2^2_{25}$\parbox[c]{36mm}{\includegraphics[width=32mm]{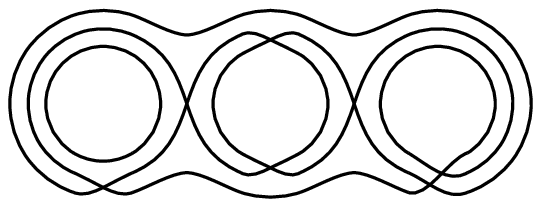}}
$2^2_{26}$\parbox[c]{36mm}{\includegraphics[width=32mm]{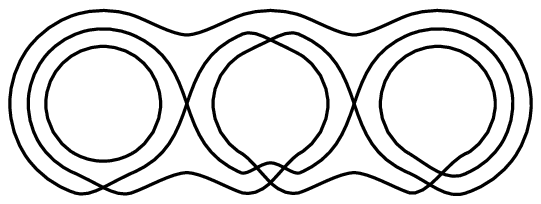}}
$2^2_{27}$\parbox[c]{36mm}{\includegraphics[width=32mm]{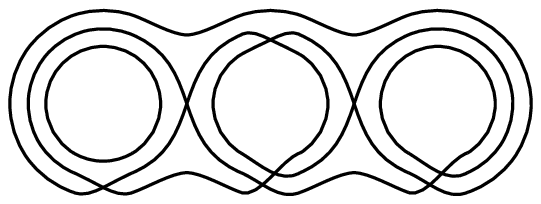}}
$2^2_{28}$\parbox[c]{36mm}{\includegraphics[width=32mm]{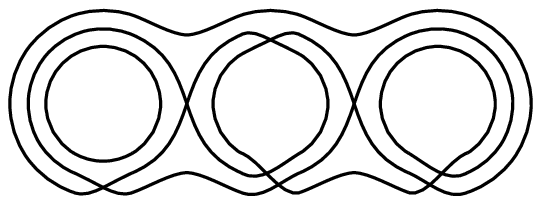}}
$2^2_{29}$\parbox[c]{36mm}{\includegraphics[width=32mm]{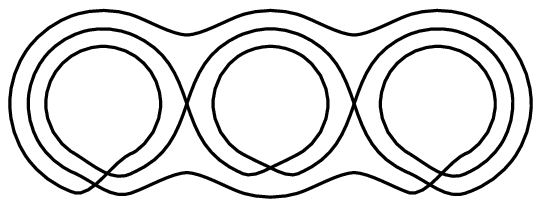}}
$2^2_{30}$\parbox[c]{36mm}{\includegraphics[width=32mm]{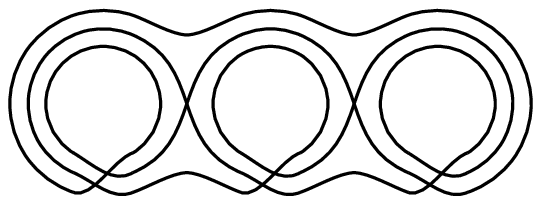}}
$2^2_{31}$\parbox[c]{36mm}{\includegraphics[width=32mm]{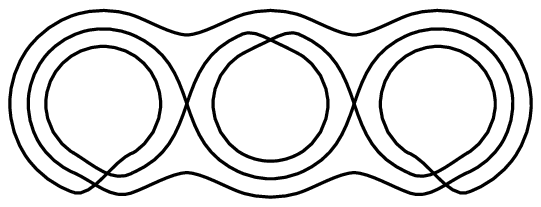}}
$2^2_{32}$\parbox[c]{36mm}{\includegraphics[width=32mm]{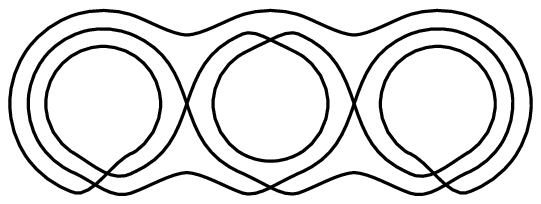}}
$2^2_{33}$\parbox[c]{21mm}{\includegraphics[width=18mm]{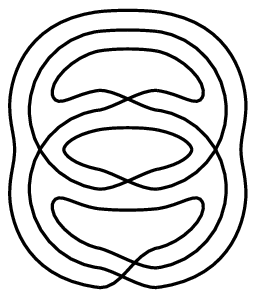}}
$2^2_{34}$\parbox[c]{21mm}{\includegraphics[width=18mm]{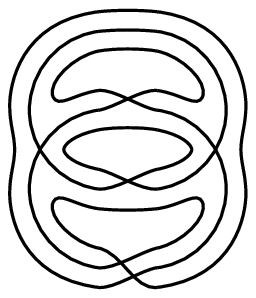}}
$2^2_{35}$\parbox[c]{21mm}{\includegraphics[width=18mm]{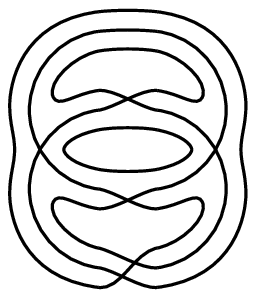}}
$2^2_{36}$\parbox[c]{21mm}{\includegraphics[width=18mm]{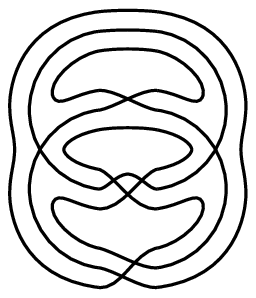}}
$2^2_{37}$\parbox[c]{21mm}{\includegraphics[width=18mm]{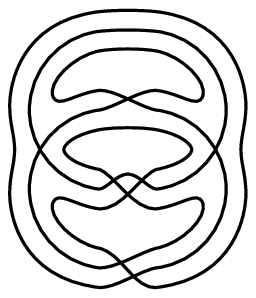}}
$2^2_{38}$\parbox[c]{21mm}{\includegraphics[width=18mm]{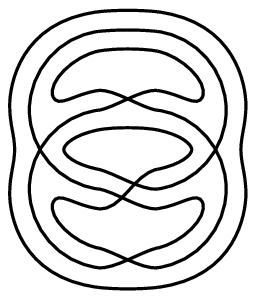}}
$2^2_{39}$\parbox[c]{21mm}{\includegraphics[width=18mm]{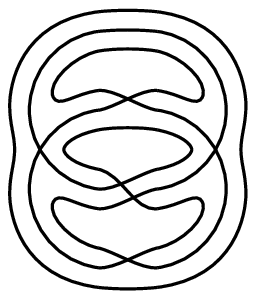}}
$2^2_{40}$\parbox[c]{21mm}{\includegraphics[width=18mm]{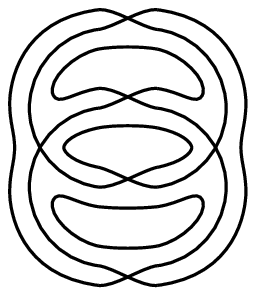}}
$2^2_{41}$\parbox[c]{21mm}{\includegraphics[width=18mm]{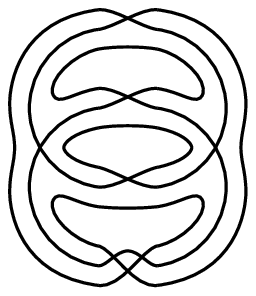}}
$2^2_{42}$\parbox[c]{21mm}{\includegraphics[width=18mm]{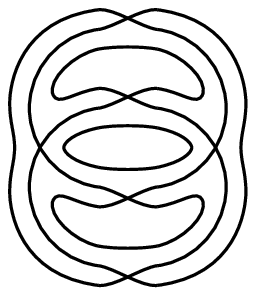}}
$2^2_{43}$\parbox[c]{21mm}{\includegraphics[width=18mm]{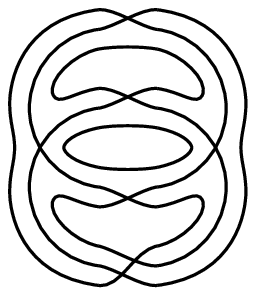}}
$2^2_{44}$\parbox[c]{21mm}{\includegraphics[width=18mm]{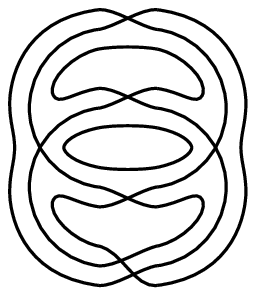}}
$2^2_{45}$\parbox[c]{21mm}{\includegraphics[width=18mm]{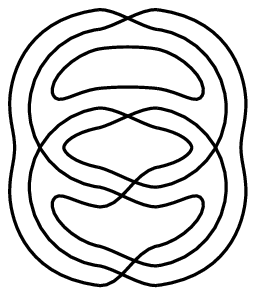}}
$2^2_{46}$\parbox[c]{21mm}{\includegraphics[width=18mm]{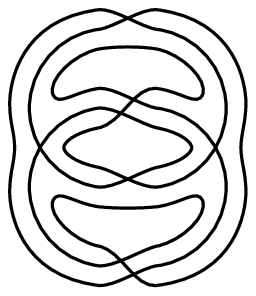}}
$2^2_{47}$\parbox[c]{21mm}{\includegraphics[width=18mm]{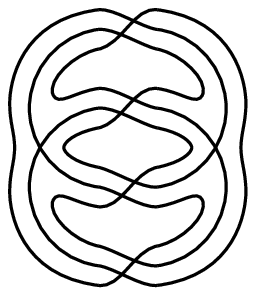}}
$2^2_{48}$\parbox[c]{36mm}{\includegraphics[width=32mm]{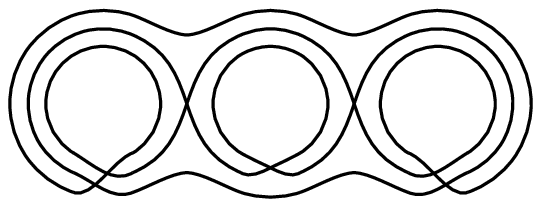}}
$2^2_{49}$\parbox[c]{36mm}{\includegraphics[width=32mm]{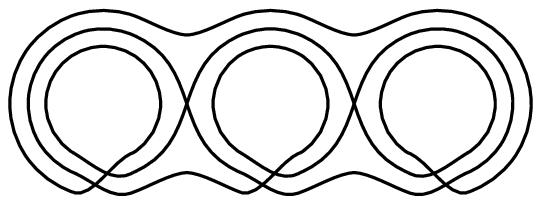}}
$2^2_{50}$\parbox[c]{21mm}{\includegraphics[width=18mm]{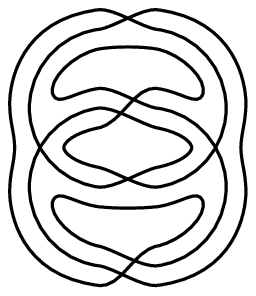}}
$2^2_{51}$\parbox[c]{21mm}{\includegraphics[width=18mm]{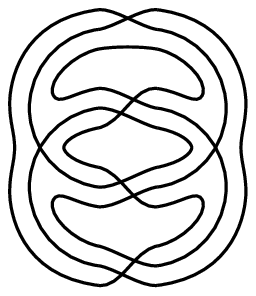}}
$2^2_{52}$\parbox[c]{36mm}{\includegraphics[width=32mm]{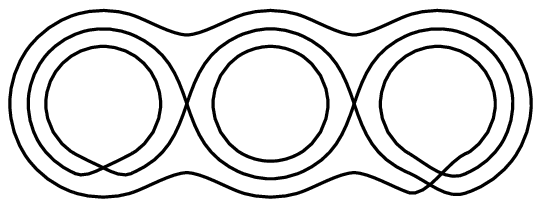}}
$2^2_{53}$\parbox[c]{21mm}{\includegraphics[width=18mm]{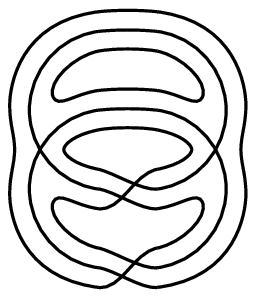}}
$2^2_{54}$\parbox[c]{21mm}{\includegraphics[width=18mm]{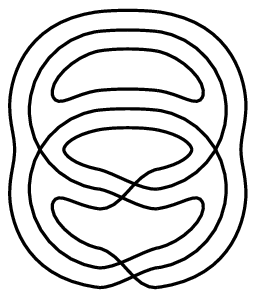}}
$2^2_{55}$\parbox[c]{36mm}{\includegraphics[width=32mm]{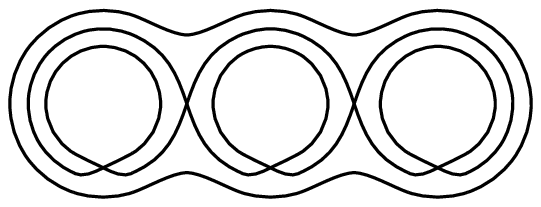}}
$2^2_{56}$\parbox[c]{36mm}{\includegraphics[width=32mm]{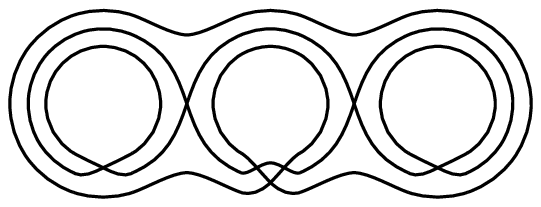}}
$2^2_{57}$\parbox[c]{21mm}{\includegraphics[width=18mm]{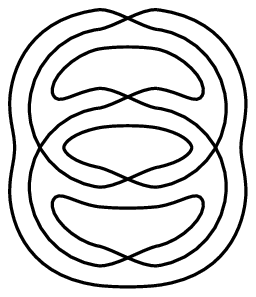}}
$2^2_{58}$\parbox[c]{21mm}{\includegraphics[width=18mm]{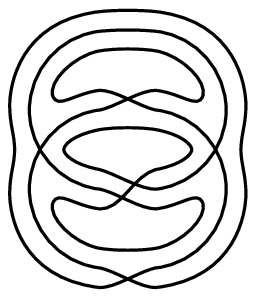}}
\end{center}
\subsection{Special polyhedra with $2$ vertices and $3$ regions}
\label{subsec:Special polyhedra with 2 vertices and 3 regions}
\begin{center}
$2^3_{1}$\parbox[c]{36mm}{\includegraphics[width=32mm]{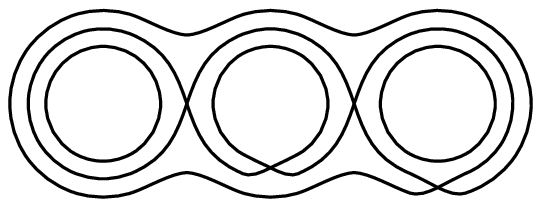}}
$2^3_{2}$\parbox[c]{36mm}{\includegraphics[width=32mm]{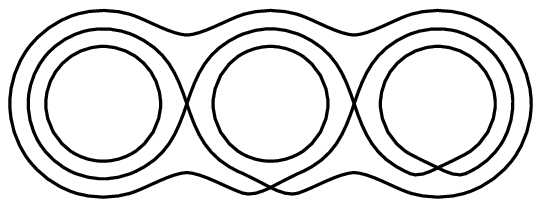}}
$2^3_{3}$\parbox[c]{36mm}{\includegraphics[width=32mm]{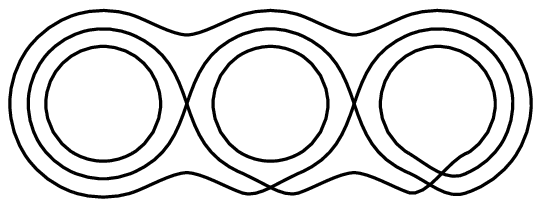}}
$2^3_{4}$\parbox[c]{36mm}{\includegraphics[width=32mm]{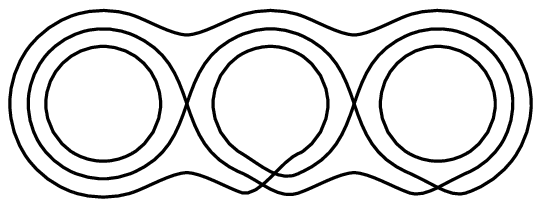}}
$2^3_{5}$\parbox[c]{36mm}{\includegraphics[width=32mm]{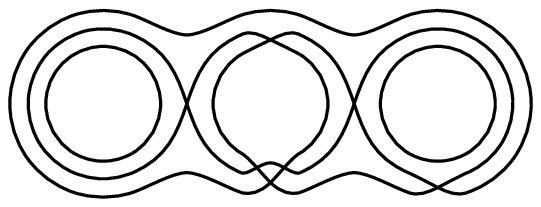}}
$2^3_{6}$\parbox[c]{36mm}{\includegraphics[width=32mm]{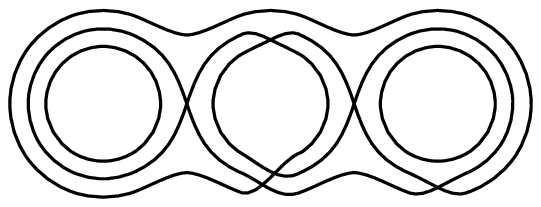}}
$2^3_{7}$\parbox[c]{36mm}{\includegraphics[width=32mm]{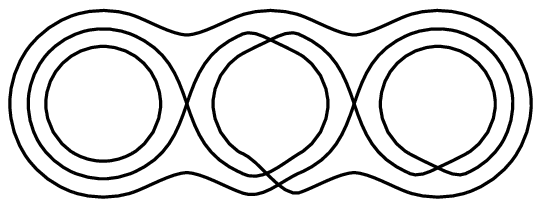}}
$2^3_{8}$\parbox[c]{36mm}{\includegraphics[width=32mm]{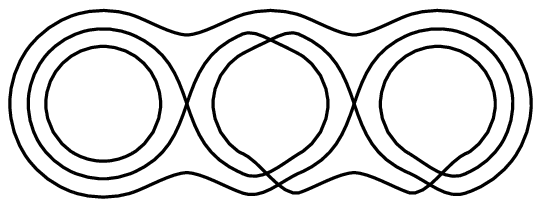}}
$2^3_{9}$\parbox[c]{36mm}{\includegraphics[width=32mm]{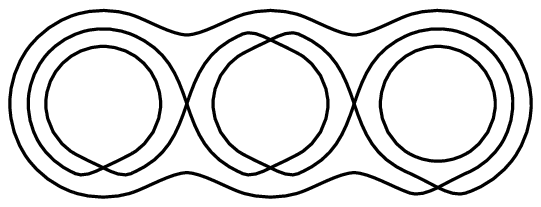}}
$2^3_{10}$\parbox[c]{36mm}{\includegraphics[width=32mm]{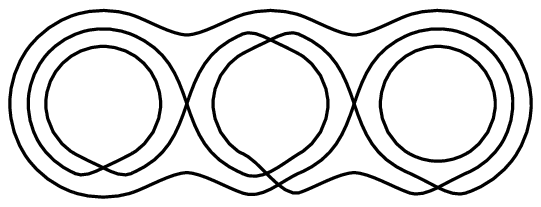}}
$2^3_{11}$\parbox[c]{36mm}{\includegraphics[width=32mm]{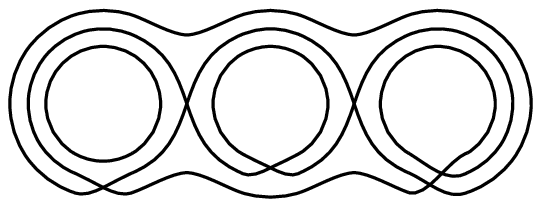}}
$2^3_{12}$\parbox[c]{36mm}{\includegraphics[width=32mm]{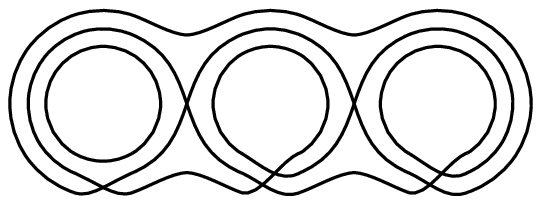}}
$2^3_{13}$\parbox[c]{36mm}{\includegraphics[width=32mm]{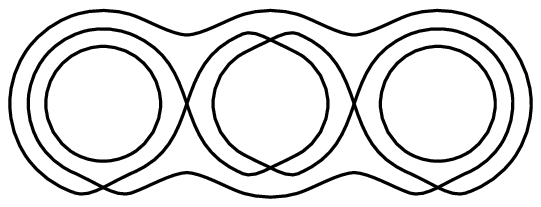}}
$2^3_{14}$\parbox[c]{36mm}{\includegraphics[width=32mm]{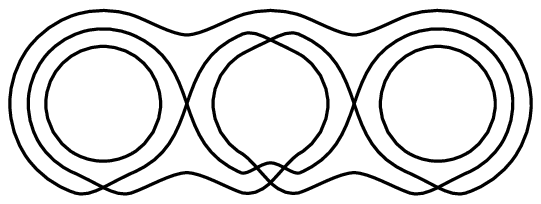}}
$2^3_{15}$\parbox[c]{36mm}{\includegraphics[width=32mm]{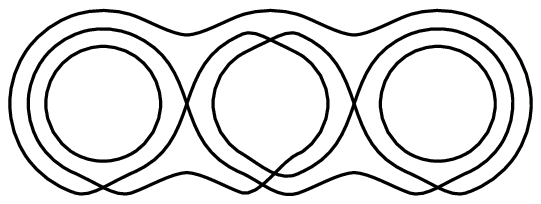}}
$2^3_{16}$\parbox[c]{21mm}{\includegraphics[width=18mm]{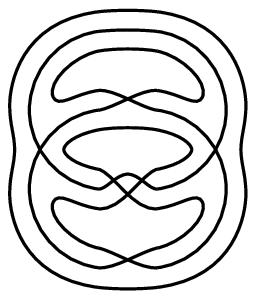}}
$2^3_{17}$\parbox[c]{36mm}{\includegraphics[width=32mm]{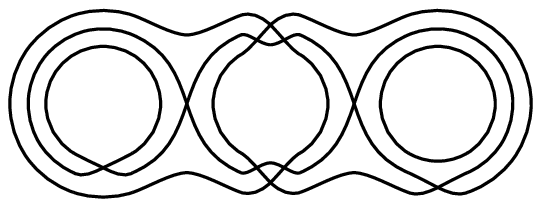}}
$2^3_{18}$\parbox[c]{36mm}{\includegraphics[width=32mm]{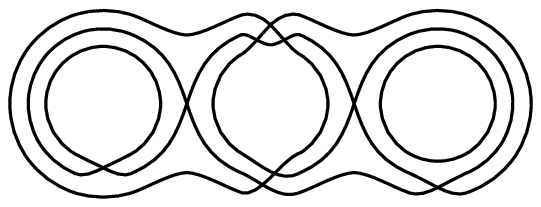}}
$2^3_{19}$\parbox[c]{36mm}{\includegraphics[width=32mm]{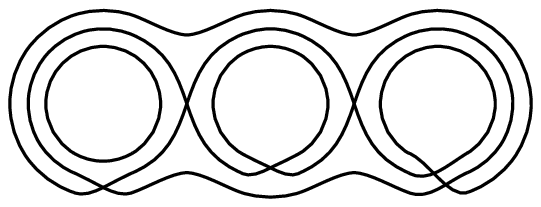}}
$2^3_{20}$\parbox[c]{36mm}{\includegraphics[width=32mm]{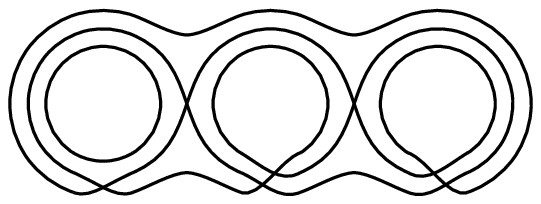}}
$2^3_{21}$\parbox[c]{21mm}{\includegraphics[width=18mm]{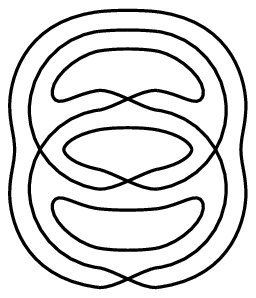}}
$2^3_{22}$\parbox[c]{21mm}{\includegraphics[width=18mm]{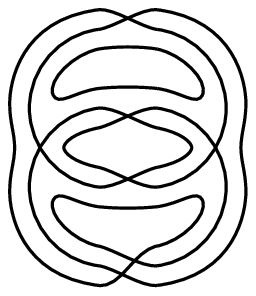}}
$2^3_{23}$\parbox[c]{36mm}{\includegraphics[width=32mm]{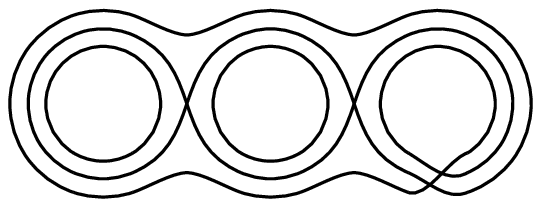}}
$2^3_{24}$\parbox[c]{36mm}{\includegraphics[width=32mm]{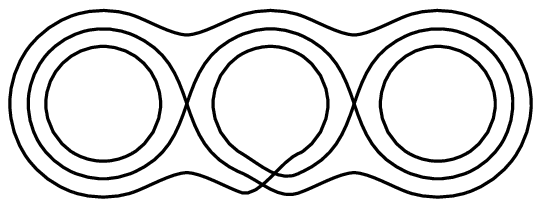}}
$2^3_{25}$\parbox[c]{36mm}{\includegraphics[width=32mm]{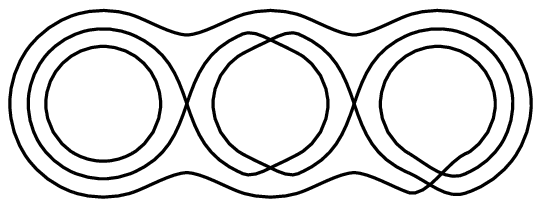}}
$2^3_{26}$\parbox[c]{36mm}{\includegraphics[width=32mm]{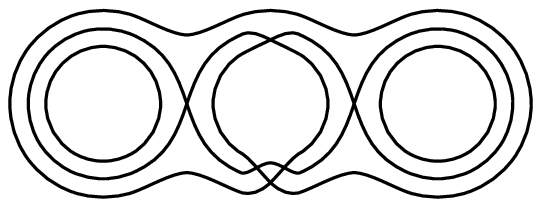}}
$2^3_{27}$\parbox[c]{36mm}{\includegraphics[width=32mm]{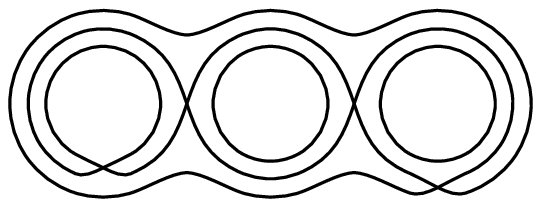}}
$2^3_{28}$\parbox[c]{36mm}{\includegraphics[width=32mm]{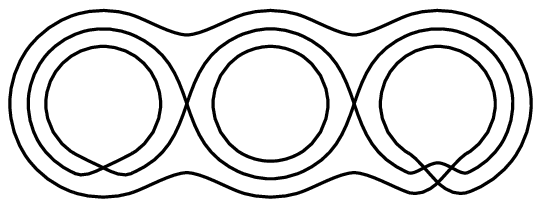}}
$2^3_{29}$\parbox[c]{36mm}{\includegraphics[width=32mm]{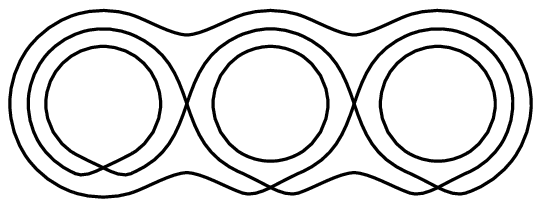}}
$2^3_{30}$\parbox[c]{36mm}{\includegraphics[width=32mm]{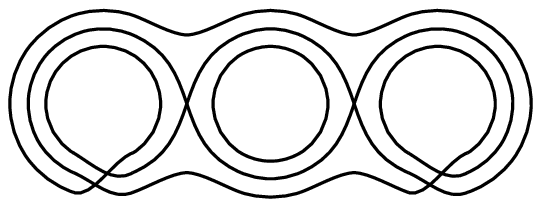}}
$2^3_{31}$\parbox[c]{21mm}{\includegraphics[width=18mm]{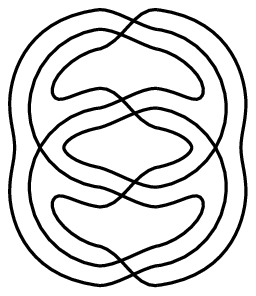}}
$2^3_{32}$\parbox[c]{36mm}{\includegraphics[width=32mm]{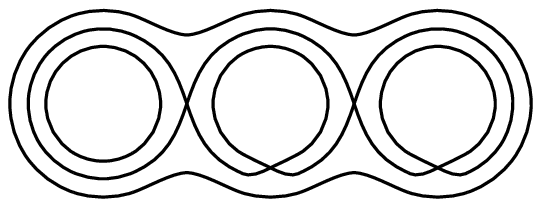}}
$2^3_{33}$\parbox[c]{36mm}{\includegraphics[width=32mm]{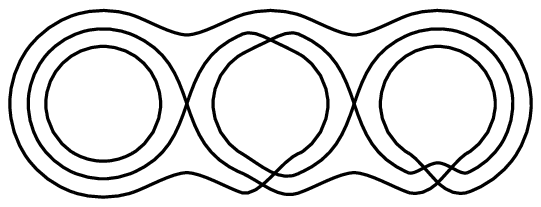}}
$2^3_{34}$\parbox[c]{36mm}{\includegraphics[width=32mm]{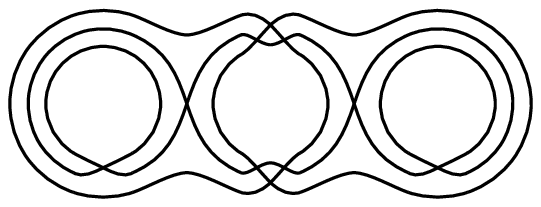}}
$2^3_{35}$\parbox[c]{36mm}{\includegraphics[width=32mm]{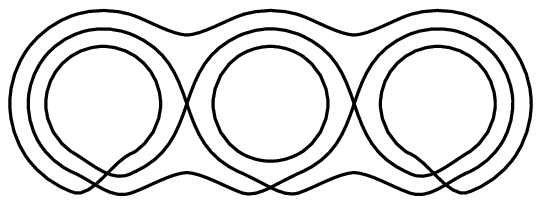}}
$2^3_{36}$\parbox[c]{21mm}{\includegraphics[width=18mm]{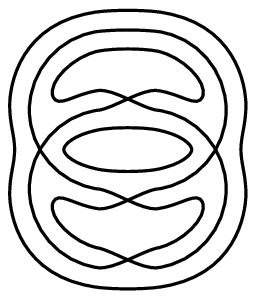}}
$2^3_{37}$\parbox[c]{21mm}{\includegraphics[width=18mm]{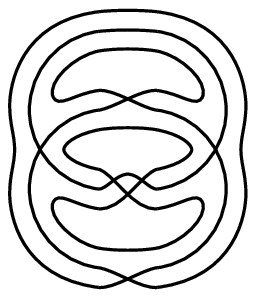}}
$2^3_{38}$\parbox[c]{36mm}{\includegraphics[width=32mm]{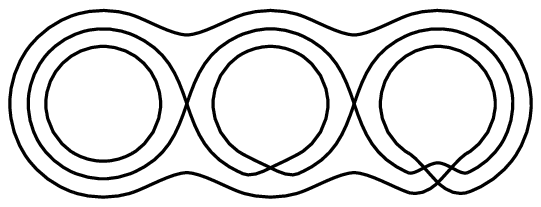}}
$2^3_{39}$\parbox[c]{36mm}{\includegraphics[width=32mm]{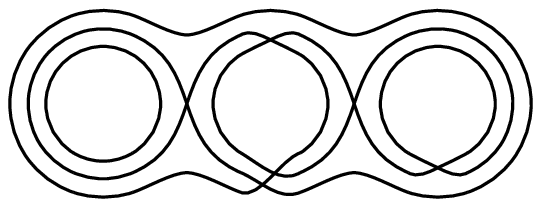}}
$2^3_{40}$\parbox[c]{36mm}{\includegraphics[width=32mm]{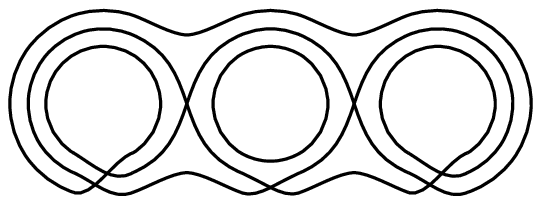}}
$2^3_{41}$\parbox[c]{21mm}{\includegraphics[width=18mm]{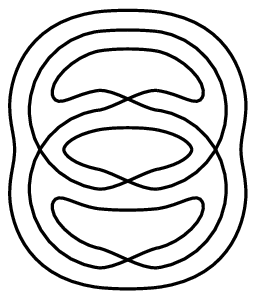}}
$2^3_{42}$\parbox[c]{21mm}{\includegraphics[width=18mm]{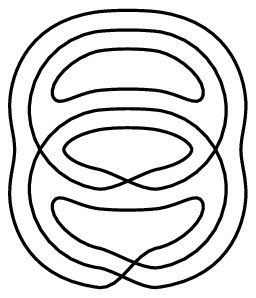}}
$2^3_{43}$\parbox[c]{36mm}{\includegraphics[width=32mm]{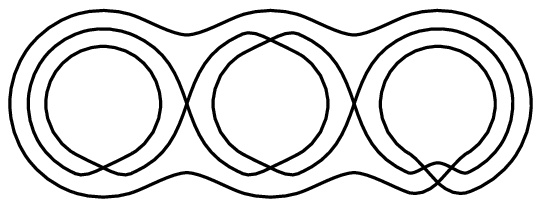}}
$2^3_{44}$\parbox[c]{21mm}{\includegraphics[width=18mm]{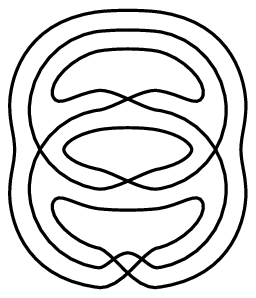}}
$2^3_{45}$\parbox[c]{36mm}{\includegraphics[width=32mm]{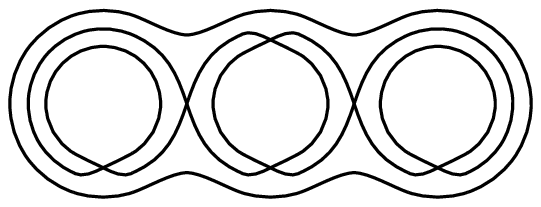}}
$2^3_{46}$\parbox[c]{36mm}{\includegraphics[width=32mm]{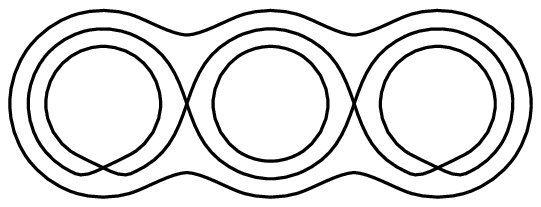}}
$2^3_{47}$\parbox[c]{36mm}{\includegraphics[width=32mm]{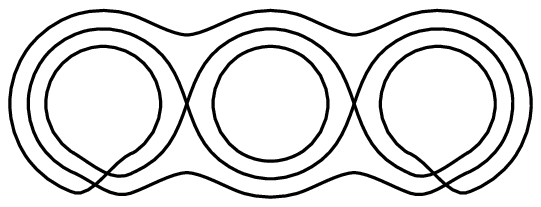}}
$2^3_{48}$\parbox[c]{21mm}{\includegraphics[width=18mm]{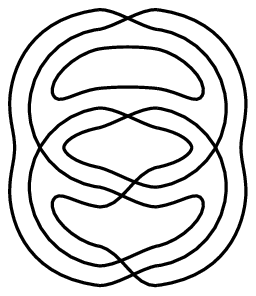}}
\end{center}
\subsection{Special polyhedra with $2$ vertices and $4$ regions}
\begin{center}
$2^4_{1}$\parbox[c]{36mm}{\includegraphics[width=32mm]{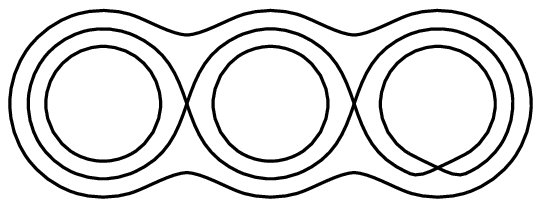}}
$2^4_{2}$\parbox[c]{36mm}{\includegraphics[width=32mm]{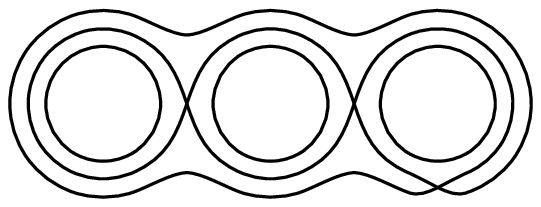}}
$2^4_{3}$\parbox[c]{36mm}{\includegraphics[width=32mm]{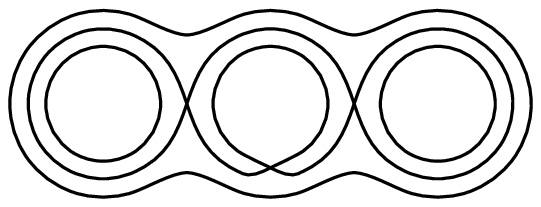}}
$2^4_{4}$\parbox[c]{36mm}{\includegraphics[width=32mm]{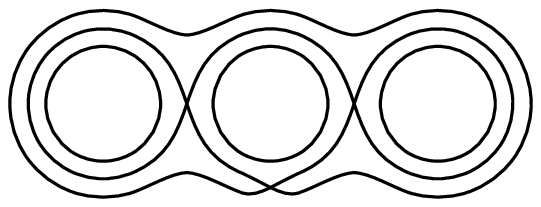}}
$2^4_{5}$\parbox[c]{36mm}{\includegraphics[width=32mm]{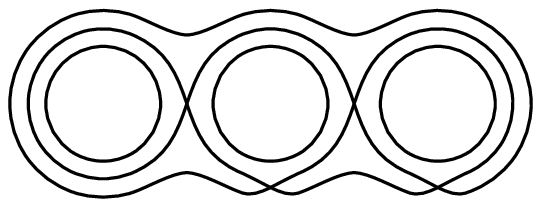}}
$2^4_{6}$\parbox[c]{36mm}{\includegraphics[width=32mm]{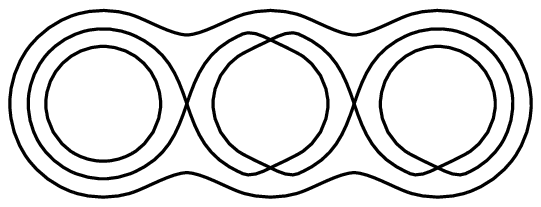}}
$2^4_{7}$\parbox[c]{36mm}{\includegraphics[width=32mm]{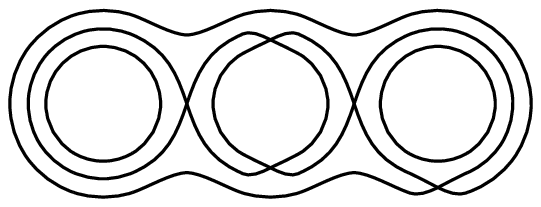}}
$2^4_{8}$\parbox[c]{36mm}{\includegraphics[width=32mm]{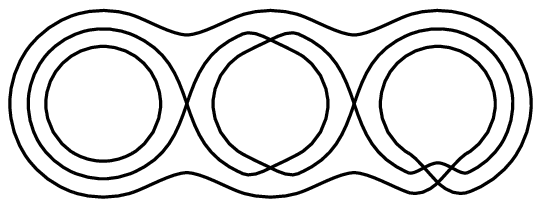}}
$2^4_{9}$\parbox[c]{36mm}{\includegraphics[width=32mm]{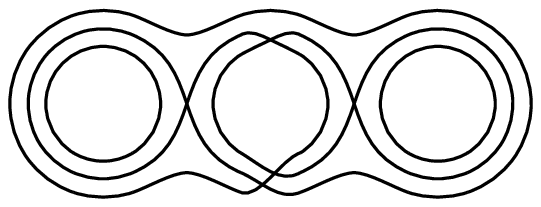}}
$2^4_{10}$\parbox[c]{36mm}{\includegraphics[width=32mm]{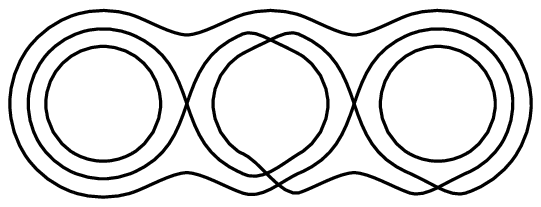}}
$2^4_{11}$\parbox[c]{36mm}{\includegraphics[width=32mm]{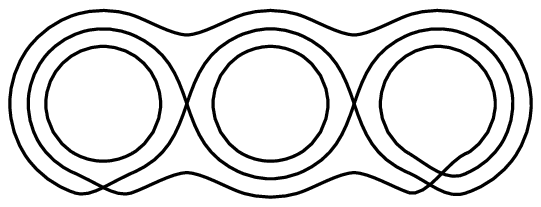}}
$2^4_{12}$\parbox[c]{36mm}{\includegraphics[width=32mm]{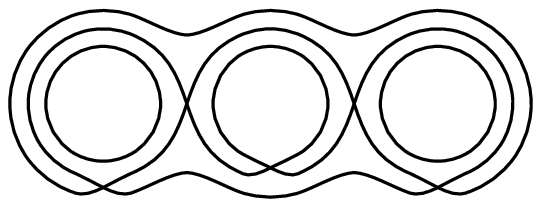}}
$2^4_{13}$\parbox[c]{36mm}{\includegraphics[width=32mm]{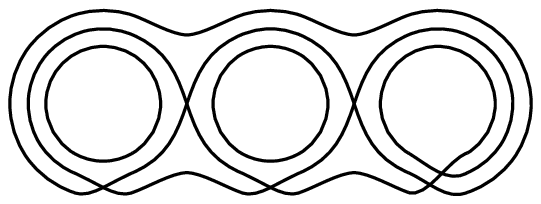}}
$2^4_{14}$\parbox[c]{36mm}{\includegraphics[width=32mm]{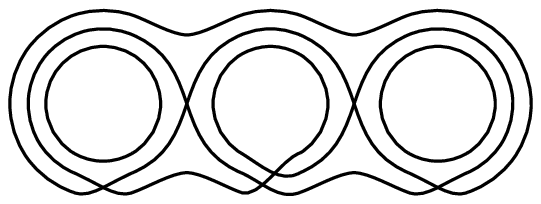}}
$2^4_{15}$\parbox[c]{21mm}{\includegraphics[width=18mm]{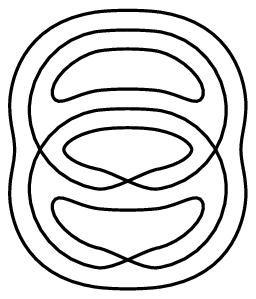}}
$2^4_{16}$\parbox[c]{21mm}{\includegraphics[width=18mm]{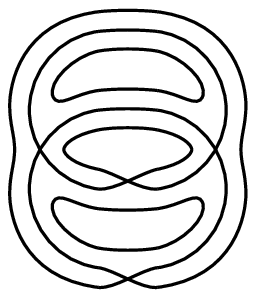}}
$2^4_{17}$\parbox[c]{21mm}{\includegraphics[width=18mm]{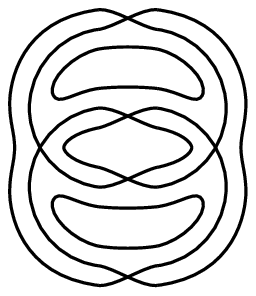}}
$2^4_{18}$\parbox[c]{21mm}{\includegraphics[width=18mm]{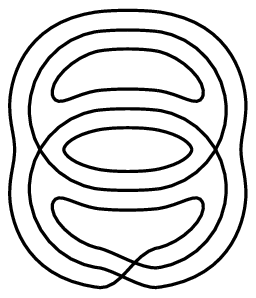}}
\end{center}
\subsection{Special polyhedra with $2$ vertices and $5$ regions}
\begin{center}
$2^5_{1}$\parbox[c]{36mm}{\includegraphics[width=32mm]{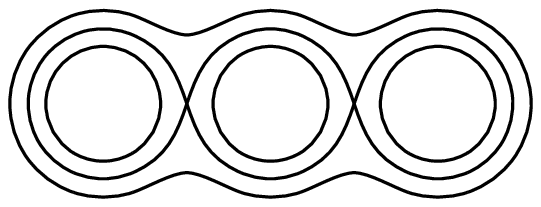}}
$2^5_{2}$\parbox[c]{36mm}{\includegraphics[width=32mm]{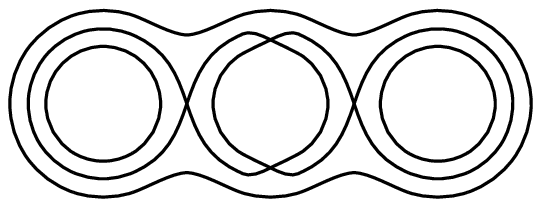}}
$2^5_{3}$\parbox[c]{36mm}{\includegraphics[width=32mm]{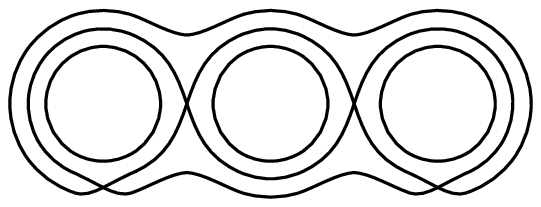}}
$2^5_{4}$\parbox[c]{36mm}{\includegraphics[width=32mm]{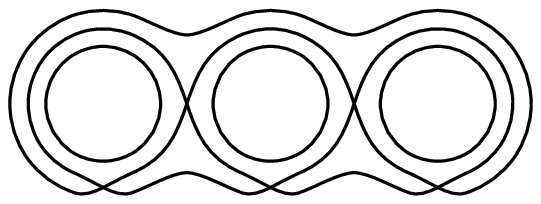}}
$2^5_{5}$\parbox[c]{21mm}{\includegraphics[width=18mm]{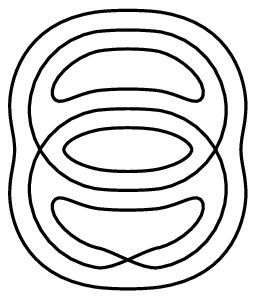}}
\end{center}
\subsection{Special polyhedra with $2$ vertices and $6$ regions}
\begin{center}
$2^6_1$\parbox[c]{21mm}{\includegraphics[width=18mm]{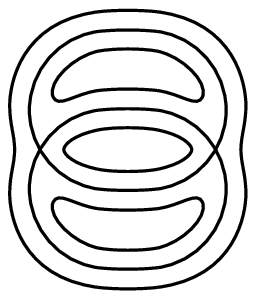}}
\end{center}

\end{document}